\documentclass[11pt,reqno]{amsart}
\usepackage{amsmath, amsfonts, amsthm, amssymb,amscd, graphicx, amscd}
\usepackage{float,epsf}
\usepackage[english]{babel}
\usepackage{enumerate}
\usepackage{tikz}
\usepackage{mathrsfs}
\usepackage[numbers,sort&compress]{natbib}

\usepackage{srcltx}
\usepackage{geometry}
\usepackage{verbatim}
\usepackage{mathrsfs}
\usepackage{hyperref}
\usepackage{enumitem} 
\usepackage{bbm} 
\usepackage{stmaryrd}

\usepackage{relsize}
\usepackage{exscale}

\usepackage{mathtools}

\newtheorem{thm}{Theorem}[section]

\newtheorem{lem}[thm]{Lemma}

\newtheorem{rem}[thm]{Remark}

\numberwithin{equation}{section}

\def\dd{{\rm d}}
\hypersetup{bookmarksdepth=2}

\begin{document}
\title{From the Maxwell-Schr\"odinger equation to the Euler-Maxwell equation in the semiclassical limit}
\author{Immanuel Ben-Porat}
\address{Immanuel Ben Porat, The University of Texas at Austin, Austin, TX 78712, USA.}
\email{immanuel.benporat@austin.utexas.edu}
\begin{abstract}
We derive the pressureless Euler-Maxwell system as a semiclassical limit from the Maxwell-Schr\"odinger equation. The method relies on studying  a quantum modulated energy accounting for the self-consistent magnetic effects. Crucially, a correction is incorporated in the  quantum modulated energy as to enable to close the Gr\"onwall estimate provided a smallness assumption on the initial data is imposed. This is the first result on the semiclassical limit of the Maxwell-Schr\"odinger equation avoiding any regularization or conditional uniform in $\hbar$ bounds on the wave function.  
\end{abstract}
\maketitle
\pagestyle{plain}
\section{Introduction}
\subsection{Background.} One of the most commonly studied models in non-relativistic quantum mechanics is the Hartree equation 
\begin{align}
 \begin{array}{lc}
 \begin{cases}
     i\hbar\partial_{t}\psi_{\hbar}=-\frac{\hbar^{2}}{2}\Delta \psi_{\hbar}+V\ast \left\vert \psi_{\hbar}\right\vert^{2}\psi_{\hbar}   \\
      \psi_{\hbar}(0,\cdot)=\psi_{\hbar}^{\mathrm{in}},& 
\end{cases}
 \end{array}
 \label{Hartree}
\end{align}
describing the evolution of a wave function $\psi_{\hbar}$ via a self-consistent quantum Hamiltonian given by means of an interaction $V:\mathbb{R}^
{3}\rightarrow\mathbb{R}$ and scaled via Planck's constant $0<\hbar\ll 1$. The problem of  identifying the classical dynamics resulting from Hartree's equation as Planck's constant $\hbar$ becomes negligible is a prototypical example of a  \textit{semiclassical limit}. The semiclassical limit is motivated by the fact that the scaling parameter $\hbar$ measures how quantum the system is, and as it becomes small the system is approximately classical. We refer to \cite{susskind2014quantum} for a particularly accessible exposition of the physical background. It turns out that the Vlasov equation is the kinetic PDE that is expected to be derived from \eqref{Hartree}, namely 
\begin{align}
\begin{array}{lc}
\begin{cases}
\partial_{t}f+\xi\cdot \nabla_{x}f-\nabla V\ast \rho_{f}\cdot \nabla_{\xi}f=0 , \ \rho_{f}=\int_{\mathbb{R}^{3}}f(t,x,\xi)\ \dd \xi \\
f(0,\cdot)=f^{\mathrm{in}}.  
\end{cases}
\end{array}
\label{Vlasov}
\end{align}
We do not aim to give an exhaustive account of the literature concerning the rigorous justification of the semiclassical limit leading from \eqref{Hartree} to \eqref{Vlasov} but refer to the foundational works by Lions-Paul \cite{lions1993mesures} and Markowich-Mauser \cite{MarkowichMauser1993}. Other more modern  works on the semiclassical limit include  \cite{golse2019empirical,golse2017schrodinger,lafleche2019propagation,lafleche2023strong, lafleche2021global,Smith2026HartreeVlasov}. We refer to Chapter 3 in \cite{maas2024optimal} for a detailed  review of quantum optimal transport and its applications in semiclassical and quantum many-body limits.  A more realistic (and hence more involved) description of quantum matter includes a quantum Hamiltonian with a self-consistent electromagnetic part, which leads to the Maxwell-Schr\"odinger equation given by
\begin{align}\tag{MS}
 \begin{array}{lc}
 \begin{cases}
      & i\hbar \partial_{t}\psi_{\hbar}=\frac{1}{2}(i\hbar \nabla +A_{\hbar})^{2}\psi_{\hbar}+\varphi_{\hbar}\psi_{\hbar}\\
      &-\Delta \varphi_{\hbar}=\rho_{\hbar} \\
      & (\partial_{tt}-\Delta)A_{\hbar}=\mathbb{P}J_{\hbar}\\  &\mathrm{div}A_{\hbar}=0 
 \end{cases}     
 \end{array} 
 \label{Maxwell Schrodinger eq}
\end{align}
supplemented with initial data 
\begin{align}\tag{MS$^\mathrm{in}$}
(\psi_{\hbar}(0,\cdot),A_{\hbar}(0,\cdot),\partial_{t}A_{\hbar}(0,\cdot))=(\psi^{\mathrm{in}}_{\hbar},A_{0,\hbar}^{\mathrm{in}},A^{\mathrm{in}}_{1,\hbar}),\ \left\Vert \psi^{\mathrm{in}}_{\hbar}\right\Vert_{2}=1,  \ \mathrm{div}(A_{0,\hbar}^{\mathrm{in}})=\mathrm{div}(A^{\mathrm{in}}_{1,\hbar})=0. \label{MS initial data}    
\end{align}
Here we designate by $\rho_{\hbar},J_{\hbar}$ the quantum density and quantum current, respectively,  defined by  
\begin{align}
J_{\hbar}\coloneqq-\Re(\overline{\psi_{\hbar}}(i\hbar \nabla+A_{\hbar})\psi_{\hbar}), \ \rho_{\hbar}\coloneqq\left\vert \psi_{\hbar}\right\vert^{2} \label{quantum density and current def}     
\end{align}
and we designate by $\mathbb{P}$ the Helmholtz-Leray projector defined by
\begin{align}
\mathbb{P}J\coloneqq 
(I-\nabla \Delta^{-1}\mathrm{div})J.\label{Leray projection def}    
\end{align}
Note that $A_{\hbar}:[0,T]\times \mathbb{R}^{3}\rightarrow \mathbb{R}^{3}$ is to be interpreted as a magnetic vector potential. The expected  semiclassical limit in this case is the non-relativistic Vlasov-Maxwell equation 
\begin{align}\tag{VM}
\begin{cases}
\begin{array}{lc}
\partial_{t}f+\xi\cdot \nabla_{x}f+(E+\xi\times B)\cdot \nabla_{\xi}f=0 \  \\
\partial_{t}E-\mathrm{curl}B=-J, \ J_{f}=\int_{\mathbb{R}^{3}}\xi f(t,x,\xi)\ \dd \xi\\
\partial_{t}B+\mathrm{curl}E=0\\
\mathrm{div}E=\rho_{f},\mathrm{div}B=0, \ \rho_{f}=\int_{\mathbb{R}^{3}}f(t,x,\xi)\ \dd \xi.  
\end{array}
\end{cases}
\label{Vlasov Maxwell}
\end{align}
The rigorous derivation leading from \eqref{Maxwell Schrodinger eq} to \eqref{Vlasov Maxwell} in the  limit as $\hbar \rightarrow 0$ is well recognized as an outstanding open problem. However, there have been several intermediate and related results which we now review. The semiclassical limit for a regularized version of \eqref{Maxwell Schrodinger eq} and \eqref{Vlasov Maxwell} (a regularization modeling extended charges) has been proved in \cite{leopold2026derivation}. The regularization has the clear advantage that it compensates for the loss of derivative which is usually a major source of difficulty in \eqref{Vlasov Maxwell}, and results from the fact that $E,B$ are evolve hyperbolically (unlike the electrostatic case demonstrated in \eqref{Hartree}).  In \cite{leopold2020derivation} the regularized Maxwell-Schr\"odinger equation is obtained as a mean-field limit from the Pauli-Fierz Hamiltonian and in \cite{Leopold2024MaxwellSchrodingerQED} the author goes further to derive the regularized Vlasov-Maxwell equation as a combined mean-field semiclassical limit from quantum many-body dynamics. It is also possible to study the semiclassical limit for \textit{monokinetic initial data} in the following manner. Introducing the monokinetic ansatz $f(t,x,\xi)=\rho(t,x)\delta(\xi-u(t,x))$ a direct calculation reveals that $f$ is a solution of \eqref{Vlasov Maxwell} iff $(\rho,u,B,E)$ is a solution of the following \textit{Euler-Maxwell system}: 
\begin{align}\tag{EM}
\begin{cases}
\begin{array}{lc}
\partial_{t}\rho+\mathrm{div}(\rho u)=0\\
\partial_{t}u+uD_{x}u=E+u\times B\\
\partial_{t}E-\mathrm{curl}B=-\rho u\\
\partial_{t}B+\mathrm{curl}E=0\\
\mathrm{div}E=\rho,\ \mathrm{div}(B)=0
\end{array}
\end{cases}
\label{Euler Maxwell}
\end{align}
supplemented with initial data 
\begin{align}\tag{EM$^\mathrm{in}$}
(\rho(0,\cdot),u(0,\cdot),B(0,\cdot),E(0,\cdot))=(\rho^{\mathrm{in}},u^{\mathrm{in}},B^{\mathrm{in}},E^{\mathrm{in}}),\ \rho^{\mathrm{in}}\in \mathcal{P}(\mathbb{R}^{3}),\ \mathrm{div}E^{\mathrm{in}}=\rho^{\mathrm{in}},\ \mathrm{div}B^{\mathrm{in}}=0.    \label{EMI} 
\end{align}
In this sense, \eqref{Euler Maxwell} may be viewed as a special case of \eqref{Vlasov Maxwell} with initial data $\rho^{\mathrm{in}}\delta(\xi-u^{\mathrm{in}})$.  The derivation of the Euler-Maxwell equation with pressure as a semiclassical limit has been studied in \cite{KimMoon2024} via the modulated energy method, which will be central for the present work as well. Nevertheless, the result in \cite{KimMoon2024} is conditional since it assumes a bound uniform in $\hbar$ on $\psi_{\hbar}$, i.e. a bound of the form 
\begin{align}
\underset{0<\hbar\leq 1}{\sup}\underset{t\in [0,T]}{\sup}\left\Vert \psi_{\hbar}(t,\cdot)\right\Vert_{\infty}<\infty. \label{uniform in h assumption}    
\end{align}
 Such bounds are not known at present, and in fact even in the simplified electrostatic settings of the Hartree equation \eqref{Hartree} an estimate of the form \eqref{uniform in h assumption} has been proved in $3D$ only in the case where $V$ is the screened Coulomb interaction and the initial data verifies smallness assumptions \cite{lafleche2021global}. The related problem of propagation of quantum velocity moments of arbitrary order  uniformly in $\hbar$ for \eqref{Hartree} is explicitly classified as an open problem in \cite{iacobelli2024enhanced}. However, the pressureless Euler-Poisson equation has  already been derived (non-conditionally) as a semiclassical limit in the important work by Golse-Paul \cite{golse2022mean}.  A more recent work which is particularly relevant to the problem at stake is \cite{salvi2025semi}, in which the author derives the  relativistic Euler-Maxwell equation from the massive Klein–Gordon–Maxwell system (which can be viewed as a relativistic version of \eqref{Maxwell Schrodinger eq}).   Other related works about the semiclassical limit of quantum dynamics coupled to Maxwell type equations include \cite{MauserMollerYang2025}, where the authors implement WKB methods to derive the Euler-Poisswell equation from the Pauli–Poisswell equation, and \cite{mauser2007convergence}, in which the combined non-relativistic semiclassical limit of the Dirac-Maxwell equation is proved. To finish this discussion, we also mention that it is interesting to study the semiclassical limit when the magnetic field is external rather than coupled -- this line of research was initiated  by Arnold-Steinr\"uck in  \cite{arnold1989electromagnetic}, who employed a Wigner transform approach, and was also undertaken more recently by the author in \cite{ben2024magnetic,porat2026magnetized}, by exploiting machinery from quantum optimal transport and quantum modulated energies.   
\subsection{Main results}
The main purpose of this work is to prove the semiclassical limit leading from \eqref{Maxwell Schrodinger eq} to \eqref{Euler Maxwell} for small initial data. This demonstrates a first step   towards the resolution of the formidable problem mentioned earlier regarding the derivation of \eqref{Vlasov Maxwell}  semiclassically from \eqref{Maxwell Schrodinger eq} -- our main result settles this problem for small monokinetic initial data. To the best of our knowledge, this is the first result regarding the semiclassical limit of the non-relativistic Maxwell-Schr\"odinger equation which does not necessitate any regularization or conditional assumptions of the type \eqref{uniform in h assumption}. Our underlying methodology rests upon identifying a  \textit{corrected quantum modulated energy}, which we now define. Given a solution $(\psi_{\hbar},A_{\hbar},\varphi_{\hbar})$ to \eqref{Maxwell Schrodinger eq} and a solution $(\rho,u,B,E)$ to \eqref{Euler Maxwell} set $E_{\hbar}\coloneqq-\partial_{t}A_{\hbar}-\nabla\varphi_{\hbar}$, $B_{\hbar}\coloneqq \mathrm{curl}A_{\hbar}$ and define the \textit{quantum modulated energy} by    
\begin{align}
\mathcal{H}_{\hbar}(t)\coloneqq \underset{\coloneqq \mathcal{U}_{\hbar}(t)}{\underbrace{\frac{1}{2}\int_{\mathbb{R}^{3}}\left\vert (i\hbar\nabla+A_{\hbar}+u)\psi_{\hbar}\right\vert^{2}\ \dd x}}+\underset{\coloneqq \mathcal{B}_{\hbar}(t)}{\underbrace{\frac{1}{2}\int_{\mathbb{R}^{3}}\left\vert B_{\hbar}-B\right\vert^{2}\ \dd x}} +\underset{\coloneqq \mathcal{E}_{\hbar}(t)}{\underbrace{\frac{1}{2}\int_{\mathbb{R}^{3}}\left\vert E_{\hbar}-E\right\vert^{2}\ \dd x} }\label{def of quantum modualted energy}  
\end{align}
and the \textit{corrected} quantum modulated energy by 
\begin{align}
\mathcal{G}_{\hbar}(t)\coloneqq \mathcal{H}_{\hbar}(t)-\int_{\mathbb{R}^{3}}u\cdot ((E_{\hbar}-E)\times (B_{\hbar}-B))\ \dd x. \label{corrected modulated energy}   \end{align}
Given a solution $\psi_{\hbar}$ to \eqref{Maxwell Schrodinger eq} we set $R_{\hbar}(t)\coloneqq\vert \psi_{\hbar}(t,\cdot)\rangle\vert \langle \psi_{\hbar}(t,\cdot)\vert$.  By direct calculation $R_{\hbar}$ is governed by the following equation: 
\begin{align}
 \begin{array}{lc}
 \begin{cases}
      & i\hbar \partial_{t}R_{\hbar}=\left[\frac{1}{2}(i\hbar \nabla +A_{\hbar})^{2}+\varphi_{\hbar},R_{\hbar}\right]  \\
      &-\Delta \varphi_{\hbar}=\rho_{\hbar}\\
      & (\partial_{tt}-\Delta)A_{\hbar}=\mathbb{P}J_{\hbar},\\  &\mathrm{div}A_{\hbar}=0. 
 \end{cases}     
 \end{array} 
\label{density formulation}
\end{align}
Here $\left[\cdot,\cdot \right]$ designates the commutator defined by $\left[T,S\right]\coloneqq TS-ST$. It will be useful to invoke the density operator  formulation in \eqref{density formulation} in some of the forthcoming calculations.
\begin{rem}
Note that the quantum modulated energy, defined in \eqref{def of quantum modualted energy}, may be equivalently written as 
\begin{align*}
\mathcal{H}_{\hbar}(t)&=\frac{1}{2}\mathrm{tr}\left((i\hbar\nabla+A_{\hbar}+u)^{2}R_{\hbar}\right)\\
&+\frac{1}{2}\int_{\mathbb{R}^{3}}\left\vert B_{\hbar}-B\right\vert^{2}\ \dd x+\frac{1}{2}\int_{\mathbb{R}^{3}}\left\vert E_{\hbar}-E\right\vert^{2}\ \dd x
.  
\end{align*}
In addition, it will be useful to use the duality definition of the quantum current $J_{\hbar}$, i.e. $J_{\hbar}$ is the unique vector-valued Radon measure such that for all $a\in W^{1,\infty}(\mathbb{R}^{3};\mathbb{R}^{3})$ it holds that  
\begin{align*}
\int_{\mathbb{R}^{3}}J_{\hbar}\cdot a\ \dd x=-\frac{1}{2}\mathrm{tr}\left((i\hbar\nabla+A_{\hbar})\vee a R_{\hbar}\right).     
\end{align*}
In the above formula and hereafter we denote by $\vee$ the anti-commutator defined by $$T\vee S\coloneqq TS+ST.$$ 
\end{rem}
Our main theorem is stated below: 
\begin{thm}
Suppose that: 
\begin{itemize}
    \item $\rho^{\mathrm{in}}\in H^{3}(\mathbb{R}^{3})$ and $(u^{\mathrm{in}},E^{\mathrm{in}},B^{\mathrm{in}})\in H^{4}(\mathbb{R}^{3})$ satisfy \eqref{EMI} and assume further $\left\Vert u^{\mathrm{in}}\right\Vert_{\infty}<1$. Let $(\rho,u,B,E)$ be the solution to \eqref{Euler Maxwell} on some time interval $[0,T]$ such that $\left\Vert u\right\Vert_{L^{\infty}([0,T];L^{\infty} (\mathbb{R}^{3}))} <1$ (as guaranteed by Theorem \eqref{local existence of solution}).    
    \item $(\psi^{\mathrm{in}}_{\hbar},A_{0,\hbar}^{\mathrm{in}},A^{\mathrm{in}}_{1,\hbar})\in H^
    {4}(\mathbb{R}^{3})\times H^{4}(\mathbb{R}^{3})\times H^{3}(\mathbb{R}^{3})$ satisfy \eqref{MS initial data} and let  $(\psi_{\hbar},A_{\hbar})$ be the solution of \eqref{Maxwell Schrodinger eq} on $[0,T]$ (as guaranteed by Theorem \ref{Global well posedness of MS}). 
    \item $\mathcal{G}_{\hbar}(t)$ is given by \eqref{corrected modulated energy} and $e_{\hbar}\coloneqq E_{\hbar}-E, b_{\hbar}\coloneqq B_{\hbar}-B$. 
\end{itemize}
Then, it holds that 
\begin{align}
\frac{\dd}{\dd t}\mathcal{G}_{\hbar}(t)&= -\frac{1}{4}\sum_{1\leq j,k\leq 3}\mathrm{tr}\left((i\hbar\partial_{x_{j}}+A^{j}_{\hbar}+u^{j})\vee \left((i\hbar\partial_{x_{k}}+A^{k}_{\hbar}+u^{k})\vee \partial_{x_{k}}u^{j}\right)R_{\hbar}\right) \notag\\
&+\int_{\mathbb{R}^{3}}D_{x}u:\left(e_{\hbar}^{\otimes 2}-\frac{1}{2}\left\vert e_{\hbar}\right\vert^{2}I+b_{\hbar}^{\otimes 2}-\frac{1}{2}\left\vert b_{\hbar}\right\vert^{2}I\right)\ \dd x
-\int_{\mathbb{R}^{3}}\partial_{t}u\cdot (e_{\hbar}\times b_{\hbar})\ \dd x. \label{Time derivative of quantum corrected modulated energy statement} 
\end{align}
Consequently, it holds that 
\begin{align}
\underset{t\in [0,T]}{\sup}\mathcal{H}_{\hbar}(t)\underset{\hbar \rightarrow 0}{\rightarrow} 0\ \mbox{provided}\ \mathcal{H}_{\hbar}(0)\underset{\hbar \rightarrow 0}{\rightarrow}0. \label{vanishing of quantum modulated energy statement}   
\end{align}
Moreover, one has 
\begin{align}
 \underset{t\in [0,T]}{\sup}\left\Vert (E_{\hbar}-E)(t,\cdot)\right\Vert_{2}\underset{\hbar\rightarrow 0}{\rightarrow}0,\ \underset{t\in [0,T]}{\sup}\left\Vert (B_{\hbar}-B)(t,\cdot)\right\Vert_{2}\underset{\hbar\rightarrow 0}{\rightarrow}0 \label{convergence of E and B}
\end{align}
and 
\begin{align}
\underset{t\in [0,T]}{\sup}\left\Vert (\rho_{\hbar}-\rho)(t,\cdot)\right\Vert_{H^{-1}} \underset{\hbar\rightarrow 0}{\rightarrow}0, \ \underset{t\in [0,T]}{\sup}\left\Vert (J_{\hbar}-\rho u)(t,\cdot)\right\Vert_{\mathrm{BL}}\underset{\hbar \rightarrow 0}{\rightarrow}0.  \label{convergence of rho and J}     
\end{align}
\label{main thm}
\end{thm}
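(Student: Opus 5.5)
The plan is to compute $\frac{\dd}{\dd t}\mathcal{H}_{\hbar}$ directly, then differentiate the correction term, and observe that the cross terms cancel to leave \eqref{Time derivative of quantum corrected modulated energy statement}. From that identity a Gr\"onwall argument gives \eqref{vanishing of quantum modulated energy statement}, and the convergences \eqref{convergence of E and B}--\eqref{convergence of rho and J} follow from coercivity and duality.

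\textbf{The energy identity.} For $\mathcal{U}_{\hbar}$ I use the density formulation \eqref{density formulation}. Writing $\Pi_{\hbar}\coloneqq i\hbar\nabla+A_{\hbar}$, we have
\[
\frac{\dd}{\dd t}\tfrac12\mathrm{tr}((\Pi_{\hbar}+u)^{2}R_{\hbar})=\tfrac12\mathrm{tr}\big((\partial_{t}A_{\hbar}+\partial_{t}u)\vee(\Pi_{\hbar}+u)R_{\hbar}\big)+\tfrac{1}{2i\hbar}\mathrm{tr}\big([(\Pi_{\hbar}+u)^{2},\tfrac12\Pi_{\hbar}^{2}+\varphi_{\hbar}]R_{\hbar}\big).
\]
The commutators are then expanded as follows.
\begin{itemize}
\item $[\Pi^{j}_{\hbar},\Pi^{k}_{\hbar}]$ produces the magnetic field $B_{\hbar}$, i.e.\ the Lorentz force $\Pi_{\hbar}\times B_{\hbar}$ in symmetrized form.
\item $[\Pi_{\hbar},\varphi_{\hbar}]$ produces $\nabla\varphi_{\hbar}$.
\item $[u,\Pi^{2}_{\hbar}]$ produces the transport terms $\Pi_{\hbar}\cdot\nabla u$.
\end{itemize}
Next I substitute $\partial_{t}u=-uD_xu+E+u\times B$ from \eqref{Euler Maxwell}, and use $E_{\hbar}=-\partial_tA_{\hbar}-\nabla\varphi_{\hbar}$. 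Regrouping everything around $\Pi_{\hbar}+u$ yields four kinds of terms:
\begin{itemize}
\item the double anticommutator term in \eqref{Time derivative of quantum corrected modulated energy statement};
\item the duality expressions $\int (J_{\hbar}-\rho_{\hbar}u)\cdot e_{\hbar}$;
\item the magnetic terms $\int (J_{\hbar}-\rho_{\hbar} u)\cdot (u\times b_{\hbar})$;
\item a term $\int\rho_{\hbar}(\ldots)$ which should vanish after using $\mathrm{div}E=\rho$ and $\mathrm{div}E_{\hbar}=\rho_{\hbar}$.
\end{itemize}
For $\mathcal{B}_{\hbar}+\mathcal{E}_{\hbar}$, the Maxwell equations satisfied by both pairs give
\[
\frac{\dd}{\dd t}(\mathcal{B}_{\hbar}+\mathcal{E}_{\hbar})=-\int (J_{\hbar}-\rho u)\cdot e_{\hbar}.
\]
Here one needs that $\partial_tE_{\hbar}-\mathrm{curl}B_{\hbar}=-J_{\hbar}$, which follows from the $A_{\hbar}$ equation together with charge conservation, which converts $\mathbb{P}J_{\hbar}$ into $J_{\hbar}$. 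This term cancels the electric cross term above up to $\int\rho_{\hbar}u\cdot e_{\hbar}-\int \rho u\cdot e_{\hbar}=\int u\cdot e_{\hbar}\,\mathrm{div}\,e_{\hbar}$. Integrating by parts, that remainder becomes a stress-tensor term $\int D_xu:(e_{\hbar}^{\otimes2}-\frac12|e_{\hbar}|^2I)$.

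\textbf{The correction and the main obstacle.} The genuinely bad term is the magnetic cross term $\int (J_{\hbar}-\rho_{\hbar}u)\cdot(u\times b_{\hbar})$, which is not controlled by $\mathcal{H}_{\hbar}$ with a Gr\"onwall-type constant. Differentiating $-\int u\cdot(e_{\hbar}\times b_{\hbar})$ and inserting $\partial_te_{\hbar}=\mathrm{curl}\,b_{\hbar}-(J_{\hbar}-\rho u)$ and $\partial_tb_{\hbar}=-\mathrm{curl}\,e_{\hbar}$ produces two contributions:
\begin{itemize}
\item $\int (J_{\hbar}-\rho u)\cdot(b_{\hbar}\times u)$, which cancels the bad magnetic term (again up to an $\int\rho_{\hbar}-\rho$ piece rewritten via $\mathrm{div}\,e_{\hbar}$);
\item curl terms, which after the vector identity $\mathrm{curl}\,b\times b=(b\cdot\nabla)b-\nabla\tfrac{|b|^2}{2}$ and integration by parts become the magnetic stress term $\int D_xu:(b_{\hbar}^{\otimes2}-\frac12|b_{\hbar}|^2 I)$.
\end{itemize}
The remaining piece is $-\int\partial_tu\cdot(e_{\hbar}\times b_{\hbar})$. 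The careful bookkeeping of these cancellations, and justifying the traces for $H^4$ solutions via Theorem \ref{Global well posedness of MS}, is the main obstacle.

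\textbf{Gr\"onwall and convergence.} By \eqref{Time derivative of quantum corrected modulated energy statement}, $\frac{\dd}{\dd t}\mathcal{G}_{\hbar}\le C(\|D_xu\|_\infty+\|\partial_tu\|_\infty)\mathcal{H}_{\hbar}$, with finite constants by Sobolev embedding of the $H^4$ solution. Coercivity comes from $|\int u\cdot(e\times b)|\le\|u\|_\infty\frac12(\|e\|_2^2+\|b\|_2^2)$, which gives
\[
\mathcal{G}_{\hbar}\ge(1-\|u\|_{L^\infty_{t,x}})\mathcal{H}_{\hbar}\quad\text{and}\quad\mathcal{G}_{\hbar}\le 2\mathcal{H}_{\hbar}.
\]
This is exactly where $\|u\|_{L^\infty}<1$ is used. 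Gr\"onwall then gives $\mathcal{H}_{\hbar}(t)\le C_T\mathcal{H}_{\hbar}(0)$, hence \eqref{vanishing of quantum modulated energy statement} and \eqref{convergence of E and B}.

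For $\rho_{\hbar}$, since $\rho_{\hbar}-\rho=\mathrm{div}\,e_{\hbar}$ we get $\|\rho_{\hbar}-\rho\|_{H^{-1}}\le\|e_{\hbar}\|_2$. For $J_{\hbar}$, testing against $a\in W^{1,\infty}$ with $\|a\|_{W^{1,\infty}}\le1$, we have
\[
J_{\hbar}-\rho u=-\Re(\overline{\psi_{\hbar}}(\Pi_{\hbar}+u)\psi_{\hbar})+(\rho_{\hbar}-\rho)u.
\]
The first term is bounded by Cauchy--Schwarz by $\sqrt{2\mathcal{U}_{\hbar}}$. The second is controlled by $\|\rho_{\hbar}-\rho\|_{H^{-1}}\|ua\|_{H^1}$, after a mild truncation argument using $\rho_\hbar,\rho\in\mathcal{P}$, since $a$ need not be in $L^2$.
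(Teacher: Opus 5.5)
Your route is the paper's: differentiate $\mathcal{H}_\hbar$, then the corrector, cancel via the Maxwell equations and the identity $\mathrm{curl}\,v\times v=\mathrm{div}(v^{\otimes 2}-\frac12|v|^2I)-(\mathrm{div}\,v)v$, then use the sandwich bound, Gr\"onwall, and duality for $\rho_\hbar$ and $J_\hbar$. However, two steps fail as you have written them. Both can be repaired.

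\textbf{The electric cross term.} After combining $\frac{\dd}{\dd t}\mathcal{U}_\hbar$ with $\frac{\dd}{\dd t}(\mathcal{E}_\hbar+\mathcal{B}_\hbar)$, the remainder is $-\int(\rho_\hbar-\rho)u\cdot e_\hbar\,\dd x=-\int u\cdot e_\hbar\,\mathrm{div}\,e_\hbar\,\dd x$ (note the sign). Integration by parts alone does not turn this into the stress term. By the identity above,
\[
-\int_{\mathbb{R}^3} u\cdot e_\hbar\,\mathrm{div}\,e_\hbar\,\dd x=\int_{\mathbb{R}^3} D_xu:\Big(e_\hbar^{\otimes 2}-\tfrac12|e_\hbar|^2I\Big)\dd x-\int_{\mathbb{R}^3} u\cdot(e_\hbar\times\mathrm{curl}\,e_\hbar)\,\dd x .
\]
The last term is not controlled by $\mathcal{H}_\hbar$, because $\mathrm{curl}\,e_\hbar=-\partial_t b_\hbar\neq 0$; your claim is true only in the electrostatic (Euler--Poisson) setting. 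This term is cancelled exactly by the contribution $-\int u\cdot(e_\hbar\times\partial_t b_\hbar)=\int u\cdot(e_\hbar\times\mathrm{curl}\,e_\hbar)$ of the derivative of the corrector. Your list omits that contribution: you only convert the $\mathrm{curl}\,b_\hbar\times b_\hbar$ term. So the corrector is needed for the electric cross term as well, not only for the magnetic one; in the paper this is the cancellation of $\mathcal{T}_2$ against part of $\mathcal{T}_5$. Conversely, the magnetic cancellation is exact. The $\rho_\hbar u$ and $\rho u$ pieces vanish pointwise since $u\cdot(u\times b_\hbar)=0$, so there is no $\rho_\hbar-\rho$ leftover to rewrite.

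\textbf{The Gr\"onwall step.} Your bound $\frac{\dd}{\dd t}\mathcal{G}_\hbar\le C(\|D_xu\|_\infty+\|\partial_tu\|_\infty)\mathcal{H}_\hbar$ copies the classical estimate of $\int\rho_1 w^{\otimes 2}:D_xu_2$. The quantum double-anticommutator term, however, is not a quadratic form in $P\psi_\hbar$, where $P^j=i\hbar\partial_{x_j}+A^j_\hbar+u^j$. To write $\langle\psi_\hbar|P^jP^k\partial_{x_k}u^j|\psi_\hbar\rangle$ as $\langle P^j\psi_\hbar|\partial_{x_k}u^jP^k\psi_\hbar\rangle$, you must commute $P^k$ past $\partial_{x_k}u^j$, and $[P^k,\partial_{x_k}u^j]=i\hbar\,\partial_{x_k}^2u^j$. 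The resulting term $\hbar\langle P^j\psi_\hbar|\Delta u^j\psi_\hbar\rangle$ is only bounded by $\mathcal{U}_\hbar+\frac{\hbar^2}{2}\|\Delta u\|_\infty^2$. The correct estimate is therefore $I_1\le c_1\mathcal{U}_\hbar+c_2\hbar^2$, with $c_2$ depending on $\|u\|_{L^\infty W^{2,\infty}}$ (finite since $H^4\subset W^{2,\infty}$). This gives $\sup_t\mathcal{H}_\hbar(t)\le Ce^{CT}(\mathcal{H}_\hbar(0)+\hbar^2T)$, not $C_T\mathcal{H}_\hbar(0)$. That still suffices for the vanishing of $\sup_t\mathcal{H}_\hbar$ and everything downstream.

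Finally, no truncation is needed for $J_\hbar$: $u\cdot a\in H^1$ directly, since $u\in H^1$ and $a\in W^{1,\infty}$.
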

The local well-posedness of \eqref{Euler Maxwell} will be addressed in \S\ref{subsection local wellposedness}. As for the system \eqref{Maxwell Schrodinger eq}, we need global well-posedness in order to make sure that the time interval on which the solution $\psi_{\hbar}$ is defined does not degenerate as $\hbar\rightarrow 0$.  Below we state the global in time well-posedness of \eqref{Maxwell Schrodinger eq}, as proved in \cite{NakamuraWada2007}.
\begin{thm} \textup{(\cite{NakamuraWada2007}, Theorem 1.1 and Theorem 1.2)} \label{Global well posedness of MS}
Suppose that $(\psi^{\mathrm{in}}_{\hbar},A^{\mathrm{in}}_{0,\hbar},A^{\mathrm{in}}_{1,\hbar})\in H^{4}(\mathbb{R}^{3})\times H^{4}(\mathbb{R}^{3})\times H^{3}(\mathbb{R}^{3})$ satisfies \eqref{MS initial data}. Then there exists a unique solution $(\psi_{\hbar},A_{\hbar})\in C(\mathbb{R};H^{4}(\mathbb{R}^{3}))\times C(\mathbb{R};H^{4}(\mathbb{R}^{3}))$ to the Cauchy problem \eqref{Maxwell Schrodinger eq}-\eqref{MS initial data}. Moreover, $\partial_{t}A_{\hbar}\in C(\mathbb{R};H^{3}(\mathbb{R}^{3}))$. 
\end{thm}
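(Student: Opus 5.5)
Since $\hbar>0$ is fixed in this statement, I would first remove it by the rescaling $\psi(t,x)=\psi_{\hbar}(\hbar t,\hbar x)$, $A(t,x)=A_{\hbar}(\hbar t,\hbar x)$ (with a matching rescaling of $\varphi$, adjusting constants). This reduces \eqref{Maxwell Schrodinger eq} to the $\hbar=1$ Maxwell--Schr\"odinger system in Coulomb gauge. The argument then has three steps: local well-posedness in $H^{4}\times H^{4}\times H^{3}$, conservation laws giving global energy-level bounds, and a continuation argument propagating the $H^{4}$ norm.

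\textbf{Local theory.} I would run a Picard/energy iteration on the pair $(\psi,A)$. For a given $A\in C([0,T];H^{4})$ with $\partial_tA\in C([0,T];H^{3})$, solve the linear magnetic Schr\"odinger equation $i\partial_t\psi=\tfrac12(i\nabla+A)^2\psi+\varphi\psi$. Here $\varphi=(-\Delta)^{-1}|\psi|^2$ is treated as a bounded perturbation, since $\|\nabla\varphi\|_{\infty}\lesssim\|\psi\|_{H^{1}}^{2}$. Energy estimates in $H^{4}$ come from applying the covariant derivatives $(i\nabla+A)^{\alpha}$, $|\alpha|\le 4$. The self-adjoint principal part then drops out, and the commutators involve only $B=\mathrm{curl}A$ and its derivatives up to order $3$. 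These are bounded by $\|A\|_{H^{4}}$, so no derivative is lost. Conversely, given $\psi\in C([0,T];H^{4})$, the current $\mathbb{P}J=-\mathbb{P}\Re(\bar\psi(i\nabla+A)\psi)$ lies in $H^{3}$ by the algebra property of $H^{s}$ for $s>3/2$. The wave Duhamel formula then returns $(A,\partial_tA)\in H^{4}\times H^{3}$. A contraction in a weaker norm (say $L^{2}\times H^{1}\times L^{2}$), combined with boundedness in the strong norm, gives existence and uniqueness on a time $T_{*}$. This time depends only on $\|(\psi^{\mathrm{in}},A^{\mathrm{in}}_{0},A^{\mathrm{in}}_{1})\|_{H^{4}\times H^{4}\times H^{3}}$. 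Continuity in time follows by the standard Bona--Smith/Kato argument.

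\textbf{Conservation laws.} For these smooth solutions I would verify conservation of the charge $\|\psi\|_2=1$ and of the energy
\[
\tfrac12\|(i\nabla+A)\psi\|_2^2+\tfrac12\|\nabla\varphi\|_2^2+\tfrac12\|\partial_tA\|_2^2+\tfrac12\|\nabla A\|_2^2 .
\]
Together with the diamagnetic inequality and Sobolev embedding, these give uniform-in-time bounds on $\|\psi\|_{H^{1}}$ (using $\|A\psi\|_2\le\|A\|_{6}\|\psi\|_{3}$), on $\|\nabla A\|_2$ and on $\|\partial_tA\|_2$. The $L^{2}$ norm of $A$ itself grows at most linearly in time.

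\textbf{Propagation of regularity (main obstacle).} To continue the solution to all of $\mathbb{R}$ it suffices to show that the $H^{4}$ norm stays finite on bounded time intervals. The difficulty is that energy control gives $A$ only in $\dot H^{1}$, which does not embed in $L^{\infty}$, so the higher-order energy estimates do not close directly. I would proceed in stages.

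First I would obtain an $H^{2}$ bound for $\psi$. Differentiate the equation in time and estimate $\|\partial_t\psi\|_2$. Then recover $\|\psi\|_{H^{2}}$ from the equation itself, since $\tfrac12(i\nabla+A)^2\psi=i\partial_t\psi-\varphi\psi$ and the magnetic Laplacian is elliptic modulo lower order terms controlled by the energy. The source $\partial_tA\cdot(i\nabla+A)\psi$ in the differentiated equation must be handled by Strichartz estimates. On the wave side, $A$ lies in $L^{q}_tL^{r}_x$ for admissible pairs, using $\mathbb{P}J\in L^{1}_tL^{2}_x$ from the energy bounds. On the Schr\"odinger side, one uses Strichartz/local smoothing estimates for the magnetic Schr\"odinger operator with the rough potential $A$, treating $A\cdot\nabla\psi$ perturbatively on short time slabs whose length depends only on the conserved energy. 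Because the slab length depends only on conserved quantities, iterating gives at most exponential growth of $\|\psi\|_{H^{2}}+\|A\|_{H^{2}}+\|\partial_tA\|_{H^{1}}$.

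Once this $H^{2}$-level bound holds, $\psi$ and $A$ are bounded in $L^{\infty}$. The $H^{3}$ and $H^{4}$ energy estimates from the local theory then become linear in the top-order norm with coefficients controlled by the $H^{2}$ bound. Gr\"onwall gives finiteness of the $H^{4}\times H^{4}\times H^{3}$ norm on every bounded interval, so the continuation argument closes. The $H^{2}$ step is the one I expect to require the real work, namely the Strichartz estimates for the rough magnetic potential as in \cite{NakamuraWada2007}.
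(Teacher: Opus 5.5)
The paper gives no proof of this statement; it imports it from \cite{NakamuraWada2007}, which works at $\hbar=1$. Your rescaling is the right way to justify that transfer, which the paper leaves implicit. With $\tilde\psi(t,x)=\hbar\,\psi_{\hbar}(\hbar t,\hbar x)$, $\tilde A(t,x)=A_{\hbar}(\hbar t,\hbar x)$, $\tilde\varphi(t,x)=\varphi_{\hbar}(\hbar t,\hbar x)$ one gets exactly the $\hbar=1$ system, since the amplitude factor $\hbar$ absorbs the leftover $\hbar^{2}$ in the Poisson and wave equations.

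As a proof, however, your outline has a genuine gap exactly where the content of the theorem lies: the global a priori bound.

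\textbf{The $H^{2}$ step.} Your claim that the energy bounds give $\mathbb{P}J\in L^{1}_{t}L^{2}_{x}$ is false. Charge and energy only give $\psi\in L^{\infty}_{t}H^{1}$, hence $\Im(\bar\psi\nabla\psi)\in L^{\infty}_{t}L^{3/2}_{x}$. An $L^{2}_{x}$ bound needs $\psi\in L^{\infty}_{x}$ or $\nabla\psi\in L^{3}_{x}$, which is above the energy level. Moreover, the differentiated equation needs control of $\partial_{t}A$, not $A$, and energy-level wave Strichartz estimates give nothing for $\partial_tA$ beyond $L^{\infty}_{t}L^{2}_{x}$.

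On the Schr\"odinger side, $A\cdot\nabla\psi$ cannot be absorbed by Strichartz estimates, because they do not recover the lost derivative. Local smoothing does recover it, but only in weighted norms that require spatial decay or smallness of $A$, which $L^{\infty}_{t}\dot H^{1}_{x}$ control does not provide. Strichartz-type bounds for the magnetic flow with rough time-dependent $A$, on slabs whose length depends only on the energy, are a substantial theorem in their own right (the heart of the later energy-space result \cite{BejenaruTataru2009}). They are not a routine step.

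Without such an input, the available estimates close only superlinearly. One has $\frac{\mathrm{d}}{\mathrm{d}t}\|\partial_{t}\psi\|_{2}\lesssim\|\partial_{t}A\|_{L^{6}}\|\nabla\psi\|_{L^{3}}+\dots$, while the $\dot H^{2}\times\dot H^{1}$ wave energy grows at rate $\|J\|_{\dot H^{1}}\lesssim 1+\|\psi\|_{H^{2}}^{3/2}$. For $Z=\|\psi\|_{H^{2}}+\|A\|_{H^{2}}+\|\partial_{t}A\|_{H^{1}}$ this yields only $Z'\lesssim 1+Z^{3/2}$, which does not exclude blow-up. Since you then defer this step to \cite{NakamuraWada2007}, your argument is no more self-contained than the paper's citation.

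\textbf{The persistence step.} The $H^{3}$ and $H^{4}$ energy inequalities are not linear with coefficients controlled by $\|\psi\|_{L^{\infty}}+\|A\|_{L^{\infty}}$. By Kato--Ponce they carry $\|\nabla A\|_{L^{\infty}}+\|\nabla\psi\|_{L^{\infty}}$, and $H^{2}\not\subset W^{1,\infty}$ in three dimensions. For instance, $\frac{\mathrm{d}}{\mathrm{d}t}\|\nabla^{3}\psi\|_{2}^{2}$ contains a multiple of $\int\partial_{j}A_{k}\,\Re(\overline{\partial_{j}\nabla^{2}\psi}\cdot\partial_{k}\nabla^{2}\psi)\,\mathrm{d}x$, in which only the antisymmetric part of $\nabla A$ cancels. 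Interpolation makes these inequalities superlinear again. You therefore need $\int_{0}^{T}(\|\nabla A\|_{\infty}+\|\nabla\psi\|_{\infty})\,\mathrm{d}t<\infty$ or a logarithmic substitute. For $\nabla A$ at the $H^{1}$ level this is exactly the forbidden $L^{2}_{t}L^{\infty}_{x}$ endpoint of the three-dimensional wave Strichartz estimates, so it does not follow from the $H^{2}$ bound.

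\textbf{A smaller slip in the local theory.} For $|\alpha|=4$, the commutator of $(i\nabla+A)^{\alpha}$ with $\frac{1}{2}(i\nabla+A)^{2}$ contains $\partial^{4}B\,\psi$, i.e.\ five derivatives of $A$. It comes from $\sum_{k}\partial_{k}(\partial_{k}A_{j}-\partial_{j}A_{k})=\Delta A_{j}$. This term is not bounded by $\|A\|_{H^{4}}$ termwise; it becomes harmless only after an integration by parts that uses $\mathrm{div}A=0$ and the sum over the contracted index. Ordinary derivatives are simpler here: $A\cdot\nabla$ is skew-adjoint in the Coulomb gauge, and the commutators then involve only $\partial^{\beta}A$ with $1\le|\beta|\le 4$.
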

For additional literature concerning the global well-posedness for the equation \eqref{Maxwell Schrodinger eq} (or variants thereof) we refer the reader to \cite{Tsutsumi1993, BejenaruTataru2009, AntonelliMarcatiScandone2022}. 
\subsection{Strategy and key ideas.} Prior to establishing the semiclassical limit leading from a dispersive to a monokinetic PDE, one has to first gain a proper understanding of the \textit{weak-strong stability} exhibited by the underlying monokinetic PDE. To be  concrete,  given solutions $(\rho_{1},u_{1},E_{1},B_{1})$ and $(\rho_{2},u_{2},E_{2},B_{2})$ to \eqref{Euler Maxwell} define the \textit{modulated energy} by 
\begin{align}
\mathcal{H}(t)&\coloneqq \underset{\coloneqq \mathcal{U}(t)}{\underbrace{\frac{1}{2}\int_{\mathbb{R}^{3}}\rho_{1}\left\vert u_{1}-u_{2}\right\vert^{2}\ \dd x}}+\underset{\coloneqq \mathcal{E}(t)}{\underbrace{\frac{1}{2}\int_{\mathbb{R}^{3}}\left\vert E_{1}-E_{2}\right\vert^{2}\ \dd x}}+\underset{\coloneqq \mathcal{B}(t)}{\underbrace{\frac{1}{2}\int_{\mathbb{R}^{3}}\left\vert B_{1}-B_{2}\right\vert^
{2}\ \dd x}} \label{modulated energy def} 
\end{align}
and the \textit{corrected} modulated energy by 
\begin{align}
\mathcal{G}(t)\coloneqq\mathcal{H}(t)-\int_{\mathbb{R}^{3}}u_{2}\cdot((E_{1}-E_{2})\times (B_{1}-B_{2}))\ \dd x. \label{def of corrected modulated energy}   
\end{align}
Note that $\mathcal{H}(t)$ and $\mathcal{G}(t)$ are the classical analogues of \eqref{def of quantum modualted energy} and \eqref{corrected modulated energy}, respectively. A weak-strong stability principle would typically be an evolution estimate of the type 
\begin{align*}
\frac{\dd}{\dd t}(\mbox{modulated energy})\leq C(\mbox{modulated energy})\ \mbox{with}\ C=C(\left\Vert \rho_{2}\right\Vert_{\infty},\left\Vert u_{2}\right\Vert_{W^{1,\infty}_{t,x}([0,T]\times \mathbb{R}^{3})}). \end{align*}
Crucially, the constant $C$ involves only the regularity of one of the solutions.   A weak-strong stability inequality is well established for the pressureless Euler-Poisson equation -- see appendix A. in \cite{Serfaty2020MeanField}. An attempt to mimic the proof of this weak-strong stability inequality would  include the calculation of the time derivative of the modulated energy \eqref{modulated energy def}. This calculation is performed in Theorem \ref{thm about der of modualted energy} and results in the following formula 
\begin{align}
\frac{\dd}{\dd t}\mathcal{H}(t)=&-\int_{\mathbb{R}^{3}}\rho_{1}(u_{1}-u_{2})^{\otimes 2}:D_{x}u_{2}\ \dd x -\int_{\mathbb{R}^{3}}(\rho_{1}-\rho_{2})u_{2}\cdot (E_{1}-E_{2})\ \dd x \notag\\
&+\int_{\mathbb{R}^{3}}\rho_{1}(u_{1}-u_{2})\cdot (u_{2}\times (B_{1}-B_{2}))\ \dd x.  \label{formula for time der of H intro}   
\end{align}
It is transparent that the first term in the right-hand side of \eqref{formula for time der of H intro} can be bounded by means of $\mathcal{U}(t)$  up to a constant depending only on the regularity of $u_{2}$ -- see \S\ref{weak strong subsection} for details. However, the last term in \eqref{formula for time der of H intro} would require an $L^{\infty}$ bound on $\rho_{1}$, and therefore such a bound would not enable to obtain a weak-strong stability principle. We stress that we wish to avoid at all costs any dependence on the $L^{\infty}$ norm of $\rho_{1}$: in the semiclassical limit $\rho_{1}$ will  play the role of $\rho_{\hbar}$, and we already remarked that it is currently not possible to rule out that $\underset{t\in [0,T]}{\sup}\left\Vert \rho_{\hbar}(t,\cdot)\right\Vert_{\infty} $ blows up as $\hbar\rightarrow 0$. The idea of the corrected modulated energy \eqref{def of corrected modulated energy} is to circumvent this issue. Indeed, once the correction is incorporated within the modulated energy it is possible to deduce a weak-strong inequality for small initial data by observing that $\mathcal{G}$ is bounded from below and above in terms of $\mathcal{H}$ as long as $u_{2}$ remains small (see Lemma \ref{sandwich lemma}). More precisely it holds that 
\begin{align*}
(1-\Lambda)\mathcal{H}(t)\leq \mathcal{G}(t)\leq (1+\Lambda)\mathcal{H}(t)\ \mbox{as long as}\ \left\Vert u_{2}\right\Vert_{\infty}\leq \Lambda \ \mbox{for some}\ \Lambda<1.     
\end{align*}
The plan outlined above enables to prove a weak-strong stability principle for \eqref{Euler Maxwell} under smallness assumptions. Conceptually, this is similar in spirit to the adapted modulated energy implemented in \cite{salvi2025semi}, and a major part of the novelty of the present work is that this idea can be imported to the settings of the Maxwell-Schr\"odinger equation. This component is also the reason the final result is unconditional, in variance with \cite{KimMoon2024}. Having bypassed this issue, the next task is to quantize the weak-strong stability argument, which is where the quantum modulated energies \eqref{def of quantum modualted energy}-\eqref{corrected modulated energy} come into play.

\vspace{0.4 cm}

The paper is organized as follows. In \S\ref{wellposednes section} we prove the local well-posedness of \eqref{Euler Maxwell}. Note that since we are considering the pressureless Euler-Maxwell equation local well-posedness is natural (in variance with \cite{KimMoon2024}, which includes a pressure term leading to a global theory). Afterwards, we prove the weak-strong stability principle by studying the evolution of the corrected modulated energy $\mathcal{G}(t)$. In \S\ref{main thm section} we present a proof of the semiclassical limit leading from \eqref{Maxwell Schrodinger eq} to \eqref{Euler Maxwell} as formulated in Theorem \ref{main thm}.     
\section{Local well-posedness and weak-strong stability for \eqref{Euler Maxwell}}\label{wellposednes section}
\subsection{Local in time existence of classical solutions to \eqref{Euler Maxwell}}\label{subsection local wellposedness} We start by demonstrating local in time existence of classical solutions to \eqref{Euler Maxwell}. The proof differs substantially from the local well-posedness theory of Euler-Poisson due to the fact that $E$ and $B$ are governed dynamically according to a hyperbolic equation. The local existence is proved only for small initial data, which is more restrictive than the local existence for the Euler-Poisson equation. The following theorem goes back to the work of Kato \cite{kato1975cauchy} and has been revised in \cite{taylor1991paradifferential}.  
\begin{thm}\textup{(\cite{taylor1991paradifferential}, Chapter 5)}\label{Kato type thm}
Suppose that: 
\begin{itemize}
    \item[i.] $\Omega\subset \mathbb{R}^{n}$ is open with $0\in \Omega$ and $\mathbf{A}^{0},\dots \mathbf{A}^{d}\in C^{\infty}(\Omega;M_{n\times n}(\mathbb{R}))$, $F\in C^{\infty}(\Omega;\mathbb{R}^{n})$ and $F(0)=0$. 
    \item[ii.] For all $j=0,\dots,d $ it holds that 
    \begin{align}\mathbf{A}^{j}(\zeta)=(\mathbf{A}^{j})^{T}(\zeta) \ \mbox{for all}\ \zeta\in \Omega  \label{symmetry of Aj}
    \end{align}
    and 
    \begin{align}
      \xi^{T}\mathbf{A}^{0}(\zeta)\xi>0 \ \mbox{for all}\ \xi\in \mathbb{R}^{n}\setminus\{0\} \ \mbox{and} \ \zeta\in \Omega.  \label{positivity of A0}
    \end{align}
    \item[iii.] $W^{\mathrm{in}}\in H^{s}(\mathbb{R}^{d};\mathbb{R}^{n})$ for some $s>\frac{d}{2}+1$ and $\overline{\left\{W^{\mathrm{in}}(x)\vert x\in \mathbb{R}^{d}\right\}}\Subset \Omega$.
\end{itemize}
Then, there exist some $T>0$ and a unique solution $W\in C([0,T];H^{s}(\mathbb{R}^{d}))\cap C^{1}([0,T];H^{s-1}(\mathbb{R}^{d}))$ to the Cauchy problem
\begin{align*}
\mathbf{A}^{0}(W)\partial_{t}W+\sum_{j=1}^{d}\mathbf{A}^{j}(W)\partial_{x_{j}}W=F(W), \ W(0,\cdot)=W^{\mathrm{in}}.     
\end{align*}
Moreover, $W(t,x)\in \Omega$ for all $(t,x)\in [0,T]\times \mathbb{R}^{d}$. 
\end{thm}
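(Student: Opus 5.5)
This is the classical Kato--Friedrichs theory for quasilinear symmetric hyperbolic systems, and I would prove it by an iteration scheme controlled by energy estimates in $H^{s}$. Fix a compact set $K\Subset\Omega$ whose interior contains the closure of the range of $W^{\mathrm{in}}$; this makes sense because $s>\frac d2$ gives $W^{\mathrm{in}}$ continuous and vanishing at infinity, and because $0\in\Omega$. On $K$ the coefficients $\mathbf{A}^{j}$, $F$ and all their derivatives are bounded. Since $K$ is compact, \eqref{positivity of A0} upgrades to uniform positivity $\xi^{T}\mathbf{A}^{0}(\zeta)\xi\ge c_{0}|\xi|^{2}$ for $\zeta\in K$.

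I would define iterates by $W_{0}=W^{\mathrm{in}}$ (or a mollification of it) and by letting $W_{k+1}$ solve the \emph{linear} symmetric hyperbolic problem
\begin{align*}
\mathbf{A}^{0}(W_{k})\partial_{t}W_{k+1}+\sum_{j=1}^{d}\mathbf{A}^{j}(W_{k})\partial_{x_{j}}W_{k+1}=F(W_{k}),\qquad W_{k+1}(0)=W^{\mathrm{in}},
\end{align*}
whose solvability follows from the standard linear Friedrichs theory, proved by mollifying the data and coefficients and passing to the limit.

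The heart of the argument is a uniform bound. I would show that there are $T>0$ and $M$ such that $\sup_{[0,T]}\|W_{k}\|_{H^{s}}\le M$ and $W_{k}(t,x)\in K$ for every $k$. For this I use the energy $\langle \mathbf{A}^{0}(W_{k})\Lambda^{s}W_{k+1},\Lambda^{s}W_{k+1}\rangle$ with $\Lambda=(1-\Delta)^{1/2}$. This energy is equivalent to $\|W_{k+1}\|_{H^{s}}^{2}$ by the uniform positivity of $\mathbf{A}^{0}$. Differentiating it in time, two ingredients close the estimate. First, the symmetry \eqref{symmetry of Aj} lets one integrate by parts, so the top-order terms cost only $\|\nabla_{t,x}\mathbf{A}^{j}(W_{k})\|_{\infty}$, which $H^{s}$ controls because $s>\frac d2+1$. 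Second, the Kato--Ponce commutator estimate
\begin{align*}
\|[\Lambda^{s},G]\partial_{x}v\|_{2}\lesssim\|\nabla G\|_{\infty}\|v\|_{H^{s}}+\|G\|_{H^{s}}\|\partial_{x}v\|_{\infty},
\end{align*}
together with Moser estimates for $\|\mathbf{A}^{j}(W_{k})-\mathbf{A}^{j}(0)\|_{H^{s}}$ and $\|F(W_{k})\|_{H^{s}}$, handles the lower-order terms. The hypothesis $F(0)=0$ is exactly what puts $F(W)$ in $H^{s}$. Gr\"onwall's inequality then gives $\|W_{k+1}(t)\|_{H^{s}}^{2}\le C(M)\bigl(\|W^{\mathrm{in}}\|_{H^{s}}^{2}+t\bigr)e^{C(M)t}$. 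Choosing $M$ large and then $T$ small propagates the bound $M$ through the induction. Staying in $K$ also propagates: $\|W_{k+1}(t)-W^{\mathrm{in}}\|_{\infty}\lesssim t\sup\|\partial_{t}W_{k+1}\|_{H^{s-1}}\lesssim tC(M)$, using $H^{s-1}\subset L^{\infty}$.

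Convergence comes from a contraction in a low norm. The difference $W_{k+1}-W_{k}$ solves a linear symmetric system whose source is $(\mathbf{A}^{j}(W_{k})-\mathbf{A}^{j}(W_{k-1}))\partial_{x_j}W_{k}$ plus analogous terms. The $L^{2}$ energy estimate gives $\sup_{[0,T]}\|W_{k+1}-W_{k}\|_{2}\le \tfrac12\sup_{[0,T]}\|W_{k}-W_{k-1}\|_{2}$ once $T$ is shrunk further. So $W_{k}\to W$ in $C([0,T];L^{2})$. Interpolating with the uniform $H^{s}$ bound gives convergence in $C([0,T];H^{s'})$ for every $s'<s$, which suffices to pass to the limit in the equation. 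It also shows $W\in L^{\infty}([0,T];H^{s})$ is weakly continuous in $H^{s}$, with values in $K\subset\Omega$. Uniqueness follows from the same $L^{2}$ difference estimate applied to two solutions.

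The step I expect to be the main obstacle is upgrading weak continuity to $W\in C([0,T];H^{s})$. My plan is Kato's argument. The weighted energy $t\mapsto\langle \mathbf{A}^{0}(W)\Lambda^{s}W,\Lambda^{s}W\rangle$ is shown to be continuous, by running the energy identity on the mollified solutions $J_{\varepsilon}W$ and using Friedrichs' lemma that $[J_{\varepsilon},G]\partial_{x}$ is uniformly bounded and tends strongly to zero. Continuity of the norm together with weak continuity gives strong continuity, with backward-in-time reversibility covering left limits. Finally $\partial_{t}W\in C([0,T];H^{s-1})$ is read off from the equation, since $\partial_t W=-\mathbf{A}^{0}(W)^{-1}\bigl(\sum_{j}\mathbf{A}^{j}(W)\partial_{x_j}W-F(W)\bigr)$ and $H^{s-1}$ is an algebra stable under smooth composition.
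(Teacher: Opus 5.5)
The paper does not prove this statement. It is quoted from Taylor (Chapter 5), going back to Kato, and used only as a black box in the proof of Theorem~\ref{local existence of solution}. Your outline is the classical Kato--Majda proof, and it is correct.

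Your route differs from the cited source mainly in how the approximate solutions are produced:
\begin{itemize}
\item You iterate \emph{linear} symmetric hyperbolic problems. This needs the linear $H^{s}$ theory with merely $H^{s}$ coefficients as an input. In return, a single $L^{2}$ contraction-type difference estimate gives you both convergence and uniqueness.
\item Taylor mollifies the nonlinear equation itself, so each approximation solves an ODE in $H^{s}$ and its existence is just Picard--Lindel\"of. All the work then goes into $\varepsilon$-uniform $H^{s}$ bounds and the passage to the limit.
\end{itemize}
The core ingredients are shared by both routes: the $\mathbf{A}^{0}$-weighted $H^{s}$ energy, the Kato--Ponce and Moser estimates (supplied by the paradifferential calculus of the cited source), and the upgrade from weak to strong continuity through an equi-Lipschitz mollified energy.

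Two small points to tighten.

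First, as written, the bound
$\|W_{k+1}(t)\|_{H^{s}}^{2}\le C(M)\bigl(\|W^{\mathrm{in}}\|_{H^{s}}^{2}+t\bigr)e^{C(M)t}$
makes the choice of $M$ circular: you cannot force $C(M)\|W^{\mathrm{in}}\|_{H^{s}}^{2}\le M^{2}$ by enlarging $M$. The constants comparing the weighted energy with $\|\cdot\|_{H^{s}}^{2}$ depend only on $K$, namely $c_{0}$ and $C_{K}\coloneqq\sup_{K}|\mathbf{A}^{0}|$. So the estimate should read
$\|W_{k+1}(t)\|_{H^{s}}^{2}\le c_{0}^{-1}\bigl(C_{K}\|W^{\mathrm{in}}\|_{H^{s}}^{2}+C(M)t\bigr)e^{C(M)t}$.
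Then fix, e.g., $M^{2}=2c_{0}^{-1}C_{K}\|W^{\mathrm{in}}\|_{H^{s}}^{2}+1$ and take $T$ small depending on $M$.

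Second, the $\varepsilon$-uniform bound on $\frac{\dd}{\dd t}\langle \mathbf{A}^{0}(W)J_{\varepsilon}\Lambda^{s}W,J_{\varepsilon}\Lambda^{s}W\rangle$ already makes the limiting energy Lipschitz on all of $[0,T]$, so time reversibility is not needed. What you do need, since this norm varies with $t$, is that $\mathbf{A}^{0}(W(t))$ is continuous in $t$ with values in $L^{\infty}$. This follows from $W\in C([0,T];H^{s'})$ with $s'>\frac{d}{2}$, and it is what lets you pass from continuity of the weighted norm plus weak continuity to strong continuity in $H^{s}$.
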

\begin{thm}\label{local existence of solution}
Suppose that $\rho^{\mathrm{in}}\in H^{s-1}(\mathbb{R}^{3})$ and
$(u^{\mathrm{in}},E^{\mathrm{in}},B^
{\mathrm{in}})\in H^{s}(\mathbb{R}^{3})$ satisfy \eqref{EMI} for some $s>\frac{5}{2}$ and suppose that $\left\Vert u^{\mathrm{in}}\right\Vert_{\infty}<1 $.  Then, there exist some $T>0$ and a solution $(\rho,u,E,B)$  to \eqref{Euler Maxwell} with initial data 
$(\rho^{\mathrm{in}},u^{\mathrm{in }},E^{\mathrm{in}},B^{\mathrm{in}})$. Moreover, it holds that $\rho\in C([0,T];H^{s-1}(\mathbb{R}^{3}))$ and  $(u,E,B)\in C([0,T];H^{s}(\mathbb{R}^{3}))$ with the bound $\left\Vert u\right\Vert_{L^{\infty}([0,T];L^{\infty}(\mathbb{R}^{3}))} <1$. 
\end{thm}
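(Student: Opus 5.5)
We will recast \eqref{Euler Maxwell} as a symmetrizable hyperbolic system in the unknown $W=(\rho,u,E,B)$ and then apply Theorem \ref{Kato type thm} with $d=3$, $n=10$. The constraints $\mathrm{div}E=\rho$ and $\mathrm{div}B=0$ are not part of the evolution. We first solve the evolution equations
\begin{align*}
\partial_{t}\rho+u\cdot\nabla\rho+\rho\,\mathrm{div}u=0,\quad \partial_{t}u+(u\cdot\nabla)u=E+u\times B,\quad \partial_{t}E-\mathrm{curl}B=-\rho u,\quad \partial_{t}B+\mathrm{curl}E=0,
\end{align*}
and afterwards check that the constraints propagate. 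Taking divergences gives $\partial_{t}(\mathrm{div}E-\rho)=-\mathrm{div}(\rho u)+\mathrm{div}(\rho u)=0$ and $\partial_{t}\mathrm{div}B=0$, so \eqref{EMI} is preserved.

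The main obstacle is that the $(\rho,u)$ block, the pressureless Euler part, is only weakly hyperbolic: the coupling term $\rho\,\mathrm{div}u$ in the continuity equation has no symmetric counterpart in the momentum equation. This is also why $\rho$ carries one derivative less ($H^{s-1}$). We handle it with a two-step procedure.

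\emph{Step 1.} The subsystem for $(u,E,B)$ is symmetric hyperbolic once $\rho u$ is treated as a source. The matrices $\mathbf{A}^{j}$ are $u^{j}I_{3}$ on the $u$ block and the constant symmetric curl matrices on the $(E,B)$ block, and $\mathbf{A}^{0}=I$. The nonlinear terms $E+u\times B$ and $-\rho u$ are lower order. The difficulty is that $\rho$ enters as a coefficient. We therefore treat $\rho$ through its transport equation along $u$, which loses no derivatives for $\rho$ in $H^{s-1}$ when $u\in H^{s}$: the term $\rho\,\mathrm{div}u$ lies in $H^{s-1}$ since $s-1>3/2$ makes $H^{s-1}$ an algebra.

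\emph{Step 2.} We close the argument by an iteration scheme. Given an iterate $u^{(k)}$, we solve the linear transport equation for $\rho^{(k+1)}$ in $H^{s-1}$. We then solve the quasilinear system for $(u,E,B)$ with source $-\rho^{(k+1)}u$ in $H^{s}$, using Theorem \ref{Kato type thm}, or rather its standard energy estimates with a time-dependent source in $C([0,T];H^{s-1})$; this suffices because the source enters only undifferentiated in the $E$ equation.

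The energy estimates are the usual $H^{s}$ commutator (Kato–Ponce) estimates. They give uniform bounds on a short time interval $T$ depending on the norms of the data, and a contraction in a lower norm ($L^{2}$ for $(u,E,B)$ and $H^{-1}$ or $L^{2}$ for $\rho$). Passing to the limit and interpolating gives $\rho\in C([0,T];H^{s-1})$ and $(u,E,B)\in C([0,T];H^{s})$. Uniqueness follows from the same difference estimate.

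An alternative to Steps 1–2 is to put all of $W$ into Theorem \ref{Kato type thm} with a nondegenerate symmetrizer, but we expect the coupling $\rho\,\mathrm{div}u$ to prevent this. The iteration above is the route we would try first.

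For the final bound, we take $\Omega=\{|u|<1\}$ in the $u$ variables; the region in $\rho,E,B$ is irrelevant. Since $\overline{u^{\mathrm{in}}(\mathbb{R}^{3})}\Subset\{|u|<1\}$ by $\|u^{\mathrm{in}}\|_{\infty}<1$, the conclusion $W(t,x)\in\Omega$ of Theorem \ref{Kato type thm} gives $|u|<1$ pointwise. More robustly, continuity of $u$ in $C([0,T];H^{s})\subset C([0,T];L^{\infty})$ (Sobolev embedding with $s>3/2$) allows us to shrink $T$ so that $\|u\|_{L^{\infty}([0,T];L^{\infty})}<1$.
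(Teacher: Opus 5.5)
Your Step 2 has a genuine gap at the claim that a source in $C([0,T];H^{s-1})$ ``suffices because the source enters only undifferentiated in the $E$ equation.''

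\textbf{Where it fails.} Maxwell's equations have no smoothing. With $\Lambda^{s}=(1-\Delta)^{s/2}$, the $H^{s}$ energy identity for $(E,B)$ reduces to $\frac{1}{2}\frac{\dd}{\dd t}(\Vert\Lambda^{s}E\Vert_{2}^{2}+\Vert\Lambda^{s}B\Vert_{2}^{2})=-\langle\Lambda^{s}E,\Lambda^{s}(\rho u)\rangle$. Closing this needs $\rho u\in H^{s}$, i.e.\ $\rho\in H^{s}$, but your transport step only gives $\rho\in H^{s-1}$. In general you then get only $E\in H^{s-1}$. Fed into $\partial_{t}u+(u\cdot\nabla)u=E+u\times B$, this gives only $u\in H^{s-1}$, and the continuity equation then puts $\rho$ in $H^{s-2}$. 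So the scheme loses one derivative per cycle and the a priori estimates never close.

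\textbf{Why.} The term $-\rho u$ is not lower order. On solutions $\rho=\mathrm{div}E$, so $-\rho u=-u\,\mathrm{div}E$ is first order in $E$ and belongs to the principal part. A symptom is that the hypothesis $\Vert u^{\mathrm{in}}\Vert_{\infty}<1$ plays no role in your argument; you only propagate it by continuity. Yet it is structurally essential. The charge mode (the longitudinal part of $E$) travels with speed $u\cdot\xi$. For $|u|>1$ this can coincide with the light speed $\pm|\xi|$, and there the principal symbol ceases to be diagonalizable.

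\textbf{The missing idea.} The paper eliminates $\rho$ through Gauss's law and solves for $W=(u,E,B)$ alone:
$\partial_{t}u+uD_{x}u=E+u\times B$, $\partial_{t}E-\mathrm{curl}B+u\,\mathrm{div}E=0$, $\partial_{t}B+\mathrm{curl}E+u\,\mathrm{div}B=0$.
The term $u\,\mathrm{div}B$ is added only for symmetry. It vanishes a posteriori because $\mathrm{div}B$ solves a transport equation with zero data. One then sets $\rho\coloneqq\mathrm{div}E\in C([0,T];H^{s-1})$, which automatically satisfies the continuity equation.

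This system is not symmetric, but it is symmetrized by the paper's matrix $\mathbf{S}(u)$. Its quadratic form $|v|^{2}+|E|^{2}+|B|^{2}-2u\cdot(E\times B)\geq|v|^{2}+(1-|u|)(|E|^{2}+|B|^{2})$ is positive definite exactly when $|u|<1$. This is precisely where the smallness of $u^{\mathrm{in}}$ enters. Theorem \ref{Kato type thm} then applies directly in $H^{s}$, $s>5/2$, on $\Omega=\{|u|<1-\delta/2\}$, where $\Vert u^{\mathrm{in}}\Vert_{\infty}\leq 1-\delta$. The bound $|u|<1$ then follows from the conclusion $W\in\Omega$. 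Your $H^{s-1}$ transport estimate for $\rho$ is correct, but it becomes unnecessary.
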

\begin{proof}
\textbf{Step 1. Reduction of the system}. We claim that existence of a solution to \eqref{Euler Maxwell}-\eqref{EMI} would follow from existence of a solution to the system 
\begin{align}
\begin{cases}
 \begin{array}{lc}
        \partial_{t}u+uD_{x}u=E+u\times B\\
       \partial_{t}E-\mathrm{curl}B+u\mathrm{div}E=0\\
       \partial_{t}B+\mathrm{curl}E+u\mathrm{div}B=0\\
       (u(0,\cdot),E(0,\cdot),B(0,\cdot))=(u^{\mathrm{in}},E^{\mathrm{in}},B^{\mathrm{in}}).  
 \end{array}
 \end{cases}
\label{reduced Euler Maxwell}
\end{align}
To see this, suppose $(u,E,B)\in C([0,T];H^{s}(\mathbb{R}^{3}))$ is a solution to \eqref{reduced Euler Maxwell} and set $\rho=\mathrm{div}E$ (so that $\rho(0,\cdot)=\mathrm{div}E^{\mathrm{in}}=\rho^{\mathrm{in}}$). Then,
by taking the divergence in the equation for $B$ and using that $\mathrm{div}(\mathrm{curl})\equiv 0$ we get 
\begin{align*}
\partial_{t}\mathrm{div}B+\mathrm{div}(u\mathrm{div}B)=0,\ \mathrm{div}B(0,\cdot )=0.       
\end{align*}
Thus $\mathrm{div}B$ satisfies a linear transport equation in divergence form with $0$ initial data, and therefore by uniqueness it must be that $\mathrm{div}B\equiv 0$. 
Moreover, setting $\rho=\mathrm{div}E$ the equation for $E$ writes 
\begin{align*}
\partial_{t}E-\mathrm{curl}B=-\rho u.    
\end{align*}
Taking the divergence of the last equation we get 
\begin{align*}
\partial_{t}\rho+\mathrm{div}(\rho u)=0,  \end{align*}
so that we eventually arrive at 
\begin{align}
\begin{cases}
\begin{array}{lc}
\partial_{t}\rho+\mathrm{div}(\rho u)=0\\
\partial_{t}u+uD_{x}u=E+u\times B\\
\partial_{t}E-\mathrm{curl}B=-\rho u\\
\partial_{t}B+\mathrm{curl}E=0\\
\mathrm{div}E=\rho,\ \mathrm{div}(B)=0,
\end{array}
\end{cases}
\end{align}
which is \eqref{Euler Maxwell}. Note also that since $u\in C([0,T];H^{s}(\mathbb{R}^{3}))$ we also have $\rho\in C([0,T];H^{s-1}(\mathbb{R}^{3}))$. 
\textbf{Step 2. The system \eqref{reduced Euler Maxwell} in vector form.} Set $W\coloneqq(u,E,B)$ and $$F(u,E,B)\coloneqq \left(\begin{array}{c}
E+u\times B\\
0\\
0
\end{array}\right). $$
Given a vector $(a^{1},a^{2},a^{3})$ denote 
\begin{align*}
\mathbf{C}(a)\coloneqq \begin{pmatrix}0 & -a^{3} & a^{2}\\
a^{3} & 0 & -a^{1}\\
-a^{2} & a^{1} & 0
\end{pmatrix}    
\end{align*}
and denote by $\mathbf{E}_{j}(a)$ the $3\times 3$ matrix whose $j$-th column is $a$ and $0$ at all other entries. 
Note that $a\times b=\mathbf{C}(a)b$. Finally, 
we designate by  $e_{j}$ the $j$-th unit vector and by $\mathbf{B}^{j}:\mathbb{R}^{9}\rightarrow M_{9\times 9}(\mathbb{R})$ ($j=1,2,3$) the matrix given by 
\begin{align*}
\mathbf{B}^{j}(u,E,B)\coloneqq \begin{pmatrix}u^{j}I & 0 & 0\\
0 & \mathbf{E}_{j}(u) & -\mathbf{C}(e_{j})\\
0 & \mathbf{C}(e_{j}) & \mathbf{E}_{j}(u)
\end{pmatrix}.    \end{align*}
With this notation the system \eqref{reduced Euler Maxwell} writes 
\begin{align}
\partial_{t}W+\sum_{j=1}^{3}\mathbf{B}^{j}\partial_{x_{j}}W=F(W), \ W(0,\cdot)=W^{\mathrm{in}}. \label{vector form system}      
\end{align}
\textbf{Step 3. Symmetrization.} Note that the matrices $\mathbf{B}^{j}$ in \eqref{vector form system} are not symmetric as required by Theorem \ref{Kato type thm}. Therefore we transform the system to match the conditions in Theorem \ref{Kato type thm} as follows. Set 
\begin{align*}
\mathbf{S}(u,E,B)\coloneqq \begin{pmatrix}I & 0 & 0\\
0 & I & \mathbf{C}(u)\\
0 & -\mathbf{C}(u) & I
\end{pmatrix}\in M_{9\times 9}(\mathbb{R}).    
\end{align*}
We start by verifying that $\mathbf{S}\mathbf{B}^{j}$ is symmetric. 
Invoking block-matrix multiplication we compute that: 
\begin{align}
\mathbf{S}(u,E,B)\mathbf{B}^{j}(u,E,B)=\begin{pmatrix}u^{j}I & 0 & 0\\
0 & \mathbf{E}_{j}(u)+\mathbf{C}(u)\mathbf{C}(e_{j}) & -\mathbf{C}(e_{j})+\mathbf{C}(u)\mathbf{E}_{j}(u)\\
0 & \mathbf{C}(e_{j})-\mathbf{C}(u)\mathbf{E}_{j}(u) & \mathbf{E}_{j}(u)+\mathbf{C}(u)\mathbf{C}(e_{j})
\end{pmatrix}. 
\label{Multiplication of S by Aj}
\end{align}
By direct calculation we have 
\begin{align*}
\mathbf{E}_{j}(u)+\mathbf{C}(u)\mathbf{C}(e_{j})=\mathbf{E}_{j}(u)+\mathbf{E}_{j}^{T}(u)-u_{j}I    
\end{align*}
which is readily seen to be symmetric. 
Therefore all the matrices on the diagonal in \eqref{Multiplication of S by Aj} are symmetric. In addition we have 
\begin{align*}
\mathbf{C}(a)^{T}=\begin{pmatrix}0 & a^{3} & -a^{2}\\
-a^{3} & 0 & a^{1}\\
a^{2} & -a^{1} & 0
\end{pmatrix}=-\mathbf{C}(a)   
\end{align*}
and
\begin{align*}
\mathbf{C}(u)\mathbf{E}_{j}(u)=\mathbf{E}_{j}(u\times u)=\mathbf{0}_{3\times 3}.   
\end{align*}
This shows that $\mathbf{S}\mathbf{B}^{j}$ is symmetric, i.e. that condition \eqref{symmetry of Aj} is verified. 
Next we verify the condition \eqref{positivity of A0}. Given $\xi=(v,E,B)\in \mathbb{R}^{9}\setminus\{0\}$ and $\zeta=(u,E',B')$ we have 
\begin{align*}
\xi^{T}\mathbf{S}(u,E',B')\xi&=(v,E,B)\begin{pmatrix}v \\
E+\mathbf{C}(u)B \\
-\mathbf{C}(u)E+B 
\end{pmatrix}=\left\vert v\right\vert^{2}+\left\vert E\right\vert^{2}+E\cdot (u\times B)+\left\vert B\right\vert^{2}-B\cdot (u\times E)\\
&=\left\vert v\right\vert^{2}+\left\vert E\right\vert^{2}+\left\vert B\right\vert^{2}-2u\cdot (E\times B). 
\end{align*}
Note that for all $(v,E,B)\neq 0$ and $\left\vert u\right\vert <1$ it holds that  
\begin{align*}
\left\vert v\right\vert^{2}+\left\vert E\right\vert^{2}+\left\vert B\right\vert^{2}-2u\cdot (E\times B)&\geq \left\vert v\right\vert^{2}+\left\vert E\right\vert^{2}+\left\vert B\right\vert^{2}-\left\vert u\right\vert(\left\vert B\right\vert^{2}+\left\vert E\right\vert^{2})\\
&=\left\vert v\right\vert^{2}+(1-\left\vert u\right\vert)(\left\vert E\right\vert^{2}+\left\vert B\right\vert^{2})>0.         
\end{align*}
Fix $\delta >0$ such that $\left\Vert u^{\mathrm{in}}\right\Vert_{\infty}\leq  1-\delta$. Then $\mathbf{S}$ verifies \eqref{positivity of A0} on $$\Omega\coloneqq \{(u,E,B)\in \mathbb{R}^{9}\vert \left\vert u\right\vert< 1-\frac{\delta}{2}\}.$$ 
Moreover, by assumption we also have $\overline{\{(u^{\mathrm{in}}(x),E^{\mathrm{in}}(x),B^{\mathrm{in}}(x))\vert x\in \mathbb{R}^{3}\}}\Subset \Omega$, which shows that condition iii. is satisfied. To conclude, by Theorem \ref{Kato type thm} we have proved that there exist some $T>0$ and a unique solution $W$ to the system 
\begin{align*}
\mathbf{S}(W)\partial_{t}W+\sum_{j=1}^{3}\mathbf{S}(W)\mathbf{B}^{j}\partial_{x_{j}}W=\mathbf{S}(W)F(W), \ W(0,\cdot)=W^{\mathrm{in}}.    
\end{align*}
Since $\mathbf{S}(W)$ is invertible on $[0,T]$ by applying the inverse we get that $W$ is the requested solution for the system \eqref{vector form system}. Finally we have that $W(t,x)\in \Omega$ for all $(t,x)\in [0,T]\times \mathbb{R}^{3}$ so that $\left\Vert u\right\Vert_{\infty}<1 $. 
\end{proof}
\subsection{Weak-strong stability for \eqref{Euler Maxwell}}\label{weak strong subsection} In this section we prove a weak-strong stability inequality for the Euler-Maxwell equation \eqref{Euler Maxwell}. As mentioned in the introduction, the weak-strong stability principle already captures one key idea which reflects the novelty of this work, namely incorporating a corrector within the modulated energy. The proof of the semiclassical limit in the next section will be modeled on this inequality. The main theorem of this section is stated below.  
\begin{thm} Suppose that: 
\begin{itemize}
    \item $(\rho_{1},u_{1},E_{1},B_{1}), (\rho_{2},u_{2},E_{2},B_{2})$ are solutions to \eqref{Euler Maxwell} on $[0,T]$ with initial data $$(\rho_{1}^{\mathrm{in}},u^{\mathrm{in}}_{1},E^{\mathrm{in}}_{1},B^{\mathrm{in}}_{1}),(\rho_{2}^{\mathrm{in}},u^{\mathrm{in}}_{2},E^{\mathrm{in}}_{2},B^{\mathrm{in}}_{2})$$ respectively.
    \item $\mathcal{G}(t)$ is given by \eqref{def of corrected modulated energy} and $e\coloneqq E_{1}-E_{2},b\coloneqq B_{1}-B_{2}$.  
\end{itemize} 
Then, it holds that 
\begin{align}
\frac{\dd}{\dd t}\mathcal{G}(t)
&=-\int_{\mathbb{R}^{3}}\rho_{1}(u_{1}-u_{2})^{\otimes 2}:D_{x}u_{2}\ \dd x
\notag\\
&+\int_{\mathbb{R}^{3}}D_{x}u_{2}:
\left(
e^{\otimes 2}-\frac{1}{2}|e|^{2}I
+b^{\otimes 2}-\frac{1}{2}|b|^{2}I
\right)\ \dd x
-\int_{\mathbb{R}^{3}}\partial_{t}u_{2}\cdot(e\times b)\ \dd x.
\label{eq:time-der-G}
\end{align}
Consequently, if $\left\Vert u_{2}\right\Vert_{L^\infty([0,T];L^{\infty}( \mathbb{R}^{3}))}< 1$  then it holds that 
\begin{align}
\mathcal{H}(t)\leq Ce^{Ct}\mathcal{H}(0)\ \mbox{for all}\ t\in [0,T] 
\label{H bound statement}   
\end{align}
for some constant $C=C(\left\Vert \rho_{2}\right\Vert_{\infty} ,\left\Vert u_{2}\right\Vert _{W^{1,\infty}_{t,x}([0,T]\times \mathbb{R}^{3})})>0$. 
\label{time der of corrected modulated energy thm} 
\end{thm}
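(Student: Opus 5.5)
The plan is to differentiate each piece of $\mathcal{G}$ separately, using the equations \eqref{Euler Maxwell} for both solutions, and then collect terms. Since $\mathcal{G}=\mathcal{H}-\int u_{2}\cdot(e\times b)$, the first step is to take the formula \eqref{formula for time der of H intro} for $\frac{\dd}{\dd t}\mathcal{H}$ as given; it is established in Theorem \ref{thm about der of modualted energy}, or can be rederived by the standard computation.

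For $\mathcal{U}$, I use the continuity equation for $\rho_{1}$ and the momentum equations for $u_{1},u_{2}$. This gives $-\int\rho_{1}(u_{1}-u_{2})^{\otimes2}:D_{x}u_{2}+\int\rho_{1}(u_{1}-u_{2})\cdot(e+u_{1}\times B_{1}-u_{2}\times B_{2})$. The Lorentz part reduces to $\int\rho_{1}(u_{1}-u_{2})\cdot(u_{2}\times b)$, because $(u_{1}-u_{2})\cdot((u_{1}-u_{2})\times B_{1})=0$.

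For $\mathcal{E}+\mathcal{B}$, Maxwell's equations give $\frac{\dd}{\dd t}(\mathcal{E}+\mathcal{B})=-\int e\cdot(\rho_{1}u_{1}-\rho_{2}u_{2})$, since the curl terms cancel after integration by parts. Adding this to the term $\int\rho_{1}(u_{1}-u_{2})\cdot e$ from $\mathcal{U}$ leaves $-\int(\rho_{1}-\rho_{2})u_{2}\cdot e$. This recovers \eqref{formula for time der of H intro}.

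Next I differentiate the corrector. Its time derivative is
\[
\frac{\dd}{\dd t}\int u_{2}\cdot(e\times b)=\int\partial_{t}u_{2}\cdot(e\times b)+\int u_{2}\cdot(\partial_{t}e\times b+e\times\partial_{t}b),
\]
with $\partial_{t}e=\mathrm{curl}\,b-(\rho_{1}u_{1}-\rho_{2}u_{2})$ and $\partial_{t}b=-\mathrm{curl}\,e$. The source term contributes $-\int u_{2}\cdot((\rho_{1}u_{1}-\rho_{2}u_{2})\times b)$. Writing $\rho_{1}u_{1}-\rho_{2}u_{2}=\rho_{1}(u_{1}-u_{2})+(\rho_{1}-\rho_{2})u_{2}$, the second summand vanishes because $u_{2}\cdot(u_{2}\times b)=0$. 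The first summand equals $\int\rho_{1}(u_{1}-u_{2})\cdot(u_{2}\times b)$ after a cyclic permutation of the triple product, and it is subtracted in $\mathcal{G}$. This exactly cancels the dangerous term in \eqref{formula for time der of H intro}.

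For the term $-\int(\rho_{1}-\rho_{2})u_{2}\cdot e$, I use $\rho_{1}-\rho_{2}=\mathrm{div}\,e$. The remaining curl terms $\int u_{2}\cdot(\mathrm{curl}\,b\times b-e\times\mathrm{curl}\,e)$ are handled with the identity $\mathrm{curl}\,a\times a=(a\cdot\nabla)a-\frac12\nabla|a|^{2}$, and $\mathrm{div}\,b=0$ lets me write $(b\cdot\nabla)b=\mathrm{div}(b\otimes b)$. Together with the divergence term, these combine into $-\int u_{2}\cdot\mathrm{div}(e^{\otimes2}-\frac12|e|^{2}I+b^{\otimes2}-\frac12|b|^{2}I)$, i.e. the Maxwell stress tensor. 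Integrating by parts yields the stress-tensor term in \eqref{eq:time-der-G}. The main obstacle is precisely this bookkeeping of signs in the vector identities; I will verify it component-wise, and the justification of the integrations by parts follows from the regularity and decay of the solutions.

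For the Gr\"onwall consequence, I first prove the sandwich estimate $(1-\Lambda)\mathcal{H}\le\mathcal{G}\le(1+\Lambda)\mathcal{H}$ with $\Lambda=\|u_{2}\|_{\infty}<1$. This follows from $|u_{2}\cdot(e\times b)|\le\Lambda|e||b|\le\frac{\Lambda}{2}(|e|^{2}+|b|^{2})$. The right-hand side of \eqref{eq:time-der-G} is bounded by $C(\|D_{x}u_{2}\|_{\infty}+\|\partial_{t}u_{2}\|_{\infty})\mathcal{H}$, and this bound contains no $\|\rho_{1}\|_{\infty}$. Hence $\frac{\dd}{\dd t}\mathcal{G}\le\frac{C}{1-\Lambda}\mathcal{G}$, and Gr\"onwall's inequality together with the sandwich estimate gives \eqref{H bound statement}.
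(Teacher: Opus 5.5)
Your proposal is correct and follows essentially the same route as the paper. You recover $\frac{\dd}{\dd t}\mathcal{H}$, cancel $\int\rho_{1}(u_{1}-u_{2})\cdot(u_{2}\times b)$ against the source part of the corrector's derivative, absorb $-\int(\mathrm{div}\,e)\,u_{2}\cdot e$ into the Maxwell stress tensor via the curl identity, and close with the sandwich lemma and Gr\"onwall. Your Gr\"onwall step, bounding first by $C\mathcal{H}$ and then by $\frac{C}{1-\Lambda}\mathcal{G}$, is stated slightly more carefully than the paper's $\frac{\dd}{\dd t}\mathcal{G}\leq C\mathcal{G}$.
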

We start by calculating the time variation of $\mathcal{H}(t)$, which is the content of the following theorem. We shall invoke the abbreviations $\mathcal{U}(t),\mathcal{E}(t),\mathcal{B}(t)$ as defined in \eqref{modulated energy def}. 
\begin{thm}\label{thm about der of modualted energy}
Suppose that: 
\begin{itemize}
    \item $(\rho_{1},u_{1},E_{1},B_{1}), (\rho_{2},u_{2},E_{2},B_{2})$ are solutions to \eqref{Euler Maxwell} with initial data $$(\rho_{1}^{\mathrm{in}},u^{\mathrm{in}}_{1},E^{\mathrm{in}}_{1},B^{\mathrm{in}}_{1}),(\rho_{2}^{\mathrm{in}},u^{\mathrm{in}}_{2},E^{\mathrm{in}}_{2},B^{\mathrm{in}}_{2})$$ respectively.
    \item $\mathcal{H}(t)$ is given by \eqref{modulated energy def}. 
\end{itemize}
Then, it holds that 
\begin{align}
\frac{\dd}{\dd t}\mathcal{H}(t)=&-\int_{\mathbb{R}^{3}}\rho_{1}(u_{1}-u_{2})^{\otimes 2}:D_{x}u_{2}\ \dd x -\int_{\mathbb{R}^{3}}(\rho_{1}-\rho_{2})u_{2}\cdot (E_{1}-E_{2})\ \dd x \notag\\
&+\int_{\mathbb{R}^{3}}\rho_{1}(u_{1}-u_{2})\cdot (u_{2}\times (B_{1}-B_{2}))\ \dd x. \label{time der of modulated energy}      
\end{align}   
\end{thm}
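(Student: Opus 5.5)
We compute $\frac{\dd}{\dd t}\mathcal{U}$, $\frac{\dd}{\dd t}\mathcal{E}$ and $\frac{\dd}{\dd t}\mathcal{B}$ separately, assuming enough smoothness and decay to differentiate under the integral and integrate by parts, and then add the three formulas. Throughout we write $w\coloneqq u_{1}-u_{2}$, $e\coloneqq E_{1}-E_{2}$, $b\coloneqq B_{1}-B_{2}$.

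\textbf{The kinetic part.} Subtracting the momentum equations of \eqref{Euler Maxwell} gives
\begin{align*}
\partial_{t}w+u_{1}D_{x}w=-wD_{x}u_{2}+e+u_{1}\times B_{1}-u_{2}\times B_{2}.
\end{align*}
We combine this with the continuity equation for $\rho_{1}$; the transport term then integrates away, because $\partial_{t}\rho_{1}+\mathrm{div}(\rho_{1}u_{1})=0$. This yields
\begin{align*}
\frac{\dd}{\dd t}\mathcal{U}=-\int\rho_{1}w^{\otimes2}:D_{x}u_{2}\,\dd x+\int\rho_{1}w\cdot e\,\dd x+\int\rho_{1}w\cdot(u_{1}\times B_{1}-u_{2}\times B_{2})\,\dd x.
\end{align*}
For the Lorentz term we split
\begin{align*}
u_{1}\times B_{1}-u_{2}\times B_{2}=w\times B_{1}+u_{2}\times b .
\end{align*}
Since $w\cdot(w\times B_{1})=0$, only $\int\rho_{1}w\cdot(u_{2}\times b)\,\dd x$ survives, which is the last term of \eqref{time der of modulated energy}.

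\textbf{The field parts.} The Maxwell equations give
\begin{align*}
\partial_{t}e-\mathrm{curl}\,b=-(\rho_{1}u_{1}-\rho_{2}u_{2}),\qquad \partial_{t}b+\mathrm{curl}\,e=0.
\end{align*}
Hence
\begin{align*}
\frac{\dd}{\dd t}(\mathcal{E}+\mathcal{B})=\int\big(e\cdot\mathrm{curl}\,b-b\cdot\mathrm{curl}\,e\big)\,\dd x-\int e\cdot(\rho_{1}u_{1}-\rho_{2}u_{2})\,\dd x.
\end{align*}
The first integral equals $-\int\mathrm{div}(e\times b)\,\dd x=0$ by the identity $\mathrm{div}(e\times b)=b\cdot\mathrm{curl}\,e-e\cdot\mathrm{curl}\,b$. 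For the second we write
\begin{align*}
\rho_{1}u_{1}-\rho_{2}u_{2}=\rho_{1}w+(\rho_{1}-\rho_{2})u_{2}.
\end{align*}

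\textbf{Summation.} Adding the two computations, the term $\int\rho_{1}w\cdot e\,\dd x$ from $\frac{\dd}{\dd t}\mathcal{U}$ cancels exactly against $-\int\rho_{1}w\cdot e\,\dd x$ from the field part. What remains is precisely \eqref{time der of modulated energy}.

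The main point of care is the kinetic computation: one must track the transport and continuity terms correctly, namely
\begin{align*}
\frac{\dd}{\dd t}\frac12\int\rho_{1}|w|^{2}\,\dd x=\int\rho_{1}w\cdot(\partial_{t}w+u_{1}D_{x}w)\,\dd x,
\end{align*}
and one must choose the splitting of the Lorentz force so that $B_{1}$, which belongs to the weak solution, drops out by orthogonality. The remaining justification is to check that the solutions have enough integrability, with decay at infinity, for the boundary terms in the integrations by parts to vanish; this is where we expect any technical difficulty to lie.
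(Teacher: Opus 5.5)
Your proposal is correct and follows the paper's proof essentially step for step. It uses the same equation for $w=u_{1}-u_{2}$ with the splitting $u_{1}\times B_{1}-u_{2}\times B_{2}=w\times B_{1}+u_{2}\times b$ (the $w\times B_{1}$ contribution vanishing by orthogonality), the continuity equation for $\rho_{1}$ to absorb the transport term, an integration by parts to kill the curl terms in $\frac{\dd}{\dd t}(\mathcal{E}+\mathcal{B})$, and the cancellation of $\int\rho_{1}w\cdot e\,\dd x$ upon summation.
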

\begin{proof}
\textbf{Time variation of $\mathcal{U}(t)$.} We claim that the following formula holds  \begin{align}
\frac{\dd}{\dd t}\mathcal{U}(t)=&\int_{\mathbb{R}^{3}}\rho_{1}(u_{1}-u_{2})\cdot (E_{1}-E_{2})\ \dd x
+\int_{\mathbb{R}^{3}}\rho_{1}(u_{1}-u_{2})\cdot (u_{2}\times (B_{1}-B_{2}))\ \dd x \notag\\
&-\int_{\mathbb{R}^{3}}\rho_{1}(u_{1}-u_{2})^{\otimes 2}:D_{x}u_{2}\ \dd x. \label{Time der of kinetic part formula}   \end{align}
For brevity we set $w\coloneqq u_{1}-u_{2}$ so that  
\begin{align*}
\frac{\dd}{\dd t}\mathcal{U}(t)=\frac{1}{2}\int_{\mathbb{R}^{3}}\partial_{t}\rho_{1}\left\vert w\right\vert^{2}\ \dd x+\int_{\mathbb{R}^{3}}\rho_{1}w\cdot \partial_{t}w\ \dd x.   \end{align*}
Note that $w$ is governed by the following equation: 
\begin{align*}
\partial_{t}w+u_{1}D_{x}u_{1}-u_{2}D_{x}u_{2}=E_{1}+u_{1}\times B_{1}-E_{2}-u_{2}\times B_{2},     
\end{align*}
which is recast as 
\begin{align*}
\partial_{t}w+u
_{1}D_{x}w+wD_{x}u_{2}=E_{1}-E_{2}-u_{2}\times (B_{2}-B_{1})+w\times B_{1}. 
\end{align*}
Therefore, we obtain  
\begin{align*}
\frac{\dd}{\dd t}\mathcal{U}(t)=&\int_{\mathbb{R}^{3}}\rho_{1}u_{1}\cdot (D_{x}w w^{T})\ \dd x\\
&+\int_{\mathbb{R}^{3}}\rho_{1}w\cdot (E_{1}-E_{2}-u_{2}\times (B_{2}-B_{1})+w\times B_{1})\ \dd x\\
&-\int_{\mathbb{R}^{3}}\rho_{1}w\cdot(u_{1}D_{x}w+wD_{x}u_{2})\ \dd x\\
=& \int_{\mathbb{R}^{3}}\rho_{1}w\cdot (E_{1}-E_{2})\ \dd x
-\int_{\mathbb{R}^{3}}\rho_{1}w^{\otimes 2}:D_{x}u_{2}\ \dd x\\
&+\int_{\mathbb{R}^{3}}\rho_{1}u_{1}\cdot (D_{x}w w^{T})\ \dd x-\int_{\mathbb{R}^{3}}\rho_{1}w\cdot (u_{2}\times (B_{2}-B_{1}))\ \dd x-\int_{\mathbb{R}^{3}}\rho_{1}w\cdot (u_{1}D_{x}w)\ \dd x. 
\end{align*}
Note that 
\begin{align*}
w\cdot (u_{1}D_{x}w)=u_{1}\cdot (D_{x}w w^{T})  
\end{align*}
and therefore 
\begin{align*}
\int_{\mathbb{R}^{3}}\rho_{1}u_{1}\cdot (D_{x}w w^{T})\ \dd x-\int_{\mathbb{R}^{3}}\rho_{1}w\cdot (u_{1}D_{x}w)\ \dd x=0.     
\end{align*}
Consequently we conclude that 
\begin{align*}
\frac{\dd}{\dd t}\mathcal{U}(t)=&\int_{\mathbb{R}^{3}}\rho_{1}w\cdot (E_{1}-E_{2})\ \dd x+\int_{\mathbb{R}^{3}}\rho_{1}w\cdot (u_{2}\times (B_{1}-B_{2}))\ \dd x\\
&-\int_{\mathbb{R}^{3}}\rho_{1}w^{\otimes 2}:D_{x}u_{2}\ \dd x, 
\end{align*}
which establishes \eqref{Time der of kinetic part formula}.\\ 
\textbf{Calculation of $\frac{\dd}{\dd t}\mathcal{E}(t)+\frac{\dd}{\dd t}B(t)$.} We claim there holds the formula 
\begin{align}
\frac{\dd}{\dd t}\mathcal{E}(t)+\frac{\dd }{\dd t}\mathcal{B}(t)=\int_{\mathbb{R}^{3}}(E_{1}-E_{2})\cdot (\rho_{2}u_{2}-\rho_{1}u_{1})\ \dd x. \label{time der of maxwell part}    
\end{align}
We compute that 
\begin{align*}
\frac{\dd}{\dd t} \mathcal{E}(t)&=\int_{\mathbb{R}^{3}}(E_{1}-E_{2})\cdot(\mathrm{curl}(B_{1})-\rho_{1}u_{1}-\mathrm{curl}(B_{2})+\rho_{2}u_{2})\ \dd x \notag\\
&=\int_{\mathbb{R}^{3}}(E_{1}-E_{2})\cdot \mathrm{curl}(B_{1}-B_{2})\ \dd x+\int_{\mathbb{R}^{3}}(E_{1}-E_{2})\cdot (\rho_{2}u_{2}-\rho_{1}u_{1})\ \dd x 
\end{align*}
and 
\begin{align*}
\frac{\dd}{\dd t}\mathcal{B}(t)=\int_{\mathbb{R}^{3}}(B_{1}-B_{2})\cdot \mathrm{curl}(E_{2}-E_{1})\ \dd x.   
\end{align*}
Integration by parts yields 
\begin{align*}
\int_{\mathbb{R}^{3}}(E_{1}-E_{2})\cdot \mathrm{curl}(B_{1}-B_{2})\ \dd x+\int_{\mathbb{R}^{3}}(B_{1}-B_{2})\cdot \mathrm{curl}(E_{2}-E_{1})\ \dd x=0.      
\end{align*}
Consequently, we deduce that 
\begin{align*}
\frac{\dd}{\dd t}\mathcal{E}(t)+\frac{\dd}{\dd t}\mathcal{B}(t)=\int_{\mathbb{R}^{3}}(E_{1}-E_{2})\cdot (\rho_{2}u_{2}-\rho_{1}u_{1})\ \dd x 
\end{align*}
which establishes \eqref{time der of maxwell part}.\\
\textbf{Conclusion.} From \eqref{Time der of kinetic part formula} and \eqref{time der of maxwell part} we conclude that 
\begin{align*}
\frac{\dd}{\dd t}\mathcal{H}(t)=&\int_{\mathbb{R}^{3}}\rho_{1}(u_{1}-u_{2})\cdot (E_{1}-E_{2})\ \dd x+\int_{\mathbb{R}^{3}}(E_{1}-E_{2})\cdot (\rho_{2} u_{2}-\rho_{1}u_{1})\ \dd x\\
&+\int_{\mathbb{R}^{3}}\rho_{1}(u_{1}-u_{2})\cdot (u_{2}\times (B_{1}-B_{2}))\ \dd x
-\int_{\mathbb{R}^{3}}\rho_{1}(u_{1}-u_{2})^{\otimes 2}:D_{x}u_{2}\ \dd x\\
=&-\int_{\mathbb{R}^{3}}\rho_{1}(u_{1}-u_{2})^{\otimes 2}:D_{x}u_{2}\ \dd x -\int_{\mathbb{R}^{3}}(\rho_{1}-\rho_{2})u_{2}\cdot (E_{1}-E_{2})\ \dd x\\
&+\int_{\mathbb{R}^{3}}\rho_{1}(u_{1}-u_{2})\cdot (u_{2}\times (B_{1}-B_{2}))\ \dd x,
\end{align*} 
as desired. 
\end{proof}
The following simple lemma shows that for $u_{2}$ small enough the corrected modulated energy $\mathcal{G}(t)$ is bounded from below and above by means of $\mathcal{H}(t)$. This is precisely where the smallness assumption on  $u_{2}$ comes into play.  
\begin{lem}
Let the assumptions of Theorem \ref{thm about der of modualted energy} hold and assume in addition that $\left\Vert u_{2}\right\Vert_{\infty} \leq \Lambda$ for some $\Lambda <1 $. Then, it holds that   
\begin{align*}
(1-\Lambda)\mathcal{H}(t)\leq \mathcal{G}(t)\leq (1+\Lambda) \mathcal{H}(t).      
\end{align*}
\label{sandwich lemma}
\end{lem}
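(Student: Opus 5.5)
The plan is a pointwise bound on the correction term, integrated and then compared with the field part of $\mathcal{H}$. By definition \eqref{def of corrected modulated energy},
\begin{align*}
\mathcal{G}(t)-\mathcal{H}(t)=-\int_{\mathbb{R}^{3}}u_{2}\cdot(e\times b)\ \dd x,
\end{align*}
with $e=E_{1}-E_{2}$ and $b=B_{1}-B_{2}$. It therefore suffices to show
\begin{align*}
\left\vert \int_{\mathbb{R}^{3}}u_{2}\cdot (e\times b)\ \dd x\right\vert\leq \Lambda\,(\mathcal{E}(t)+\mathcal{B}(t)).
\end{align*}
Once this holds, the nonnegativity of $\mathcal{U}(t)$ gives $\Lambda(\mathcal{E}+\mathcal{B})\leq \Lambda\mathcal{H}$. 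This yields both inequalities, $\mathcal{G}\geq \mathcal{H}-\Lambda\mathcal{H}$ and $\mathcal{G}\leq \mathcal{H}+\Lambda\mathcal{H}$.

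For the pointwise estimate at each $x$, I will use that $|e\times b|\leq |e||b|$, together with Young's inequality $|e||b|\leq \frac{1}{2}(|e|^{2}+|b|^{2})$. Combined with $|u_{2}(t,x)|\leq \Lambda$, these give
\begin{align*}
|u_{2}\cdot(e\times b)|\leq \frac{\Lambda}{2}\left(|e|^{2}+|b|^{2}\right).
\end{align*}
This is the same computation as the positivity check for $\mathbf{S}$ in the proof of Theorem \ref{local existence of solution}, since $2u\cdot(E\times B)\leq |u|(|E|^{2}+|B|^{2})$. Integrating over $\mathbb{R}^{3}$ gives exactly $\Lambda(\mathcal{E}(t)+\mathcal{B}(t))$.

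There is no substantial obstacle. The only point requiring care is integrability: $e,b\in L^{2}$ makes $e\times b\in L^{1}$, and $u_{2}\in L^{\infty}$ then makes the correction term finite. Also, $\mathcal{U}\geq 0$ because $\rho_{1}\geq 0$.
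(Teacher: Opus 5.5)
Your proposal is correct and follows essentially the same route as the paper. The paper likewise bounds the correction term by $\|u_{2}\|_{\infty}\int_{\mathbb{R}^{3}}|e||b|\,\dd x\leq \Lambda(\mathcal{E}(t)+\mathcal{B}(t))$ via Young's inequality, and then uses $\mathcal{E}+\mathcal{B}\leq\mathcal{H}$ to conclude both inequalities.
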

\begin{proof}
We estimate that 
\begin{align*}
&\left\vert \int_{\mathbb{R}^{3}}u_{2}\cdot ((E_{1}-E_{2})\times (B_{1}-B_{2}))\ \dd x\right\vert\leq \left\Vert u_{2}\right\Vert_{\infty}\int_{\mathbb{R}^{3}}\left\vert E_{1}-E_{2}\right\vert\left\vert B_{1}-B_{2}\right\vert\ \dd x\\
&\leq \Lambda\Big(\frac{1}{2}\int_{\mathbb{R}^{3}}\left\vert E_{1}-E_{2}\right\vert^{2}\ \dd x+\frac{1}{2}\int_{\mathbb{R}^
{3}}\left\vert B_{1}-B_{2}\right\vert^{2}\ \dd x  \Big)\leq \Lambda (\mathcal{E}(t)+\mathcal{B}(t)).    \end{align*}
Therefore we conclude that 
\begin{align*}
(1-\Lambda)\mathcal{H}(t)\leq \mathcal{G}(t)\leq  (1+\Lambda) \mathcal{H}(t),   
\end{align*}
as asserted. 
\end{proof}
\begin{rem}
The same inequality holds for the quantum modulated energies $\mathcal{H}_{\hbar}, \mathcal{G}_{\hbar}$. Indeed, by exactly the same considerations we have 
\begin{align*}
(1-\Lambda)\mathcal{H}_{\hbar}(t)\leq \mathcal{G}_{\hbar}(t)\leq (1+\Lambda)\mathcal{H}_{\hbar}(t)    
\end{align*}
provided that $\left\Vert u\right\Vert_{L^{\infty}([0,T];L^{\infty}(\mathbb{R}^{3}))}\leq \Lambda$ for some $0<\Lambda <1$. 
\end{rem}
We are now well positioned to conclude the proof of Theorem \ref{time der of corrected modulated energy thm}.

\vspace{0.4 cm}

\textbf{Proof of Theorem \eqref{time der of corrected modulated energy thm}.}
\textbf{The identity \eqref{eq:time-der-G}}. Recall that we have set $w=u_{1}-u_{2}, e=E_{1}-E_{2}$ and $b=B_{1}-B_{2}$. By Theorem \ref{thm about der of modualted energy} we have 
\begin{align}
\frac{\dd}{\dd t}\mathcal{G}(t)&=-\int_{\mathbb{R}^{3}}\rho_{1}w^{\otimes 2}:D_{x}u_{2}\ \dd x -\int_{\mathbb{R}^{3}}(\rho_{1}-\rho_{2})u_{2}\cdot e\ \dd x \notag\\
&+\int_{\mathbb{R}^{3}}\rho_{1}w\cdot (u_{2}\times b)\ \dd x-\int_{\mathbb{R}^{3}}\partial_{t}u_{2}\cdot (e\times b)\ \dd x \notag\\
&-\int_{\mathbb{R}^{3}}u_{2}\cdot (\partial_{t}e\times b+e\times \partial_{t}b)\ \dd x\coloneqq \sum_{j=1}^{5}\mathcal{T}_{j}. \label{formula for time der of G}   
\end{align}
One has the following identities:  
\begin{align*}
\partial_{t}e&=-\rho_{1}u_{1}+\mathrm{curl}(B_{1})+\rho_{2}u_{2}-\mathrm{curl}(B_{2})=\rho_{2}u_{2}-\rho_{1}u_{1}+\mathrm{curl}(B_{1}-B_{2})\\
&= \rho_{2}u_{2}-\rho_{1}u_{1}+\mathrm{curl}(b) 
\end{align*}
and 
\begin{align*}
\partial_{t}b=\mathrm{curl}(E_{2})-\mathrm{curl}(E_{1})=-\mathrm{curl}(e).     
\end{align*}
Therefore, it follows that  
\begin{align}
\mathcal{T}_{5}=
\int_{\mathbb{R}^{3}} u_{2}\cdot (e\times \mathrm{curl}(e))\ \dd x
-\int_{\mathbb{R}^{3}} u_{2}\cdot (\mathrm{curl}(b)\times b)\ \dd x
-\int_{\mathbb{R}^{3}} u_{2}\cdot ((\rho_{2}u_{2}-\rho_{1}u_{1})\times b)\ \dd x. \label{wedge product term}
\end{align}
Recall that for a vector field $v:\mathbb{R}^{3}\rightarrow \mathbb{R}^{3}$ there holds the identity 
\begin{align*}
\mathrm{curl}(v)\times v=\mathrm{div}\left(v
^{\otimes 2} -\frac{1}{2}\left\vert v\right\vert^{2} I\right)-(\mathrm{div}v)v.   
\end{align*}
Therefore, we find that 
\begin{align}
\int_{\mathbb{R}^{3}} u_{2}\cdot (e\times \mathrm{curl}(e))\ \dd x&=-\int_{\mathbb{R}^{3}} u_{2}\cdot \mathrm{div}\left(e^{\otimes 2}-\frac{1}{2}\left\vert e\right\vert^{2}I \right)\ \dd x+\int_{\mathbb{R}^{3}}(\mathrm{div}e)e\cdot u_{2}\ \dd x \notag\\
&=\int_{\mathbb{R}^{3}} D_{x}u_{2}:\left(e^{\otimes 2} -\frac{1}{2}\left\vert e\right\vert^{2}I \right)\ \dd x+\int_{\mathbb{R}^{3}}(\mathrm{div}e)e\cdot u_{2}\ \dd x  
\label{e times curl e}
\end{align}
and that 
\begin{align}
-\int_{\mathbb{R}^{3}} u_{2}\cdot (\mathrm{curl}(b)\times b)\ \dd x&=-\int_{\mathbb{R}^{3}} u_{2}\mathrm{div}\left(b^{\otimes 2}-\frac{1}{2}\left\vert b\right\vert^{2}I \right)\ \dd x+\underset{=0}{\underbrace{\int_{\mathbb{R}^{3}}(\mathrm{div}b)b\cdot u_{2}\ \dd x}} \notag\\
&=\int_{\mathbb{R}^{3}} D_{x}u_{2}:\left(b^{\otimes 2}-\frac{1}{2}\left\vert b\right\vert^{2} I\right)\ \dd x. \label{curl b times b}      
\end{align}
Substituting \eqref{e times curl e}-\eqref{curl b times b} inside \eqref{wedge product term} we get 
\begin{align*}
\mathcal{T}_{5}&=\int_{\mathbb{R}^{3}}D_{x}u_{2}:\left(e^{\otimes 2}-\frac{1}{2}\left\vert e\right\vert^{2}I+b^{\otimes 2}-\frac{1}{2}\left\vert b\right\vert^{2}I  \right)\ \dd x\\
&-\int_{\mathbb{R}^{3}}u_{2}\cdot \left((\rho_{2}u_{2}-\rho_{1}u_{1})\times b\right)\ \dd x
+\int_{\mathbb{R}^{3}}(\mathrm{div}e)e\cdot u_{2}\ \dd x. 
\end{align*}
Furthermore, we compute that   
\begin{align*}
\mathcal{T}_{3}&=\int_{\mathbb{R}^{3}} \rho_{1}u_{1}\cdot (u_{2}\times (B_{1}-B_{2}))\ \dd x\\
&=\int_{\mathbb{R}^{3}}\rho_{1} u_{2} \cdot ((B_{1}-B_{2})\times u_{1})\ \dd x=\int_{\mathbb{R}^{3}}\rho_{1}u_{2}\cdot (b\times u_{1})\ \dd x.     
\end{align*}
Consequently, it follows that 
\begin{align}
\mathcal{T}_{3}+\mathcal{T}_{5}&=\int_{\mathbb{R}^{3}}D_{x}u_{2}:\left(e^{\otimes 2}-\frac{1}{2}\left\vert e\right\vert ^{2}I+b^{\otimes 2}-\frac{1}{2}\left\vert b\right\vert^{2}I\right)\ \dd x
 \notag\\
&+\int_{\mathbb{R}^{3}}\rho_{1}u_{2}\cdot (b\times u_{1})\ \dd x-\int_{\mathbb{R}^{3}} u_{2}\cdot ((\rho_{2}u_{2}-\rho_{1}u
_{1})\times b)\ \dd x+\int_{\mathbb{R}^{3}}(\mathrm{div}e)e\cdot u_{2}\ \dd x \notag\\
&=\int_{\mathbb{R}^{3}}D_{x}u_{2}:\left(e^{\otimes 2}-\frac{1}{2}\left\vert e\right\vert^{2} I+b^{\otimes 2}-\frac{1}{2}\left\vert b\right\vert^{2}I \right)\ \dd x \notag\\
&+\underset{=0}{\underbrace{\int_{\mathbb{R}^{3}}\rho_{1}u_{2}\cdot (b\times u_{1})\ \dd x+\int_{\mathbb{R}^{3}}\rho_{1}u_{2}\cdot (u_{1}\times b)\ \dd x}}+\int_{\mathbb{R}^{3}}(\mathrm{div}e)e\cdot u_{2}\ \dd x \notag\\
&=\int_{\mathbb{R}^{3}}D_{x}u_{2}:\left(e^{\otimes 2}-\frac{1}{2}\left\vert e\right\vert^{2}I+b^{\otimes 2}-\frac{1}{2}\left\vert b\right\vert^{2}I  \right)\ \dd x+\int_{\mathbb{R}^{3}}(\mathrm{div}e)e\cdot u_{2}\ \dd x. \label{sum of T3T5} \end{align}
Finally, using that $\mathrm{div}E_{i}=\rho_{i}$ we see that  
\begin{align}
\mathcal{T}_{2}=-\int_{\mathbb{R}^{3}} (\rho_{1}-\rho_{2}) u_{2}\cdot (E_{1}-E_{2})\ \dd x=-\int_{\mathbb{R}^{3}}(\mathrm{div}e)e\cdot u_{2}\ \dd x 
\label{identity} 
\end{align}
and hence 
\begin{align}
\mathcal{T}_{2}+\mathcal{T}_{3}+\mathcal{T}_{5}=\int_{\mathbb{R}^{3}}D_{x}u_{2}:\left(e^{\otimes 2}-\frac{1}{2}\left\vert e\right\vert^{2}I+b^{\otimes 2}-\frac{1}{2}\left\vert b\right\vert^{2}I  \right)\ \dd x. \label{T2T3T5 classical}    
\end{align}
Substituting \eqref{T2T3T5 classical} inside \eqref{formula for time der of G} we get 
\begin{align*}
\frac{\dd}{\dd t}\mathcal{G}(t)=&-\int_{\mathbb{R}^{3}}\rho_{1}(u_{1}-u_{2})^{\otimes 2}:D_{x}u_{2}\ \dd x\\
&+\int_{\mathbb{R}^{3}}D_{x}u_{2}:\left(e^{\otimes 2}-\frac{1}{2}\left\vert e\right\vert^{2}I+b^{\otimes 2}-\frac{1}{2}\left\vert b\right\vert^{2}I  \right)\ \dd x\\
&-\int_{\mathbb{R}^{3}}\partial_{t}u_{2}\cdot ((E_{1}-E_{2})\times (B_{1}-B_{2}))\ \dd x\coloneqq \sum_{k=1}^{3}I_{k},  
\end{align*}
which establishes the announced identity \eqref{eq:time-der-G}.\\ 
\textbf{The estimate \eqref{H bound statement}.}  First note that 
\begin{align*}
I_{1}\leq \left\Vert D_{x}u_{2}\right\Vert_{\infty}\int_{\mathbb{R}^{3}}\rho_{1}\left\vert u_{1}-u_{2}\right\vert^{2}\ \dd x\leq 2\left\Vert D_{x}u_{2}\right\Vert_{\infty}\mathcal{U}(t) .      
\end{align*}
In addition, we have 
\begin{align*}
I_{2}&\leq \frac{3}{2}\left\Vert D_{x}u_{2}\right\Vert_{\infty}\int_{\mathbb{R}^{3}}\left\vert E_{1}-E_{2}\right\vert^{2}\ \dd x+\frac{3}{2}\left\Vert D_{x}u_{2}\right\Vert_{\infty}\int_{\mathbb{R}^{3}}\left\vert B_{1}-B_{2}\right\vert^{2}\ \dd x\\
&= 3\left\Vert D_{x}u_{2}\right\Vert_{\infty} (\mathcal{E}(t)+\mathcal{B}(t)). \end{align*}
Finally, we estimate that 
\begin{align*}
I_{3}\leq \left\Vert \partial_{t}u_{2}\right\Vert_{\infty}\left(\frac{1}{2}\int_{\mathbb{R}^{3}}\left\vert E_{1}-E_{2}\right\vert^{2}\ \dd x+\frac{1}{2}\int_{\mathbb{R}^{3}}\left\vert B_{1}-B_{2}\right\vert^{2}\ \dd x\right)\leq \left\Vert \partial_{t}u_{2}\right\Vert_{\infty}(\mathcal{E}(t)+\mathcal{B}(t)). \end{align*}
Gathering our inequalities we deduce that 
\begin{align*}
\frac{\dd}{\dd t}\mathcal{G}(t)\leq C\mathcal{G}(t)    
\end{align*}
so that 
\begin{align*}
\mathcal{G}(t)\leq \mathcal{G}(0)+C\int_{0}^{t}\mathcal{G}(\tau)\ \dd \tau.    
\end{align*}
By means of Lemma \ref{sandwich lemma} we find 
\begin{align*}
(1-\left\Vert u_{2}\right\Vert_{\infty})\mathcal{H}(t)\leq (1+\left\Vert u_{2}^{\mathrm{in}}\right\Vert_{\infty} )\mathcal{H}(0)+C(1+\left\Vert u_{2}\right\Vert_{\infty} )\int_{0}^{t}\mathcal{H}(\tau)\ \dd \tau,     
\end{align*}
or equivalently 
\begin{align*}
\mathcal{H}(t)\leq \frac{1+\left\Vert u^{\mathrm{in}}_{2}\right\Vert_{\infty} }{1-\left\Vert u_{2}\right\Vert_{\infty}}\mathcal{H}(0)+C\frac{1+\left\Vert u_{2}\right\Vert_{\infty}}{1-\left\Vert u_{2}\right\Vert_{\infty}}\int_{0}^{t}\mathcal{H}(\tau)\ \dd \tau.    
\end{align*}
The inequality \eqref{H bound statement} now follows immediately by Gr\"onwall's lemma. 
\qed
\section{The semiclassical limit: Proof of Theorem \ref{main thm}}\label{main thm section}
We abbreviate by $H_{\hbar}$ the magnetized self consistent quantum Hamiltonian given by  $H_{\hbar}\coloneqq \frac{1}{2}(i\hbar\nabla +A_{\hbar})^{2}+\varphi_{\hbar}$ so that the first equation in \eqref{Maxwell Schrodinger eq} takes the compact form  
\begin{align}
i\hbar \partial_{t}R_{\hbar}=\left[H_{\hbar},R_{\hbar}\right]. \label{equation in terms of Hamiltonian}     
\end{align}
Also, we will often use the fact that $\rho_{\hbar}$ is governed by the transport equation
\begin{align}
\partial_{t}\rho_{\hbar}+\mathrm{div}J_{\hbar}=0. \label{transport equation for quantum density}    
\end{align}
We start by establishing conservation of total energy:  
\begin{lem}\label{Conservation of energy}
Let the assumptions of Theorem \ref{main thm} hold. Let the total energy $\mathcal{F}_{\hbar}(t)$ be defined by 
\begin{align}
\mathcal{F}_{\hbar}(t)\coloneqq \frac{1}{2}\mathrm{tr}\left((i\hbar \nabla+A_{\hbar})^{2}R_{\hbar}\right)+\frac{1}{2}\int_{\mathbb{R}^{3}}\left\vert E_{\hbar}\right\vert^{2}\ \dd x+\frac{1}{2}\int_{\mathbb{R}^{3}}\left\vert B_{\hbar}\right\vert^{2}\ \dd x.       
\end{align}
Then, it holds that $\frac{\dd}{\dd t}\mathcal{F}_{\hbar}(t)=0$. 
\end{lem}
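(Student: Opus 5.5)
We differentiate the three pieces of $\mathcal{F}_{\hbar}$ separately and show that the kinetic contribution equals $\int_{\mathbb{R}^{3}} J_{\hbar}\cdot E_{\hbar}\,\dd x$, while the field contribution equals $-\int_{\mathbb{R}^{3}} J_{\hbar}\cdot E_{\hbar}\,\dd x$. The regularity of Theorem \ref{Global well posedness of MS} ($\psi_{\hbar},A_{\hbar}\in C(\mathbb{R};H^{4})$, $\partial_{t}A_{\hbar}\in C(\mathbb{R};H^{3})$) justifies all differentiations under the integral and all integrations by parts below. Both $\varphi_{\hbar}=(-\Delta)^{-1}\rho_{\hbar}$ and $\partial_t\varphi_{\hbar}=(-\Delta)^{-1}\partial_t\rho_{\hbar}$ are controlled in $L^{2}$ thanks to \eqref{transport equation for quantum density}.

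\textbf{Kinetic part.} Write $K_{\hbar}\coloneqq\frac{1}{2}(i\hbar\nabla+A_{\hbar})^{2}$, so that $H_{\hbar}=K_{\hbar}+\varphi_{\hbar}$. The operator itself depends on time, hence
\begin{align*}
\frac{\dd}{\dd t}\mathrm{tr}(K_{\hbar}R_{\hbar})=\mathrm{tr}\big((\partial_{t}K_{\hbar})R_{\hbar}\big)+\mathrm{tr}\big(K_{\hbar}\partial_{t}R_{\hbar}\big).
\end{align*}
For the first term, $\partial_{t}K_{\hbar}=\frac{1}{2}(i\hbar\nabla+A_{\hbar})\vee\partial_{t}A_{\hbar}$. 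By the duality formula of the Remark, this gives $\mathrm{tr}((\partial_{t}K_{\hbar})R_{\hbar})=-\int J_{\hbar}\cdot\partial_{t}A_{\hbar}\,\dd x$.

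For the second term, we use \eqref{equation in terms of Hamiltonian} and cyclicity of the trace. The commutator $[K_{\hbar},K_{\hbar}]$ vanishes, so
\begin{align*}
\mathrm{tr}(K_{\hbar}\partial_{t}R_{\hbar})=\frac{1}{i\hbar}\mathrm{tr}\big(K_{\hbar}[\varphi_{\hbar},R_{\hbar}]\big)=\frac{1}{i\hbar}\mathrm{tr}\big([K_{\hbar},\varphi_{\hbar}]R_{\hbar}\big).
\end{align*}
A direct computation gives $[K_{\hbar},\varphi_{\hbar}]=\frac{i\hbar}{2}\big((i\hbar\nabla+A_{\hbar})\vee\nabla\varphi_{\hbar}\big)$. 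Applying the duality formula again, this term equals $-\int J_{\hbar}\cdot\nabla\varphi_{\hbar}\,\dd x$.

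Summing the two terms and recalling $E_{\hbar}=-\partial_{t}A_{\hbar}-\nabla\varphi_{\hbar}$, we obtain
\begin{align*}
\frac{\dd}{\dd t}\frac{1}{2}\mathrm{tr}\big((i\hbar\nabla+A_{\hbar})^{2}R_{\hbar}\big)=\int_{\mathbb{R}^{3}} J_{\hbar}\cdot E_{\hbar}\,\dd x.
\end{align*}
Trace manipulations on the rank-one operator $R_{\hbar}$ amount to integrations against $\psi_{\hbar}$, which is sufficiently regular for this to be legitimate.

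\textbf{Field part.} We first recover the Maxwell equations from \eqref{Maxwell Schrodinger eq}. Since $B_{\hbar}=\mathrm{curl}A_{\hbar}$, we have $\partial_{t}B_{\hbar}+\mathrm{curl}E_{\hbar}=0$ immediately. For the other equation, $\mathrm{div}A_{\hbar}=0$ gives $\mathrm{curl}B_{\hbar}=-\Delta A_{\hbar}$. Combining this with the wave equation, and using $\partial_{t}\nabla\varphi_{\hbar}=\nabla\Delta^{-1}\mathrm{div}J_{\hbar}$ (from \eqref{transport equation for quantum density} and $-\Delta\varphi_{\hbar}=\rho_{\hbar}$), we get
\begin{align*}
\partial_{t}E_{\hbar}=-\partial_{tt}A_{\hbar}-\nabla\partial_{t}\varphi_{\hbar}=-\mathbb{P}J_{\hbar}+\mathrm{curl}B_{\hbar}-\nabla\Delta^{-1}\mathrm{div}J_{\hbar}=\mathrm{curl}B_{\hbar}-J_{\hbar},
\end{align*}
where the last step is \eqref{Leray projection def}. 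Then, exactly as in the proof of \eqref{time der of maxwell part}, the curl terms cancel after integration by parts, and
\begin{align*}
\frac{\dd}{\dd t}\Big(\frac{1}{2}\|E_{\hbar}\|_{2}^{2}+\frac{1}{2}\|B_{\hbar}\|_{2}^{2}\Big)=-\int_{\mathbb{R}^{3}} J_{\hbar}\cdot E_{\hbar}\,\dd x.
\end{align*}
Adding this to the kinetic identity yields $\frac{\dd}{\dd t}\mathcal{F}_{\hbar}=0$.

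\textbf{Main obstacle.} The delicate points are the sign and normalisation conventions in the commutator $[K_{\hbar},\varphi_{\hbar}]$, and justifying the trace identities in which unbounded operators appear. I would handle the latter by working directly with $\psi_{\hbar}$: integrals such as $\Re\int\overline{\partial_{t}\psi_{\hbar}}\,(\cdots)\psi_{\hbar}\,\dd x$ make sense in $H^{4}$. A second point to check is that $\mathbb{P}J_{\hbar}$ combines correctly with the gradient part to give $J_{\hbar}$, which is what forces the use of the continuity equation.
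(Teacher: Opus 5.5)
Your proof is correct and follows the paper's argument. The kinetic part gives $-\int_{\mathbb{R}^3}J_\hbar\cdot(\partial_tA_\hbar+\nabla\varphi_\hbar)\,\dd x=\int_{\mathbb{R}^3}J_\hbar\cdot E_\hbar\,\dd x$ by the duality formula, and the field part gives $-\int_{\mathbb{R}^3}J_\hbar\cdot E_\hbar\,\dd x$ by $\partial_tE_\hbar=-\Delta A_\hbar-J_\hbar$, using the continuity equation and the Leray decomposition. Your use of cyclicity to kill $\mathrm{tr}(K_\hbar[K_\hbar,R_\hbar])$ is cleaner than the paper's explicit expansion of $[\frac{1}{2}(i\hbar\nabla+A_\hbar)^2,(i\hbar\nabla+A_\hbar)^2]$.

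One small correction to your regularity remark: $\varphi_\hbar$ itself is not in $L^2(\mathbb{R}^3)$, since it decays like $1/(4\pi|x|)$ because $\int\rho_\hbar=1$. The argument only ever uses $\nabla\varphi_\hbar$ and $\nabla\partial_t\varphi_\hbar$, which are in $L^2$, so nothing is affected.
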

\begin{proof}
We compute that 
\begin{align}
&\frac{\dd}{\dd t}\frac{1}{2}\mathrm{tr}\left((i\hbar \nabla+A_{\hbar})^{2}R_{\hbar}\right)
=\frac{1}{2}\mathrm{tr}\left((i\hbar\nabla+A_{\hbar})\vee \partial_{t}A_{\hbar}R_{\hbar}\right) \notag\\
&-\frac{i}{2\hbar}\mathrm{tr}\left((i\hbar\nabla +A_{\hbar})^{2}\left[\frac{1}{2}(i\hbar \nabla+A_{\hbar})^{2}+\varphi_{\hbar},R_{\hbar}\right]\right) \notag\\
&=-\int_{\mathbb{R}^{3}}J_{\hbar}\cdot \partial_{t}A_{\hbar}\ \dd x+\frac{i}{2\hbar}\mathrm{tr}\left(\left[\frac{1}{2}(i\hbar\nabla+A_{\hbar})^{2}+\varphi_{\hbar},(i\hbar\nabla+A_{\hbar})^{2}\right]R_{\hbar}\right). \label{time der of kinetic part}
\end{align}
We have 
\begin{align*}
&\frac{i}{2\hbar}\left[\frac{1}{2}(i\hbar\nabla+A_{\hbar})^{2},(i\hbar \nabla+A_{\hbar})^{2}\right]\\
&=\sum_{1\leq j,k\leq 3}\frac{i}{4\hbar}(i\hbar\partial_{x_{k}}+A^{k}_{\hbar})\vee \left((i\hbar\partial_{x_{j}}+A^{j}_{\hbar})\vee \left[i\hbar\partial_{x_{j}}+A_{\hbar}^{j},i\hbar \partial_{x_{k}}+A_{\hbar}^{k}\right]\right)\\
&=\frac{i}{4\hbar}\sum_{1\leq j,k\leq 3}(i\hbar\partial_{x_{k}}+A^{k}_{\hbar})\vee \left((i\hbar\partial_{x_{j}}+A^{j}_{\hbar})\vee \left(i\hbar \partial_{x_{j}}A^{k}_{\hbar}-i\hbar\partial_{x_{k}}A^{j}_{\hbar}\right)\right)\\
&=\frac{1}{4}\sum_{1\leq j,k\leq 3}(i\hbar\partial_{x_{k}}+A^{k}_{\hbar})\vee \left((i\hbar\partial_{x_{j}}+A^{j}_{\hbar})\vee (\partial_{x_{k}}A^{j}_{\hbar}-\partial_{x_{j}}A^{k}_{\hbar})\right)=0.\end{align*}
The last equation  follows from the fact that if $\mathbf{J}$ is a antisymmetric matrix, $A:\mathbb{R}^{3}\rightarrow \mathbb{R}^{3}$ a given vector field  and $\Pi^{k}=i\hbar\partial_{x_{k}}+A^{k}$ then $\sum_{1\leq j,k\leq 3}\Pi^{k}\vee (\Pi^{j}\vee \mathbf{J}_{jk})=0$ (see claim 3.10 in \cite{ben2024magnetic}).   
In addition, it holds that 
\begin{align*}
\frac{i}{2\hbar}\left[\varphi_{\hbar},(i\hbar\nabla +A_{\hbar})^{2}\right]=-\frac{i}{2\hbar}(i\hbar \nabla+A_{\hbar})\vee (i\hbar \nabla \varphi_{\hbar})=\frac{1}{2} (i\hbar\nabla+A_{\hbar})\vee \nabla \varphi_{\hbar}    
\end{align*}
so that
\begin{align*}
\frac{i}{2\hbar}\mathrm{tr}\left(\left[\varphi_{\hbar},(i\hbar\nabla+A_{\hbar})^{2}\right]R_{\hbar}\right)=\frac{1}{2}\mathrm{tr}\left((i\hbar\nabla+A_{\hbar})\vee \nabla \varphi_{\hbar}R_{\hbar}\right)=-\int_{\mathbb{R}^{3}}\nabla \varphi_{\hbar}\cdot J_{\hbar}\ \dd x.     
\end{align*}
Thus, owing to \eqref{time der of kinetic part} we conclude that 
\begin{align}
\frac{\dd}{\dd t}\frac{1}{2}\mathrm{tr}\left((i\hbar \nabla+A_{\hbar})^{2}R_{\hbar}\right)=-\int_{\mathbb{R}^{3}} J_{\hbar}\cdot \partial_{t}A_{\hbar}\ \dd x-\int_{\mathbb{R}^{3}}J_{\hbar}\cdot\nabla \varphi_{\hbar}\ \dd x= \int_{\mathbb{R}^{3}}J_{\hbar}\cdot E_{\hbar}\ \dd x. 
\label{time der of quantum nonmodulated kinetic part}
\end{align}
In addition, we have 
\begin{align}
&\frac{\dd}{\dd t}\frac{1}{2}\left(\int_{\mathbb{R}^{3}}\left\vert E_{\hbar}\right\vert^{2}\ \dd x+\int_{\mathbb{R}^{3}}\left\vert B_{\hbar}\right\vert^{2}\ \dd x  \right) \notag\\
&=\int_{\mathbb{R}^{3}}E_{\hbar}\cdot\partial_{t}E_{\hbar}\ \dd x+\int_{\mathbb{R}^{3}}B_{\hbar}\cdot \partial_{t}B_{\hbar}\ \dd x \notag\\
&=\int_{\mathbb{R}^{3}}E_{\hbar}\cdot\partial_{t}E_{\hbar}\ \dd x+\int_{\mathbb{R}^{3}}\mathrm{curl}(A_{\hbar})\cdot \partial_{t}\mathrm{curl}(A_{\hbar})\ \dd x \notag\\
&=\int_{\mathbb{R}^{3}}E_{\hbar}\cdot \partial_{t}E_{\hbar}\ \dd x+\int_{\mathbb{R}^{3}}\mathrm{curl}^{2}(A_{\hbar})\cdot \partial_{t}A_{\hbar}\ \dd x \notag\\
&=\int_{\mathbb{R}^{3}}E_{\hbar}\cdot \partial_{t}E_{\hbar}\ \dd x+\int_{\mathbb{R}^{3}}\Delta A_{\hbar}\cdot (\nabla \varphi_{\hbar}+E_{\hbar})\ \dd x, \label{time der of nonmodulated maxwell part}
\end{align}
where in last equation we used the identity $\mathrm{curl}^
{2}(A)=-\Delta A$ for divergence-free $A$. Furthermore, using that $\partial_{t}\rho_{\hbar}+\mathrm{div}J_{\hbar}=0$ (by \eqref{transport equation for quantum density}) we see that  
\begin{align}
\partial_{t}E_{\hbar}=-\partial_{tt}A_{\hbar}-\partial_{t}\nabla \varphi_{\hbar}=-\Delta A_{\hbar}-\mathbb{P}J_{\hbar}-\nabla(\Delta)^{-1}\mathrm{div}J_{\hbar}=-\Delta A_{\hbar}-J_{\hbar}.\label{time der of Eh}
\end{align}
Therefore the right-hand side of \eqref{time der of nonmodulated maxwell part} is 
\begin{align}
-\int_{\mathbb{R}^{3}}E_{\hbar}\cdot (\Delta A_{\hbar}+J_{\hbar})\ \dd x+\int_{\mathbb{R}^{3}}\Delta A_{\hbar}\cdot E_{\hbar}\ \dd x+\underset{=0}{\underbrace{\int_{\mathbb{R}^{3}}\Delta A_{\hbar}\cdot \nabla \varphi_{\hbar}\ \dd x} }=-\int_{\mathbb{R}^{3}}E_{\hbar}\cdot J_{\hbar}\ \dd x.  \label{time der of quantum nonmod maxwell part final} 
\end{align}
The combination of \eqref{time der of quantum nonmodulated kinetic part} and \eqref{time der of quantum nonmod maxwell part final} yields $\frac{\dd}{\dd t}\mathcal{F}_{\hbar}(t)=0$. 
\end{proof}
\begin{thm} 
Let the assumptions of Theorem \ref{main thm} hold and let $\mathcal{H}_{\hbar}(t)$ be given by \eqref{def of quantum modualted energy}.
Then, it holds that 
\begin{align}
\frac{\dd}{\dd t}\mathcal{H}_{\hbar}(t)=&-\frac{1}{4}\sum_{1\leq j,k\leq 3}\mathrm{tr}\left((i\hbar\partial_{x_{j}}+A^{j}_{\hbar}+u^{j})\vee \left((i\hbar\partial_{x_{k}}+A^{k}_{\hbar}+u^{k})\vee \partial_{x_{k}}u^{j}\right)R_{\hbar}\right)\notag\\
&-\int_{\mathbb{R}^{3}}(\rho_{\hbar}-\rho)u\cdot (E_{\hbar}-E)\ \dd x \notag\\
&+\int_{\mathbb{R}^{3}}(J_{\hbar}-\rho_{\hbar}u)\cdot (u\times (B_{\hbar}-B))\ \dd x.  \label{time der of modulated energy statement}
\end{align}
\label{time der of quantum modulated energy thm}
\end{thm}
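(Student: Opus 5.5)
We will mirror the classical computation of Theorem \ref{thm about der of modualted energy}, splitting $\mathcal{H}_{\hbar}=\mathcal{U}_{\hbar}+\mathcal{E}_{\hbar}+\mathcal{B}_{\hbar}$. The starting point is to expand the kinetic part:
\begin{align*}
\mathcal{U}_{\hbar}(t)=\frac{1}{2}\mathrm{tr}\left((i\hbar\nabla+A_{\hbar})^{2}R_{\hbar}\right)+\frac{1}{2}\mathrm{tr}\left((i\hbar\nabla+A_{\hbar})\vee u\,R_{\hbar}\right)+\frac{1}{2}\int_{\mathbb{R}^{3}}\rho_{\hbar}\left\vert u\right\vert^{2}\ \dd x .
\end{align*}
By the duality definition of $J_{\hbar}$, this equals $\frac{1}{2}\mathrm{tr}((i\hbar\nabla+A_{\hbar})^{2}R_{\hbar})-\int J_{\hbar}\cdot u+\frac12\int\rho_{\hbar}|u|^{2}$. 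Similarly, $\mathcal{E}_{\hbar}+\mathcal{B}_{\hbar}$ is the field energy, minus $\int (E_{\hbar}\cdot E+B_{\hbar}\cdot B)$, plus $\frac12\int(|E|^{2}+|B|^{2})$. Thus $\mathcal{H}_{\hbar}=\mathcal{F}_{\hbar}+(\text{cross terms})+(\text{limit-only terms})$. By Lemma \ref{Conservation of energy}, $\mathcal{F}_{\hbar}$ has zero time derivative, so only the cross terms need to be differentiated.

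\textbf{Step 1: the kinetic cross terms.} We compute $\frac{\dd}{\dd t}\left(-\int J_{\hbar}\cdot u+\frac12\int\rho_{\hbar}|u|^{2}\right)$. The $\rho_{\hbar}$-term is handled by \eqref{transport equation for quantum density}, which yields $\int J_{\hbar}\cdot\nabla\frac{|u|^2}{2}+\int\rho_{\hbar}u\cdot\partial_t u$. For the $J_{\hbar}$-term we write $-\int J_{\hbar}\cdot u=\frac12\mathrm{tr}((i\hbar\nabla+A_{\hbar})\vee u R_{\hbar})$ and differentiate using \eqref{equation in terms of Hamiltonian}. This produces three contributions:
\begin{itemize}
\item $\int\rho_{\hbar}\,u\cdot\partial_tA_{\hbar}$;
\item $-\int J_{\hbar}\cdot\partial_t u$;
\item $\frac{i}{2\hbar}\mathrm{tr}([H_{\hbar},(i\hbar\nabla+A_{\hbar})\vee u]R_{\hbar})$.
\end{itemize}
The commutator is the heart of the matter. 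Its $\varphi_{\hbar}$ part gives $-\int\rho_{\hbar}u\cdot\nabla\varphi_{\hbar}$, which combines with the $\partial_tA_{\hbar}$ term into $\int\rho_{\hbar}u\cdot E_{\hbar}$ (up to sign). The kinetic part $[\frac12\Pi^2,\Pi\vee u]$, with $\Pi=i\hbar\nabla+A_{\hbar}$, splits in two. The first piece is $[\Pi^k,\Pi^j]=i\hbar(\partial_jA^k_{\hbar}-\partial_kA^j_{\hbar})$, which produces the Lorentz force $-\frac12\mathrm{tr}((\Pi\times B_{\hbar})\vee u\,R_{\hbar})$, i.e. $\int J_{\hbar}\cdot(u\times B_{\hbar})$ up to sign. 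The second piece is $[\Pi^k,u^j]=i\hbar\partial_ku^j$, which produces the double anticommutator $\frac14\sum\mathrm{tr}(\Pi^j\vee(\Pi^k\vee\partial_ku^j)R_{\hbar})$.

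\textbf{Step 2: completing the square.} We then insert the Euler equation $\partial_tu=-uD_xu+E+u\times B$ and regroup. The aim is to rewrite the double anticommutator in $\Pi$ plus the $\partial_t u$, $\nabla|u|^2$, $uD_xu$ terms as the same double anticommutator with $\Pi$ replaced by $\Pi+u$, as in \eqref{time der of modulated energy statement}. Expanding $(\Pi+u)\vee((\Pi+u)\vee\partial_k u^j)$ gives terms $\mathrm{tr}(\Pi^j\vee u^k\partial_ku^j R)=-4\int J_{\hbar}\cdot(uD_x u)$-type expressions and $\int\rho_{\hbar}u\otimes u:D_xu$. These should cancel exactly against the transport and convective terms, just as $w\cdot(u_1D_xw)=u_1\cdot(D_xww^T)$ did classically. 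Moreover, $u\times B$ from $\partial_tu$ paired against $J_{\hbar}-\rho_{\hbar}u$ combines with the Lorentz term into $\int(J_{\hbar}-\rho_{\hbar}u)\cdot(u\times(B_{\hbar}-B))$.

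\textbf{Step 3: the field cross terms.} We differentiate $-\int(E_{\hbar}\cdot E+B_{\hbar}\cdot B)+\frac12\int(|E|^2+|B|^2)$ using \eqref{time der of Eh}, i.e. $\partial_tE_{\hbar}=\mathrm{curl}B_{\hbar}-J_{\hbar}$ (since $-\Delta A_{\hbar}=\mathrm{curl}B_{\hbar}$), together with $\partial_tB_{\hbar}=-\mathrm{curl}E_{\hbar}$ and the Maxwell part of \eqref{Euler Maxwell}. After the curl terms cancel by integration by parts, what remains is $\int J_{\hbar}\cdot E+\int\rho u\cdot(E_{\hbar}-E)$-type terms. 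Combined with $\int\rho_{\hbar}u\cdot E_{\hbar}-\int J_{\hbar}\cdot E$ and $\int\rho_{\hbar}u\cdot E$ from Steps 1–2, these collapse to $-\int(\rho_{\hbar}-\rho)u\cdot(E_{\hbar}-E)$.

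The main obstacle is the operator algebra in Step 1, in particular tracking the signs in $[\frac12\Pi^2,\Pi\vee u]$ and identifying the magnetic term through the antisymmetry of $\partial_jA^k_{\hbar}-\partial_kA^j_{\hbar}$ (in the style of claim 3.10 of \cite{ben2024magnetic}). The regrouping in Step 2 into the $(\Pi+u)$ double anticommutator is the second delicate point. Regularity from Theorem \ref{Global well posedness of MS} ($H^4$) justifies all traces and integrations by parts.
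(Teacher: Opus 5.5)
Your proof is correct (after the sign fixes below) and reaches the stated formula, but it is organized differently from the paper's.

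\textbf{Field part.} The paper does not subtract the whole conserved energy $\mathcal{F}_{\hbar}$. It uses only the kinetic identity $\frac{\dd}{\dd t}\frac{1}{2}\mathrm{tr}((i\hbar\nabla+A_{\hbar})^{2}R_{\hbar})=\int J_{\hbar}\cdot E_{\hbar}\,\dd x$ taken from the proof of Lemma \ref{Conservation of energy}. It then differentiates $\mathcal{E}_{\hbar}+\mathcal{B}_{\hbar}$ directly to get $-\int (E_{\hbar}-E)\cdot(J_{\hbar}-\rho u)\,\dd x$. Your subtraction of $\mathcal{F}_{\hbar}$ is equivalent, and has the small advantage that only quantum--classical cross terms are ever differentiated.

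\textbf{Kinetic part.} This is where the routes genuinely differ. The paper treats $\frac12\mathrm{tr}((\Pi\vee u+|u|^{2})R_{\hbar})$ as one block and writes its derivative as $\frac12\mathrm{tr}\big((\partial_{t}+\frac{i}{\hbar}[H_{\hbar},\cdot])(\Pi\vee u+|u|^{2})R_{\hbar}\big)$, so no continuity equation is used for $\frac12\int\rho_{\hbar}|u|^{2}$. It then applies the Leibniz rule for this derivation to $(\frac12 u^{j}+\Pi^{j})\vee u^{j}$. After adding and subtracting $u^{k}\partial_{x_k}u^{j}$, this gives at once $(u^{j}+\Pi^{j})\vee(\partial_{t}u^{j}+(uD_{x}u)^{j})$ together with the double anticommutator in $\Pi+u$. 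So the square never has to be completed, and the Euler equation enters through a material derivative.

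Your route (continuity equation, a commutator giving the double anticommutator in $\Pi$ alone, then re-expansion in $\Pi+u$) is more elementary term by term. It needs one extra identity, $u^{j}\vee(\Pi^{k}\vee\partial_{x_k}u^{j})=2\,\Pi^{k}\vee(u^{j}\partial_{x_k}u^{j})$, which holds because $[\Pi^{k},u^{j}]=i\hbar\partial_{x_k}u^{j}$ is a multiplication operator. This cross term supplies exactly the piece $\int J_{\hbar}\cdot\nabla\frac{|u|^{2}}{2}\,\dd x$ that absorbs your transport term. The leftover terms $\pm\int(J_{\hbar}-\rho_{\hbar}u)\cdot uD_{x}u\,\dd x$ then cancel after inserting the Euler equation, as you anticipate.

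\textbf{Lorentz term.} The same mechanism, $(\Pi^{k}\vee f)\vee g=2\,\Pi^{k}\vee(fg)$ for multiplication operators $f,g$, plus $\sum_{j}(\partial_{x_j}A^{k}_{\hbar}-\partial_{x_k}A^{j}_{\hbar})u^{j}=-(u\times B_{\hbar})^{k}$, is all you need here. Claim 3.10 of the reference you cite is a cancellation used only in the energy lemma, not for this term.

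\textbf{Signs to fix.}
\begin{itemize}
\item The $\varphi_{\hbar}$-commutator gives $+\int\rho_{\hbar}u\cdot\nabla\varphi_{\hbar}\,\dd x$, not $-$. Combined with $\int\rho_{\hbar}u\cdot\partial_{t}A_{\hbar}\,\dd x$ this yields $-\int\rho_{\hbar}u\cdot E_{\hbar}\,\dd x$. With your sign the two terms would not combine into $E_{\hbar}$ at all, so ``up to sign'' is not harmless here.
\item The double anticommutator in $\Pi$ carries the coefficient $-\frac14$, not $\frac14$.
\end{itemize}
With these corrections, the Step 3 inputs are $-\int\rho_{\hbar}u\cdot E_{\hbar}-\int J_{\hbar}\cdot E+\int\rho_{\hbar}u\cdot E$. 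Together with the field cross terms $\int J_{\hbar}\cdot E+\int\rho u\cdot(E_{\hbar}-E)$, they collapse exactly to $-\int(\rho_{\hbar}-\rho)u\cdot(E_{\hbar}-E)\,\dd x$.
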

\begin{proof}
\textbf{Calculation of $\frac{\dd}{\dd t}\mathcal{U}_{\hbar}(t)$.}  We claim to have the following formula 
\begin{align}
\frac{\dd}{\dd t}\mathcal{U}_{\hbar}(t)&=-\frac{1}{4}\sum_{1\leq j,k\leq 3}\mathrm{tr}\left((i\hbar\partial_{x_{j}}+A^{j}_{\hbar}+u^{j})\vee\left(( i\hbar\partial_{x_{k}}+A^{k}_{\hbar}+u^{k})\vee  \partial_{x_{j}}u^{k}\right)R_{\hbar}\right) \notag\\
&+\int_{\mathbb{R}^{3}}(J_{\hbar}-\rho_{\hbar}u)\cdot (E_{\hbar}-E)\ \dd x \notag\\
&+\int_{\mathbb{R}^{3}}(J_{\hbar}-\rho_{\hbar}u)\cdot (u\times (B_{\hbar}-B))\ \dd x. \label{time der of U} 
\end{align}
By expanding we see that \begin{align*}
\frac{\dd}{\dd t}\mathcal{U}_{\hbar}(t)&=\frac{1}{2}\frac{\dd}{\dd t}\mathrm{tr}\left((i\hbar \nabla+A_{\hbar})^{2}R_{\hbar}\right)+\frac{1}{2}\frac{\dd}{\dd t}\mathrm{tr}\left((i\hbar \nabla +A_{\hbar})\vee u R_{\hbar}\right)+\frac{\dd}{\dd t}\frac{1}{2}\int_{\mathbb{R}^{3}}\left\vert u\right\vert^{2}\rho_{\hbar}\ \dd x\coloneqq\sum_{k=1}^{3}I_{k}.\end{align*}
In the sequel we shall freely make use of \eqref{equation in terms of Hamiltonian}. We compute:
\begin{align*}
I_{2}+I_{3}&=\frac{\dd}{\dd t}\frac{1}{2}\mathrm{tr}\left((i\hbar \nabla +A_{\hbar})\vee uR_{\hbar}\right)+\frac{1}{2}\frac{\dd}{\dd t}\mathrm{tr}\left(\left\vert u\right\vert^{2}R_{\hbar} \right)\\
&=\frac{1}{2}\mathrm{tr}\left(((i\hbar \nabla +A_{\hbar})\vee u+\left\vert u\right\vert^{2})\partial_{t}R_{\hbar} \right)+\frac{1}{2}\mathrm{tr}\left(\partial_{t}\left((i\hbar \nabla+A_{\hbar})\vee u+\left\vert u\right\vert^{2} \right)R_{\hbar}\right)
\\
&=\frac{1}{2i\hbar}\mathrm{tr}\left(((i\hbar \nabla +A_{\hbar})\vee u+\left\vert u\right\vert^{2} )\left[H_{\hbar},R_{\hbar}\right]\right)+\frac{1}{2}\mathrm{tr}\left(\partial_{t}\left((i\hbar \nabla+A_{\hbar})\vee u+\left\vert u\right\vert^{2} \right)R_{\hbar}\right)\\
&=\frac{i}{2\hbar}\mathrm{tr}\left(\left[H_{\hbar},(i\hbar\nabla+A_{\hbar})\vee u+\left\vert u\right\vert^{2} \right]R_{\hbar}\right)+\frac{1}{2}\mathrm{tr}\left(\partial_{t}\left((i\hbar\nabla+A_{\hbar})\vee u+\left\vert u\right\vert^{2} \right)R_{\hbar}\right)\\
&=\frac{1}{2}\mathrm{tr}\left(\left(\partial_{t}+\frac{i}{\hbar}\left[H_{\hbar},\cdot\right]\right)\left((i\hbar\nabla+A_{\hbar})\vee u+\left\vert u\right\vert^{2} \right)R_{\hbar}\right)\\
&=\frac{1}{2}\mathrm{tr}\left(\left(\partial_{t}+\frac{i}{\hbar}\left[\frac{1}{2}(i\hbar\nabla+A_{\hbar})^{2},\cdot\right]\right)\left((i\hbar\nabla+A_{\hbar})\vee u+\frac{1}{2}u\vee u\right)R_{\hbar}\right)\\
&+\frac{1}{2}\mathrm{tr}\left(\frac{i}{\hbar}\left[\varphi_{\hbar},\cdot\right]\left((i\hbar\nabla+A_{\hbar})\vee u+\frac{1}{2}u\vee u\right)R_{\hbar}\right)\\
&=\frac{1}{2}\sum_{j=1}^{3}\mathrm{tr}\left(\left(\partial_{t}+\frac{i}{\hbar}\left[\frac{1}{2}(i\hbar\nabla+A_{\hbar})^{2},\cdot\right]\right)\left((i\hbar\partial_{x_{j}}+A_{\hbar}^{j})\vee u^{j}+\frac{1}{2}u^{j}\vee u^{j}\right)R_{\hbar}\right)\\
&+\frac{1}{2}\sum_{j=1}^{3}\mathrm{tr}\left(\frac{i}{\hbar}\left[\varphi_{\hbar},\cdot\right]\left((i\hbar\partial_{x_{j}}+A_{\hbar}^{j})\vee u^{j}+\frac{1}{2}u^{j}\vee u^{j}\right)R_{\hbar}\right)\\
&\coloneqq\frac{1}{2}\sum_{j=1}^{3}\mathrm{tr}\left(L_{1j}R_{\hbar}\right)+\frac{1}{2}\sum_{j=1}^{3}\mathrm{tr}\left(L_{2j}R_{\hbar}\right).  
\end{align*}
Set $\Pi^{j}\coloneqq i\hbar\partial_{x_{j}}+A^{j}_{\hbar}$. We start with the term $L_{1j}$. Using the  Leibniz rule: $\left[A,B\vee C\right]=\left[A,B\right]\vee C+\left[A,C\right]\vee B,$ we can write $L_{1j}$ as follows: 
\begin{align}
L_{1j}&=\left(\partial_{t}+\frac{i}{\hbar}\left[\frac{1}{2}\sum_{k=1}^{3}(\Pi^{k})^{2},\cdot\right]\right)\left((\frac{1}{2}u^{j}+\Pi^{j})\vee u^{j}\right) \notag\\
&=\left(\left(\partial_{t}+\frac{i}{\hbar}\left[\frac{1}{2}\sum_{k=1}^{3}(\Pi^{k})^{2},\cdot\right]\right)(\frac{1}{2}u^{j}+\Pi^{j})\right)\vee u^{j} \notag\\
&+(\frac{1}{2}u^{j}+\Pi^{j})\vee \left(\left(\partial_{t}+\frac{i}{\hbar}\left[\frac{1}{2}\sum_{k=1}^{3}(\Pi^{k})^{2},\cdot\right]\right)u^{j}\right) \notag\\
&=\Big(\Big(\partial_{t}+\frac{i}{\hbar}\Big[\frac{1}{2}\sum_{k=1}^{3}(\Pi^{k})^{2},\cdot\Big]\Big)u^{j}\Big)\vee u^{j}+\Pi^{j}\vee \Big(\Big(\partial_{t}+\frac{i}{\hbar}\Big[\frac{1}{2}\sum_{k=1}^{3}(\Pi^{k})^{2},\cdot\Big]\Big)u^{j}\Big) \notag\\
&+\left(\left(\partial_{t}+\frac{i}{\hbar}\Big[\frac{1}{2}\sum_{k=1}^{3}(\Pi^{k})^{2},\cdot\Big]\right)\Pi^{j}\right)\vee u^{j}. \label{L1j first formula} 
\end{align}
We compute that 
\begin{align*}
\left[(\Pi^{k})^{2},\Pi^{j}\right]=\Pi^{k}\vee\left[\Pi^{k},\Pi^{j}\right]=\Pi^{k}\vee \left[i\hbar\partial_{x_{k}}+A^{k}_{\hbar},i\hbar \partial_{x_{j}}+A^{j}_{\hbar}\right]=\Pi^{k}\vee (i\hbar \partial_{x_{k}}A^{j}_{\hbar}-i\hbar\partial_{x_{j}}A ^{k}_{\hbar}), 
\end{align*}
and hence setting $\mathbf{J}_{jk}=\partial_{x_{j}}A^{k}_{\hbar}-\partial_{x_{k}}A^{j}_{\hbar}$ we get   
\begin{align}
\frac{i}{\hbar}\Big[\frac{1}{2}\sum_{k=1}^{3}(\Pi^{k})^{2},\Pi^{j}\Big]=\frac{1}{2}\sum_{k=1}^{3}\Pi^{k}\vee(\partial_{x_{j}}A^{k}_{\hbar}-\partial_{x_{k}}A^{j}_{\hbar})=\frac{1}{2}\sum_{k=1}^{3}\Pi^{k}\vee \mathbf{J}_{jk}.  \label{last term 1body}     
\end{align}
Inserting \eqref{last term 1body} inside \eqref{L1j first formula} we get 
\begin{align}
L_{1j}&=(u^{j}+\Pi^{j})\vee \Big(\Big(\partial_{t}+\frac{i}{\hbar}\left[\frac{1}{2}\sum_{k=1}^{3}(\Pi^{k})^{2},\cdot\right]\Big)u^{j}\Big)+\partial_{t}A^{j}_{\hbar}\vee u^{j}+\frac{1}{2}\sum_{k=1}^{3}(\Pi^{k}\vee \mathbf{J}_{jk})\vee u^{j} \notag\\
&=(u^{j}+\Pi^{j})\vee \Big(\partial_{t}u^{j}+\sum_{k=1}^{3}u^{k}\partial_{x_{k}}u^{j}\Big)+(u^{j}+\Pi^{j})\vee \Big(\frac{i}{\hbar}\Big[\frac{1}{2}\sum_{k=1}^{3}(\Pi^{k})^{2},u^{j}\Big]-\sum_{k=1}^{3}u^{k}\partial_{x_{k}}u^{j}\Big) \notag\\
&+\frac{1}{2}\sum_{k=1}^{3}(\Pi^{k}\vee \mathbf{J}_{jk})\vee u^{j}+\partial_{t}A_{\hbar}^{j}\vee u^{j}. \label{second right hand side L1j} 
\end{align}
Note that 
\begin{align*}
\frac{i}{2\hbar}\left[(\Pi^{k})^{2},u
^{j}\right]=\frac{i}{2\hbar}\Pi^{k}\vee \left[i\hbar\partial_{x_{k}},u^{j}\right]=-\frac{1}{2}\Pi^{k}\vee \partial_{x_{k}}u^{j}
\end{align*}
so that the second term in the right-hand side of \eqref{second right hand side L1j} reads
\begin{align*}
&-\sum_{k=1}^{3}(u^{j}+\Pi^{j})\vee (\frac{1}{2}\Pi^{k}\vee \partial_{x_{k}}u^{j}+u^{k}\partial_{x_{k}}u^{j})\\
&=-\sum_{k=1}^{3}(u^{j}+\Pi^{j})\vee (\frac{1}{2}\Pi^{k}\vee \partial_{x_{k}}u^{j}+\frac{1}{2}u^{k}\vee \partial_{x_{k}}u^{j})\\
&=-\frac{1}{2}\sum_{k=1}^{3}(u^{j}+\Pi^{j})\vee ((u^{k}+\Pi^{k})\vee \partial_{x_{k}}u^{j}).     
\end{align*}
So we find that 
\begin{align*}
L_{1j}=&(u^{j}+\Pi^{j})\vee \Big(\partial_{t}u^{j}+\sum_{k=1}^{3}u^{k}\partial_{x_{k}}u^{j}\Big)-\frac{1}{2}\sum_{k=1}^{3}(u^{j}+\Pi^{j})\vee \left((u^{k}+\Pi^{k})\vee \partial_{x_{k}}u^{j}\right)\\
+&\frac{1}{2}\sum_{k=1}^{3}(\Pi^{k}\vee \mathbf{J}_{jk})\vee u^{j}+\partial_{t}A_{\hbar}^{j}\vee u^{j}. 
\end{align*}
Therefore we conclude that
\begin{align}
\frac{1}{2}\sum_{j=1}^{3}\mathrm{tr}\big(L_{1j}R_{\hbar}\big)
&=\frac{1}{2}\sum_{j=1}^{3}\mathrm{tr}\big((u^{j}+\Pi^{j})\vee(\partial_{t}u^{j}+(uD_{x}u )^{j})R_{\hbar}\big) \notag \\
&-\frac{1}{4}\sum_{1\leq j,k\leq 3}\mathrm{tr}\big((u^{j}+\Pi^{j})\vee((u^{k}+\Pi^{k})\vee\partial_{x_{k}}u^{j})R_{\hbar}\big) \notag\\
&+\frac{1}{2}\sum_{j=1}^{3}\mathrm{tr}\left(\left(\frac{1}{2}\sum_{k=1}^{3}(\Pi^{k}\vee \mathbf{J}_{jk})\vee u^{j}+\partial_{t}A^{j}_{\hbar}\vee u^{j}\right) R_{\hbar}\right).
\label{eq for J1 1body}
\end{align}
As for $L_{2j}$, note the following identities
\begin{align*}
[\varphi_{\hbar},u^{j}\vee u^{j}]=0,\qquad [\varphi_{\hbar},A^{j}_{\hbar}u^{j}]=0, \qquad  
-[\varphi_{\hbar},\partial_{x_{j}}\vee u^{j}]=-\left[\varphi_{\hbar},2u^{j}\partial_{x_{j}}\right]=2u^{j}\partial_{x_{j}}\varphi_{\hbar}.
\end{align*}
Consequently, it follows that 
\begin{align}
\frac{1}{2}\sum_{j=1}^{3}\mathrm{tr}\big(L_{2j}R_{\hbar}\big)=\int_{\mathbb{R}^{3}}\rho_{\hbar}\nabla \varphi_{\hbar}\cdot u\,\dd x.
\label{eq for J2 1body}
\end{align}
Therefore we infer that 
\begin{align}
I_{2}+I_{3}&= \frac{1}{2}\sum_{j=1}^{3}\mathrm{tr}\big((u^{j}+\Pi^{j})\vee(\partial_{t}u^{j}+(uD_{x}u )^{j})R_{\hbar}\big) \notag \\
&-\frac{1}{4}\sum_{1\leq j,k\leq 3}\mathrm{tr}\big((u^{j}+\Pi^{j})\vee((u^{k}+\Pi^{k})\vee\partial_{x_{k}}u^{j})R_{\hbar}\big) \notag\\
&+\frac{1}{2}\sum_{j=1}^{3}\mathrm{tr}\left(\left(\frac{1}{2}\sum_{k=1}^{3}(\Pi^{k}\vee \mathbf{J}_{jk})\vee u^{j}+\partial_{t}A^{j}_{\hbar}\vee u^{j}\right) R_{\hbar}\right) \notag\\
&+\int_{\mathbb{R}^{3}}\rho_{\hbar}\nabla \varphi_{\hbar}\cdot u\,\dd x\coloneqq T_{1}+T_{2}+T_{3}+T_{4}. \label{T2 formula}\end{align}
Due to \eqref{Euler Maxwell} we have 
\begin{align}
T_{1}&=\frac{1}{2}\sum_{j=1}^{3}\mathrm{tr}\left((u^{j}+\Pi^{j})\vee (E+u\times B)^{j}R_{\hbar}\right) \notag\\
&=\int_{\mathbb{R}^{3}}\rho_{\hbar}u\cdot E\ \dd x-\int_{\mathbb{R}^{3}}J_{\hbar}\cdot (E+u\times B)\ \dd x \notag\\
&=\int_{\mathbb{R}^{3}}(\rho_{\hbar}u-J_{\hbar})\cdot E\ \dd x-\int_{\mathbb{R}^{3}}J_{\hbar}\cdot u\times B\ \dd x. 
\label{T1formula}
\end{align}
By Lemma 4.2 in \cite{porat2026magnetized} there holds the identity 
\begin{align*}
\sum_{1\leq j,k\leq 3}(\Pi^{k}\vee \mathbf{J}_{jk})\vee u^{j}=\sum_{1\leq j,k\leq 3}\Pi^{j}\vee (\mathbf{J}_{kj}\vee u^{k}),    
\end{align*}
and therefore the term $T_{3}$ is recast as  
\begin{align}
T_{3}&=\frac{1}{4}\mathrm{tr}\left(\sum_{1\leq j,k\leq 3} \Pi^{j}\vee (\mathbf{J}_{kj}\vee u^{k})R_{\hbar}\right)+\int_{\mathbb{R}^{3}}\partial_{t}A_{\hbar}\cdot u\rho_{\hbar}\ \dd x \notag\\
&=-\sum_{1\leq j,k\leq 3}\int_{\mathbb{R}^{3}}J_{\hbar}^{j}\mathbf{J}_{kj}u^{k}\ \dd x+\int_{\mathbb{R}^{3}}\partial_{t}A_{\hbar}\cdot u\rho_{\hbar}\ \dd x \notag\\
&=\int_{\mathbb{R}^{3}}J_{\hbar}\cdot(u\times B_{\hbar})\ \dd x+\int_{\mathbb{R}^{3}}\partial_{t}A_{\hbar}\cdot u\rho_{\hbar}\ \dd x. \label{T3formula}
\end{align}
The term $T_{4}$ is recast as  
\begin{align}
T_{4}=-\int_{\mathbb{R}^{3}}\rho_{\hbar}(\partial_{t}A_{\hbar}+E_{\hbar})\cdot u\ \dd x. \label{T4formula}   
\end{align}
By Lemma \ref{Conservation of energy} (specifically equation \eqref{time der of quantum nonmodulated kinetic part}) we have  
\begin{align}
I_{1}=\int_{\mathbb{R}^{3}}J_{\hbar}\cdot E_{\hbar}\ \dd x. \label{I1 formula}
\end{align}
In view of \eqref{T1formula}-\eqref{I1 formula} we get 
\begin{align}
I_{1}+T_{1}+T_{3}+T_{4}
&= \int_{\mathbb{R}^{3}}J_{\hbar}\cdot E_{\hbar}\ \dd x+\int_{\mathbb{R}^{3}}J_{\hbar}\cdot (u\times B_{\hbar})\ \dd x-\int_{\mathbb{R}^{3}}\rho_{\hbar}u\cdot E_{\hbar}\ \dd x \notag\\
&+\int_{\mathbb{R}^{3}}(\rho_{\hbar}u-J_{\hbar})\cdot E\ \dd x-\int_{\mathbb{R}^{3}}J_{\hbar}\cdot u\times B\ \dd x \notag\\
&=\int_{\mathbb{R}^{3}}J_{\hbar}\cdot E_{\hbar}\ \dd x+\int_{\mathbb{R}^{3}}J_{\hbar}\cdot u\times (B_{\hbar}-B)\ \dd x \notag\\
&-\int_{\mathbb{R}^{3}}\rho_{\hbar}u\cdot E_{\hbar}\ \dd x+\int_{\mathbb{R}^{3}}(\rho_{\hbar}u-J_{\hbar})\cdot E\ \dd x \notag\\
&=\int_{\mathbb{R}^{3}}J_{\hbar}\cdot E_{\hbar}\ \dd x-\int_{\mathbb{R}^{3}}\rho_{\hbar}u\cdot E_{\hbar}\ \dd x+\int_{\mathbb{R}^{3}}(\rho_{\hbar}u-J_{\hbar})\cdot E\ \dd x \notag\\
&+\int_{\mathbb{R}^{3}}(J_{\hbar}-\rho_{\hbar}u)\cdot u\times (B_{\hbar}-B)\ \dd x \notag\\
&=\int_{\mathbb{R}^{3}}(J_{\hbar}-\rho_{\hbar}u)\cdot (E_{\hbar}-E)\ \dd x+\int_{\mathbb{R}^{3}}(J_{\hbar}-\rho_{\hbar}u)\cdot u\times (B_{\hbar}-B)\ \dd x.  \label{I1T1T3T4}
\end{align}
The combination of \eqref{I1T1T3T4} and \eqref{T2 formula} leads to the announced formula \eqref{time der of U}. \\
\textbf{Calculation of $\frac{\dd}{\dd t}\mathcal{E}_{\hbar}(t)$+$\frac{\dd}{\dd t}\mathcal{B}_{\hbar}(t)$.} We claim to have the formula 
\begin{align}
\frac{\dd}{\dd t}\mathcal{E}_{\hbar}(t)+\frac{\dd}{\dd t}\mathcal{B}_{\hbar}(t)=-\int_{\mathbb{R}^{3}}(E_{\hbar}-E)\cdot (J_{\hbar}-\rho u)\ \dd x. \label{time der of Maxwell part}    
\end{align}
Thanks to \eqref{time der of Eh}, we compute that 
\begin{align*}
\frac{\dd}{\dd t}\frac{1}{2}\int_{\mathbb{R}^{3}}\left\vert E_{\hbar}-E\right\vert^{2}\ \dd x&=\int_{\mathbb{R}^{3}} (\partial_{t}E_{\hbar}-\partial_{t}E)\cdot (E_{\hbar}-E)\ \dd x\\
&=\int_{\mathbb{R}^{3}}(-(\Delta A_{\hbar}+J_{\hbar})-(\mathrm{curl}(B)-\rho u))\cdot (E_{\hbar}-E)\ \dd x\\
&=-\int_{\mathbb{R}^{3}}J_{\hbar}\cdot (E_{\hbar}-E)\ \dd x-\int_{\mathbb{R}^{3}}(\Delta A_{\hbar}+\mathrm{curl}(B)-\rho u)\cdot (E_{\hbar}-E)\ \dd x.
\end{align*}
In addition, we have
\begin{align*}
\frac{\dd }{\dd t} \frac{1}{2}\int_{\mathbb{R}^{3}}\left\vert B_{\hbar}-B\right\vert^{2}\ \dd x&=\int_{\mathbb{R}^{3}}(\partial_{t}B_{\hbar}-\partial_{t}B)\cdot (B_{\hbar}-B)\ \dd x\\
&=\int_{\mathbb{R}^{3}}(\partial_{t}\mathrm{curl}(A_{\hbar})+\mathrm{curl}(E))\cdot (B_{\hbar}-B)\ \dd x\\
&=\int_{\mathbb{R}^{3}}(\partial_{t}A_{\hbar}+E)\cdot \mathrm{curl}(B_{\hbar}-B)\ \dd x\\
&=\int_{\mathbb{R}^{3}}(E-E_{\hbar}-\nabla \varphi_{\hbar})\cdot \mathrm{curl}(B_{\hbar}-B)\ \dd x\\
&=\int_{\mathbb{R}^{3}}(E_{\hbar}-E)\cdot \mathrm{curl}(B)\ \dd x+\int_{\mathbb{R}^{3}}\nabla \varphi_{\hbar}\cdot \mathrm{curl} B\ \dd x\\
&+\int_{\mathbb{R}^{3}}(E-E_{\hbar}-\nabla \varphi_{\hbar})\cdot \mathrm{curl}B_{\hbar}\ \dd x. 
\end{align*}
Therefore it follows that 
\begin{align*}
\frac{\dd}{\dd t}\mathcal{E}_{\hbar}(t)+\frac{\dd}{\dd t}\mathcal{B}_{\hbar}(t)&=-\int_{\mathbb{R}^{3}}J_{\hbar}\cdot (E_{\hbar}-E)\ \dd x-\int_{\mathbb{R}^{3}}(\Delta A_{\hbar}-\rho u)
\cdot (E_{\hbar}-E)\ \dd x\\
&+\int_{\mathbb{R}^{3}}\nabla \varphi_{\hbar}\cdot \mathrm{curl}B\ \dd x+\int_{\mathbb{R}^{3}}(E-E_{\hbar}-\nabla \varphi_{\hbar})\cdot \mathrm{curl}(B_{\hbar})\ \dd x. 
\end{align*}
Since $\mathrm{curl}(\nabla \varphi_{\hbar})=0$, integration by parts yields $\int_{\mathbb{R}^{3}}\nabla \varphi_{\hbar}\cdot \mathrm{curl} B\ \dd x=0$ so that we get 
\begin{align*}
\frac{\dd}{\dd t}\mathcal{E}_{\hbar}(t)+\frac{\dd}{\dd t}\mathcal{B}_{\hbar}(t)=-\int_{\mathbb{R}^{3}}(E_{\hbar}-E)\cdot (J_{\hbar}+\Delta A_{\hbar}-\rho u+\mathrm{curl}(B_{\hbar}))\ \dd x.    
\end{align*}
Considering that $\mathrm{curl}(B_{\hbar})=\mathrm{curl}^{2}A_{\hbar}=-\Delta A_{\hbar}$ we deduce that 
\begin{align*}
\frac{\dd}{\dd t}\mathcal{E}_{\hbar}(t)+\frac{\dd}{\dd t}\mathcal{B}_{\hbar}(t)=-\int_{\mathbb{R}^{3}} (E_{\hbar}-E)\cdot (J_{\hbar}-\rho u)\ \dd x,     
\end{align*}
which establishes \eqref{time der of Maxwell part}. \\
\textbf{Conclusion.} In view of \eqref{time der of U} and \eqref{time der of Maxwell part} we get 
\begin{align*}
 \frac{\dd}{\dd t}\mathcal{H}_{\hbar}(t)&=-\frac{1}{4}\sum_{1\leq j,k\leq 3}\mathrm{tr}\left((i\hbar\partial_{x_{j}}+A^{j}_{\hbar}+u^{j})\vee \left((i\hbar\partial_{x_{k}}+A^{k}_{\hbar}+u^{k})\vee  \partial_{x_{j}}u^{k}\right) R_{\hbar}\right) \notag\\
&+\int_{\mathbb{R}^{3}}(J_{\hbar}-\rho_{\hbar}u)\cdot (E_{\hbar}-E)\ \dd x \notag\\
&+\int_{\mathbb{R}^{3}}(J_{\hbar}-\rho_{\hbar}u)\cdot (u\times (B_{\hbar}-B))\ \dd x-\int_{\mathbb{R}^{3}} (E_{\hbar}-E)\cdot (J_{\hbar}-\rho u)\ \dd x \notag\\
=&-\frac{1}{4}\sum_{1\leq j,k\leq 3}\mathrm{tr}\left(\left(i\hbar\partial_{x_{j}}+A^{j}_{\hbar}+u^{j}\right)\vee \left((i\hbar \partial_{x_{k}}+A^{k}_{\hbar}+u^{k})\vee \partial_{x_{j}}u^{k}\right)R_{\hbar}\right) \notag\\
&-\int_{\mathbb{R}^{3}}(\rho_{\hbar}-\rho)u\cdot (E_{\hbar}-E)\ \dd x+\int_{\mathbb{R}^{3}}(J_{\hbar}-\rho_{\hbar}u)\cdot (u\times (B_{\hbar}-B))\ \dd x . 
\end{align*}
\end{proof}
\textbf{Proof of Theorem \ref{main thm}.} 
\textbf{The identity \eqref{Time derivative of quantum corrected modulated energy statement}.} 
Recall the abbreviation $e_{\hbar}=E_{\hbar}-E$ and $b_{\hbar}=B_{\hbar}-B$. By Theorem \ref{time der of quantum modulated energy thm} we have  
\begin{align}
\frac{\dd}{\dd t}\mathcal{G}_{\hbar}(t)=&-\frac{1}{4}\sum_{1\leq j,k\leq 3}\mathrm{tr}\left((i\hbar\partial_{x_{j}}+A^{j}_{\hbar}+u^{j})\vee \left((i\hbar\partial_{x_{k}}+A^{k}_{\hbar}+u^{k})\vee \partial_{x_{k}}u^{j}\right)R_{\hbar}\right)\notag
\\
&-\int_{\mathbb{R}^{3}}(\rho_{\hbar}-\rho)u\cdot e_{\hbar}\ \dd x \notag\\
&+\int_{\mathbb{R}^{3}}(J_{\hbar}-\rho_{\hbar}u)\cdot (u\times b_{\hbar})\ \dd x-\int_{\mathbb{R}^{3}}\partial_{t}u\cdot (e_{\hbar}\times b_{\hbar})\ \dd x \notag\\
&-\int_{\mathbb{R}^{3}}u\cdot (\partial_{t}e_{\hbar}\times b_{\hbar}+e_{\hbar}\times \partial_{t}b_{\hbar})\ \dd x= \sum_{j=1}^{5}\mathcal{T}_{j}. 
\label{formula for time der of quantum G}
\end{align}
Using \eqref{time der of Eh} we get  the following identities:  
\begin{align*}
\partial_{t}e_{\hbar}&=-\Delta A_{\hbar}-J_{\hbar}+\rho u-\mathrm{curl}(B)
= \rho u-J_{\hbar}+\mathrm{curl}(b_{\hbar}) 
\end{align*}
and 
\begin{align*}
\partial_{t}b_{\hbar}=\mathrm{curl}(E)-\mathrm{curl}(E_{\hbar})=-\mathrm{curl}(e_{\hbar}).     
\end{align*}
Therefore, it follows that  
\begin{align}
\mathcal{T}_{5}=
\int_{\mathbb{R}^{3}} u\cdot (e_{\hbar}\times \mathrm{curl}(e_{\hbar}))\ \dd x
-\int_{\mathbb{R}^{3}} u\cdot (\mathrm{curl}(b_{\hbar})\times b_{\hbar})\ \dd x
-\int_{\mathbb{R}^{3}} u\cdot ((\rho u-J_{\hbar})\times b_{\hbar})\ \dd x. \label{wedge quantum product term}
\end{align}
Recall that for a vector field $v:\mathbb{R}^{3}\rightarrow \mathbb{R}^{3}$ there holds the identity 
\begin{align*}
\mathrm{curl}(v)\times v=\mathrm{div}\left(v
^{\otimes 2} -\frac{1}{2}\left\vert v\right\vert^{2} I\right)-(\mathrm{div}v)v.   
\end{align*}
Therefore, we find that 
\begin{align}
\int_{\mathbb{R}^{3}} u\cdot (e_{\hbar}\times \mathrm{curl}(e_{\hbar}))\ \dd x&=-\int_{\mathbb{R}^{3}} u\cdot \mathrm{div}\left(e^{\otimes 2}_{\hbar}-\frac{1}{2}\left\vert e_{\hbar}\right\vert^{2}I \right)\ \dd x+\int_{\mathbb{R}^{3}}u\cdot e_{\hbar}\mathrm{div}e_{\hbar}\ \dd x \notag\\
&=\int_{\mathbb{R}^{3}} D_{x}u:\left(e^{\otimes 2}_{\hbar} -\frac{1}{2}\left\vert e_{\hbar}\right\vert^{2}I \right)\ \dd x+\int_{\mathbb{R}^{3}}u\cdot e_{\hbar}\mathrm{div}e_{\hbar}\ \dd x  
\label{eh times curl eh}
\end{align}
and that 
\begin{align}
-\int_{\mathbb{R}^{3}} u\cdot (\mathrm{curl}(b_{\hbar})\times b_{\hbar})\ \dd x&=-\int_{\mathbb{R}^{3}} u\cdot \mathrm{div}\left(b^{\otimes 2}_{\hbar}-\frac{1}{2}\left\vert b_{\hbar}\right\vert^{2}I \right)\ \dd x+\underset{=0}{\underbrace{\int_{\mathbb{R}^{3}}u\cdot b_{\hbar}\mathrm{div}b_{\hbar}\ \dd x}} \notag\\
&=\int_{\mathbb{R}^{3}} D_{x}u:\left(b^{\otimes 2}_{\hbar}-\frac{1}{2}\left\vert b_{\hbar}\right\vert^{2} I\right)\ \dd x. \label{curl bh times bh} \end{align}
Substituting \eqref{eh times curl eh}-\eqref{curl bh times bh} inside \eqref{wedge quantum product term} we get 
\begin{align*}
\mathcal{T}_{5}&=\int_{\mathbb{R}^{3}}D_{x}u:\left(e^{\otimes 2}_{\hbar}-\frac{1}{2}\left\vert e_{\hbar}\right\vert^{2}I+b^{\otimes 2}_{\hbar}-\frac{1}{2}\left\vert b_{\hbar}\right\vert^{2}I  \right)\ \dd x\\
&-\int_{\mathbb{R}^{3}}u\cdot \left((\rho u -J_{\hbar})\times b_{\hbar}\right)\ \dd x
+\int_{\mathbb{R}^{3}}u\cdot e_{\hbar}\mathrm{div}e_{\hbar}\ \dd x. 
\end{align*}
Furthermore, using that $J\cdot (u\times b)=-u\cdot (J\times b)$ we see that  
\begin{align*}
\mathcal{T}_{3}=\int_{\mathbb{R}^{3}}J_{\hbar}\cdot (u\times b_{\hbar})\ \dd x=-\int_{\mathbb{R}^{3}}u\cdot ( J_{\hbar}\times b_{\hbar})\ \dd x . 
\end{align*}
Consequently, it follows that 
\begin{align}
\mathcal{T}_{3}+\mathcal{T}_{5}&=\int_{\mathbb{R}^{3}}D_{x}u:\left(e_{\hbar}^{\otimes 2}-\frac{1}{2}\left\vert e_{\hbar}\right\vert^{2}I+b_{\hbar}^{\otimes 2}-\frac{1}{2}\left\vert b_{\hbar}\right\vert^{2}I\right)\ \dd x \notag\\
&\underset{=0}{\underbrace{-\int_{\mathbb{R}^{3}}u\cdot ( J_{\hbar}\times b_{\hbar})\ \dd x +\int_{\mathbb{R}^{3}}u\cdot (J_{\hbar}\times b_{\hbar})\ \dd x}}+\int_{\mathbb{R}^{3}}u\cdot e_{\hbar}\mathrm{div}e_{\hbar}\ \dd x \notag\\
&=\int_{\mathbb{R}^{3}}D_{x}u:\left(e_{\hbar}^{\otimes 2}-\frac{1}{2}\left\vert e_{\hbar}\right\vert^{2}I+b_{\hbar}^{\otimes 2}-\frac{1}{2}\left\vert b_{\hbar}\right\vert^{2}I\right)\ \dd x+\int_{\mathbb{R}^{3}}u\cdot e_{\hbar}\mathrm{div}e_{\hbar}\ \dd x. \label{sum of quantum T3T5}     
\end{align}
Finally, using that $\mathrm{div}E=\rho$ and that $\mathrm{div}E_{\hbar}=\rho_{\hbar}$ we see that 
\begin{align}
\mathcal{T}_{2}&=-\int_{\mathbb{R}^{3}}(\rho_{\hbar}-\rho)u\cdot e_{\hbar}\ \dd x=-\int_{\mathbb{R}^{3}}u\cdot e_{\hbar}\mathrm{div}e_{\hbar}\ \dd x  \label{quantum T2 identity} 
\end{align}
and hence 
\begin{align}
\mathcal{T}_{2}+\mathcal{T}_{3}+\mathcal{T}_{5}=\int_{\mathbb{R}^{3}}D_{x}u:\left(e_{\hbar}^{\otimes 2}-\frac{1}{2}\left\vert e_{\hbar}\right\vert^{2}I+b_{\hbar}^{\otimes 2}-\frac{1}{2}\left\vert b_{\hbar}\right\vert^{2}I\right)\ \dd x.  \label{T2T3T5}   
\end{align}
Substituting \eqref{T2T3T5} inside 
\eqref{formula for time der of quantum G} we deduce that 
\begin{align}
\frac{\dd}{\dd t}\mathcal{G}_{\hbar}(t)&=-\frac{1}{4}\sum_{1\leq j,k\leq 3}\mathrm{tr}\left((i\hbar\partial_{x_{j}}+A^{j}_{\hbar}+u^{j})\vee \left((i\hbar\partial_{x_{k}}+A^{k}_{\hbar}+u^{k})\vee \partial_{x_{k}}u^{j}\right)R_{\hbar}\right) \notag\\
&+\int_{\mathbb{R}^{3}}D_{x}u:\left(e_{\hbar}^{\otimes 2}-\frac{1}{2}\left\vert e_{\hbar}\right\vert^{2}I+b_{\hbar}^{\otimes 2}-\frac{1}{2}\left\vert b_{\hbar}\right\vert^{2}I\right)\ \dd x \notag
\\
&-\int_{\mathbb{R}^{3}}\partial_{t}u\cdot (e_{\hbar}\times b_{\hbar})\ \dd x\coloneqq \sum_{k=1}^{3}I_{k},\label{right hand side of time der of Gh} 
\end{align}
which is the asserted formula \eqref{Time derivative of quantum corrected modulated energy statement}. 
\\
\textbf{Gr\"onwall estimate on $\mathcal{H}_{\hbar}(t)$.} We start by estimating $I_{1}$.  Set $P^{j}\coloneqq \Pi^{j}+u^{j}$. We have 
\begin{align}
P^{j}\vee (P
^{k}\vee \partial_{x_{k}}u^{j})&=P^{j}\vee (P^{k}\partial_{x_{k}}u^{j}+\partial_{x_{k}}u^{j}P^{k}) \notag\\
&=P^{j}P^{k}\partial_{x_{k}}u^{j}+P^{j}\partial_{x_{k}}u^{j}P^{k}+P^{k}\partial_{x_{k}}u^{j}P^{j}+\partial_{x_{k}}u^{j}P^{k}P^{j}. \label{formula for anticommutator of Pj}  \end{align}
Therefore we have 
\begin{align*}
I_{1}=&-\frac{1}{4}\sum_{1\leq j,k\leq 3}\langle \psi_{\hbar}\vert P^{j}P^{k}\partial_{x_{k}}u^{j}\vert \psi_{\hbar}\rangle -\frac{1}{4}\sum_{1\leq j,k\leq 3}\langle \psi_{\hbar} \vert  P^{j}(\partial_{x_{k}}u^{j}+\partial_{x_{j}}u^{k})P^{k} \vert \psi_{\hbar}\rangle\\
&-\frac{1}{4}\sum_{1\leq j,k\leq 3}\langle \psi_{\hbar} \vert  \partial_{x_{k}}u^{j}P^{k}P^{j}\vert  \psi_{\hbar}\rangle \coloneqq I_{1}^{1}+I_{1}^{2}+I_{1}^{3}.        
\end{align*}
To estimate $I_{1}^{1}$, we apply the self-adjointness of $P^{j}$ to find
\begin{align*}
\sum_{k=1}^{3}\langle \psi_{\hbar}\vert   P^{j}P^{k}\partial_{x_{k}}u^{j}\vert\psi_{\hbar}\rangle  &=\sum_{k=1}^{3}\langle P^{j}\psi_{\hbar}\vert   P^{k}\partial_{x_{k}}u^{j}\vert \psi_{\hbar} \rangle\\
&=\sum_{k=1}^{3} (\langle P^{j}\psi_{\hbar}  \vert  i\hbar \partial_{x_{k}x_{k}}u^{j}\vert \psi_{\hbar}\rangle +\langle P^{j}\psi_{\hbar}\vert  \partial_{x_{k}}u^{j}P^{k}\vert \psi_{\hbar}\rangle)\\
&=\langle P^{j}\psi_{\hbar}\vert  i\hbar \Delta u^{j}\vert \psi_{\hbar}\rangle+\sum_{k=1}^{3}\langle P^{j}\psi_{\hbar}  \vert \partial_{x_{k}}u^{j}P^{k}\vert \psi_{\hbar} \rangle.       
\end{align*}
Observe that
\begin{align*}
\left\vert \langle P^{j}\psi_{\hbar}\vert  \partial_{x_{k}}u^{j}P^{k}\vert \psi_{\hbar}\rangle \right\vert  \leq \frac{1}{2}\left\Vert D_{x}u\right\Vert_{\infty}\left(\left\Vert P^{j}\psi_{\hbar}(t,\cdot)\right\Vert_{2}^{2} +\left\Vert P^{k}\psi_{\hbar}(t,\cdot)\right\Vert_{2}^{2} \right).     
\end{align*}
Furthermore, since the $L^{2}$ norm of $\psi_{\hbar}$ is conserved ($\left\Vert \psi_{\hbar}(t,\cdot)\right\Vert_{2}=\left\Vert \psi_{\hbar}^{\mathrm{in}}\right\Vert_{2}=1$) we have 
\begin{align*}
\left\vert \langle P^{j}\psi_{\hbar} \vert i\hbar \Delta_{x}u^{j}\vert \psi_{\hbar}\rangle\right\vert  \leq \frac{1}{2}\left\Vert P^{j}\psi_{\hbar}(t,\cdot) \right\Vert_{2}^{2}+\frac{\hbar^{2}}{2}\left\Vert  \Delta_{x}u^{j}\psi_{\hbar}(t,\cdot)\right\Vert_{2}^{2}\leq \mathcal{U}_{\hbar}(t)+\frac{\hbar^{2}}{2}\left\Vert \Delta_{x} u\right\Vert_{\infty}^{2}.     
\end{align*}
Therefore, we obtain 
\begin{align}
\left\vert I_{1}^{1}\right\vert &\leq 3\left\Vert D_{x}u\right\Vert_{\infty}\mathcal{U}_{\hbar}(t)+9\mathcal{U}_{\hbar}(t)+9\hbar^{2} \left\Vert \Delta_{x}u\right\Vert_{\infty}^{2} \notag\\
&= (3\left\Vert D_{x}u\right\Vert_{\infty} +9)\mathcal{U}_{\hbar}(t)+9\hbar^{2}\left\Vert \Delta_{x}u \right\Vert_{\infty}^{2}.  \label{Est D1}    
\end{align}
To estimate $I_{1}^{2}$, we have 
\begin{align*}
\left\vert \langle \psi_{\hbar}\vert   P^{j}(\partial_{x_{k}}u^{j}+\partial_{x_{j}}u^{k})P^{k}\vert \psi_{\hbar}\rangle\right\vert &= \left\vert \langle P^{j}\psi_{\hbar}\vert  (\partial_{x_{k}}u^{j}+\partial_{x_{j}}u^{k})P^{k}\vert\psi_{\hbar}\rangle\right\vert \\
&\leq 2\left\Vert D_{x}u\right\Vert_{\infty} \left(\frac{1}{2}\left\Vert P^{j}\psi_{\hbar}(t,\cdot)\right\Vert_{2}^{2}+\frac{1}{2}\left\Vert P^{k}\psi_{\hbar}(t,\cdot)\right\Vert_{2}^{2}\right).      
\end{align*}
Therefore it follows that 
\begin{align}
\left\vert I_{1}^{2}\right\vert \leq 6\left\Vert D_{x}u\right\Vert_{\infty}\mathcal{U}_{\hbar}(t). \label{Est D2}     
\end{align}
To estimate $I_{1}^{3}$ we first note that 
\begin{align*}
&\sum_{k=1}^{3}\langle \psi_{\hbar} \vert  \partial_{x_{k}}u_{j}P^{k}P^{j}\vert \psi_{\hbar}\rangle=\sum_{k=1}^{3}\langle P^{k}\partial_{x_{k}}u^{j}\psi_{\hbar}\vert  P^{j}\vert \psi_{\hbar} \rangle\\
&=\sum_{k=1}^{3}\langle \partial_{x_{k}}u^{j}P^{k}\psi_{\hbar}\vert  P^{j}\vert \psi_{\hbar} \rangle  +\sum_{k=1}^{3}\langle i\hbar \partial_{x_{k}x_{k}}u^{j}\psi_{\hbar}\vert  P^{j}\vert \psi_{\hbar}\rangle \\
&= \sum_{k=1}^{3}\langle \partial_{x_{k}}u^{j}P^{k}\psi_{\hbar}\vert P^{j}\vert \psi_{\hbar}\rangle+\langle i\hbar \Delta u^{j}\psi_{\hbar} \vert P
^{j}\vert \psi_{\hbar}\rangle.    
\end{align*}
Therefore it follows that 
\begin{align}
\left\vert I_{1}^{3}\right\vert&\leq \frac{\left\Vert D_{x}u\right\Vert_{\infty}}{2}\sum_{1\leq j,k\leq 3} \left(\left\Vert P^{k}\psi_{\hbar}(t,\cdot)\right\Vert_{2}^{2} \notag 
+\left\Vert P^{j}\psi_{\hbar}(t,\cdot)\right\Vert_{2}^{2}\right)\\
&+\frac{\hbar^{2}}{2}\sum_{j=1}^{3}\left\Vert \Delta u^{j}\psi_{\hbar}(t,\cdot)\right\Vert_{2}^{2}+\frac{1}{2}\sum_{j=1}^{3}\left\Vert P^{j}\psi_{\hbar}\right\Vert_{2}^{2} \notag\\
&\leq 3(\left\Vert D_{x}u\right\Vert_{\infty}+\frac{1}{2})\mathcal{U}_{\hbar}(t)+\frac{9\hbar^{2}}{2}\left\Vert \Delta u\right\Vert_{\infty}^{2} . \label{EST D3}      
\end{align}
Gathering \eqref{Est D1}-\eqref{EST D3} we conclude that 
\begin{align}
I_{1}\leq c_{1}\mathcal{U}_{\hbar}(t)+c_{2}\hbar^{2},    \label{final est D1} 
\end{align}
where $c_{1}=c_{1}(\left\Vert D_{x}u\right\Vert_{\infty})$ and $c_{2}=c_{2}(\left\Vert \Delta_{x}u\right\Vert_{\infty})$. 
To estimate $I_{2}$ we note that 
\begin{align*}
\left\vert I_{2} \right\vert\leq& \left\vert \int_{\mathbb{R}^{3}}D_{x}u:e_{\hbar}^{\otimes 2}\ \dd x\right\vert+\frac{1}{2}\left\vert \int_{\mathbb{R}^{3}}D_{x}u: \left\vert e_{\hbar}\ \right\vert^{2}I\ \dd x \right\vert \\
&+\left\vert \int_{\mathbb{R}^{3}}D_{x}u:b_{\hbar}^{\otimes 2}\ \dd x\right\vert+\frac{1}{2}\left\vert \int_{\mathbb{R}^{3}}D_{x}u:\left\vert b_{\hbar}\right\vert^{2}I \ \dd x\right\vert \\
&\leq \left(\left\Vert D_{x}u\right\Vert_{\infty}+\frac{1}{2}\left\Vert \mathrm{div}u\right\Vert_{\infty}\right)\int_{\mathbb{R}^{3}}\left\vert e_{\hbar}\right\vert^{2}\ \dd x+\left(\left\Vert D_{x}u\right\Vert_{\infty}+\frac{1}{2}\left\Vert \mathrm{div}u\right\Vert_{\infty}  \right)\int_{\mathbb{R}^{3}}\left\vert b_{\hbar}\right\vert^{2}\ \dd x\\  &\leq (2\left\Vert D_{x}u\right\Vert_{\infty}+\left\Vert \mathrm{div}u\right\Vert_{\infty})(\mathcal{E}_{\hbar}(t)+\mathcal{B}_{\hbar}(t)).        
\end{align*}
As for $I_{3}$, we have 
\begin{align*}
\left\vert I_{3} \right\vert\leq \frac{1}{2}\left\Vert \partial_{t}u\right\Vert_{\infty}\int_{\mathbb{R}^{3}}(\left\vert e_{\hbar}\right\vert^{2}+\left\vert b_{\hbar}\right\vert^{2}) \ \dd x\leq \left\Vert \partial_{t}u\right\Vert_{\infty}\left(\mathcal{E}_{\hbar}(t)+\mathcal{B}_{\hbar}(t)\right).         
\end{align*}
Consequently, it follows that 
\begin{align*}
\frac{\dd}{\dd t}\mathcal{G}_{\hbar}(t)\leq C(\mathcal{H}_{\hbar}(t)+\hbar^{2})    
\end{align*}
for some $C=C(\left\Vert u\right\Vert_{W^{1,\infty}_{t,x}([0,T]\times \mathbb{R}^{3})},\left\Vert u\right\Vert_{L^{\infty}([0,T];W^{2,\infty}(\mathbb{R}^{3}))})$. 
Therefore integrating in time and owing to Lemma \ref{sandwich lemma} we conclude that 
\begin{align*}
(1-\left\Vert u\right\Vert_{\infty})\mathcal{H}_{\hbar}(t)&\leq \mathcal{G}_{\hbar}(t)\leq \mathcal{G}_{\hbar}(0)+C\left(\int_{0}^{t}\mathcal{H}_{\hbar}(\tau)\ \dd \tau+\hbar^{2}t\right)\\
&\leq (1+\left\Vert u^{\mathrm{in}}\right\Vert_{\infty})\mathcal{H}_{\hbar}(0) +C\hbar^{2}t+C\int_{0}^{t}\mathcal{H}_{\hbar}(\tau)\ \dd \tau   
\end{align*}
or equivalently 
\begin{align*}
\mathcal{H}_{\hbar}(t)\leq \frac{1+\left\Vert u^{\mathrm{in}}\right\Vert_{\infty}}{1-\left\Vert u\right\Vert_{\infty}}\mathcal{H}_{\hbar}(0)+\frac{C\hbar^{2}t}{1-\left\Vert u\right\Vert_{\infty}}+\frac{C}{1-\left\Vert u\right\Vert_{\infty}}\int_{0}^{t}\mathcal{H}_{\hbar}(\tau)\ \dd \tau.   \end{align*}
By Gr\"onwall's lemma we deduce 
\begin{align*}
\underset{t\in [0,T]}{\sup}\mathcal{H   }_{\hbar}(t)\leq Ce^{CT}(\mathcal{H}_{\hbar}(0)+\hbar^{2}T),    \end{align*}
which proves \eqref{vanishing of quantum modulated energy statement}. \\
\textbf{The convergence \eqref{convergence of E and B}-\eqref{convergence of rho and J}.} That 
\begin{align}
\underset{t\in [0,T]}{\sup}(\left\Vert (B_{\hbar}-B)(t,\cdot)\right\Vert_{2} +\left\Vert (E_{\hbar}-E)(t,\cdot)\right\Vert_{2})\underset{\hbar\rightarrow 0}{\rightarrow} 0 \label{convergence of Bh and Eh}    
\end{align}
follows immediately from \eqref{vanishing of quantum modulated energy statement}. Since $E_{\hbar}=-\partial_{t}A_{\hbar}-\nabla \varphi_{\hbar}$ and $\mathrm{div}A_{\hbar}=0$, taking the divergence we obtain 
\begin{align*}
\mathrm{div}E_{\hbar}=-\Delta\varphi_{\hbar}=\rho_{\hbar}.    
\end{align*}
Therefore, it follows that 
\begin{align*}
\left\Vert (\rho_{\hbar}-\rho)(t,\cdot)\right\Vert_{H^{-1}}^{2}&=\int_{\mathbb{R}^{3}}\frac{\left\vert \mathcal{F}(\mathrm{div}(E_{\hbar}-E))\right\vert^{2}(\xi)}{1+\left\vert \xi\right\vert^{2} }\ \dd \xi\\
&\leq\int_{\mathbb{R}^{3}}\frac{\left\vert \xi\right\vert^{2} }{1+\left\vert \xi\right\vert^{2}}\left\vert \mathcal{F}(E_{\hbar}-E)\right\vert^{2}(\xi)\ \dd \xi\leq \left\Vert (E_{\hbar}-E)(t,\cdot)\right\Vert_{2}^{2}      
\end{align*}
and hence in view \eqref{convergence of Bh and Eh} it follows that  
\begin{align}
\underset{t\in [0,T]}{\sup}\left\Vert (\rho_{\hbar}-\rho)(t,\cdot)\right\Vert_{H^{-1}}\underset{\hbar\rightarrow 0}{\rightarrow} 0. \label{convergence of rhohbar}     \end{align}
In addition, given a test vector field $\Psi\in W^{1,\infty}(\mathbb{R}^{3};\mathbb{R}^{3})$ with $\left\Vert \Psi\right\Vert_{W^{1,\infty}}\leq 1$ we have 
\begin{align*}
\int_{\mathbb{R}^{3}}(J_{\hbar}-\rho u)\cdot \Psi\ \dd x&=\int_{\mathbb{R}^{3}}(J_{\hbar}-\rho_{\hbar}u)\cdot \Psi\ \dd x+\int_{\mathbb{R}^{3}}(\rho_{\hbar}-\rho)u\cdot \Psi\ \dd x\\
&=-\frac{1}{2}\mathrm{tr}\left((i\hbar\nabla+A_{\hbar}+u)\vee \Psi R_{\hbar}\right)+\int_{\mathbb{R}^{3}}(\rho_{\hbar}-\rho) u\cdot \Psi\ \dd x
\\
&\leq  \sqrt{2\mathcal{U}_{\hbar}(t)}+\left\Vert (u\cdot \Psi)(t,\cdot)\right\Vert_{H^{1}} \left\Vert (\rho_{\hbar}-\rho)(t,\cdot)\right\Vert_{H^{-1}}.      
\end{align*}
Due to the convergence \eqref{convergence of rhohbar} and using that $\underset{t\in [0,T]}{\sup}\mathcal{U}_{\hbar}(t)\underset{\hbar\rightarrow 0}{\rightarrow} 0$ we find  $$\underset{t\in [0,T]}{\sup}\left\Vert (J_{\hbar}-\rho u)(t,\cdot)\right\Vert_{\mathrm{BL}} \underset{\hbar\rightarrow 0}{\rightarrow} 0. $$ 
\qed

\vspace{0.4 cm}
\begin{rem}
One may construct  admissible initial data, that is initial data such that $\mathcal{H}_{\hbar}(0)\underset{\hbar \rightarrow 0}{\rightarrow}0$. In more detail, for given $(\rho^{\mathrm{in}},u^{\mathrm{in}},E^{\mathrm{in}},B^{\mathrm{in}})$ satisfying \eqref{EMI} we may find  $(\psi_{\hbar}^{\mathrm{in}},A^{\mathrm{in}}_{0,\hbar},A^{\mathrm{in}}_{1,\hbar})$ satisfying \eqref{MS initial data} such that 
\begin{align*}
\mathcal{U}_{\hbar}(0)=\frac{1}{2}\int_{\mathbb{R}^{3}}\left\vert (i\hbar \nabla +A^{\mathrm{in}}_{0,\hbar}+u^{\mathrm{in}})\psi_{\hbar}^{\mathrm{in}}\right\vert^{2}\ \dd x\underset{\hbar \rightarrow 0}{\rightarrow}0    
\end{align*}
and 
\begin{align*}
\mathcal{E}_{\hbar}(0)=\frac{1}{2}\int_{\mathbb{R}^{3}}\left\vert \nabla\Delta^{-1}\left\vert \psi_{\hbar}^{\mathrm{in}}\right\vert^{2}-A_{1,\hbar}^{\mathrm{in}}-E^{\mathrm{in}} \right\vert^{2}\ \dd x\underset{\hbar \rightarrow 0}{\rightarrow} 0       
\end{align*}
and 
\begin{align*}
\mathcal{B}_{\hbar}(0)=\frac{1}{2}\int_{\mathbb{R}^{3}}\left\vert \mathrm{curl}A^{\mathrm{in}}_{0,\hbar}-B^{\mathrm{in}}\right\vert^{2}\ \dd x\underset{\hbar\rightarrow 0}{\rightarrow}0.    
\end{align*}
We further assume that $B^{\mathrm{in}}=\mathrm{curl}\widetilde{A_{0}}$ for some $\widetilde{A_{0}}\in H^{5}(\mathbb{R}^{3})$ with $\mathrm{div}\widetilde{A_{0}}=0$.  We may then choose $A^{\mathrm{in}}_{0,\hbar}\equiv \widetilde{A_{0}}$ so that $\mathcal{B}_{\hbar}(0)\equiv 0$. Moreover assume there exist some $\widetilde{A_{1}}\in H^{4}(\mathbb{R}^{3})$ with $\mathrm{div}\widetilde{A_{1}}=0$ such that $E^{\mathrm{in}}=\nabla \Delta^{-1}\rho^{\mathrm{in}}-\widetilde{A_{1}}$. So choosing $A_{1,\hbar}^{\mathrm{in}}=\widetilde{A_{1}}$ we get 
\begin{align*}
\mathcal{E}_{\hbar}(0)=\frac{1}{2}\int_{\mathbb{R}^{3}}\left\vert \nabla \Delta^{-1}(\left\vert\psi^{\mathrm{in}}_{\hbar}\right\vert^
{2}- \rho^{\mathrm{in}})\right\vert^{2}\ \dd x.      
\end{align*}
According to Proposition 6.1 in \cite{porat2026magnetized}, given $\rho^{\mathrm{in}}\in H^{3}(\mathbb{R}^{3})\cap \mathcal{P}(\mathbb{R}^{3})$ and $u
^{\mathrm{in}}\in H^{4}(\mathbb{R}^{3}), A^{\mathrm{in}}_{0}\in H^{4}(\mathbb{R}^{3})$ there exists a family of wave functions $\{\psi_{\hbar}^{\mathrm{in}}\}_{\hbar>0}$ such that 
\begin{align*}
\mathcal{H}_{\hbar}(0)=\mathcal{U}_{\hbar}(0)+\mathcal{E}_{\hbar}(0)  \underset{\hbar \rightarrow 0}{\rightarrow} 0.   
\end{align*}
Thus $(\psi_{\hbar}^{\mathrm{in}}, \widetilde{A_{0}},\widetilde{A_{1}})$ is the requested initial data. 
\end{rem}
\noindent{\bf Acknowledgments.}
This work was prepared during the author's stay at the University of Texas at Austin. The author is indebted to the University of Texas at Austin for financial support.

\vspace{0.4 cm}

\noindent{\bf Conflict of Interest:} The author declares that he has no conflict of interest.

\bigskip
\noindent{\bf Data availability:} Data sharing is not applicable to this article as no datasets were generated or analyzed during the current study.

\bigskip

\noindent{\bf Declaration on the Use of Generative AI.} The author acknowledges the use of ChatGPT as an auxiliary tool in exploring mathematical ideas and checking certain intermediate arguments and computations. All AI-generated suggestions were independently assessed and verified by the author. The mathematical results, proofs, organization, and conclusions of the paper are the author’s own, and the manuscript was written in its entirety by the author, who assumes full responsibility for its content.
\bibliographystyle{abbrv}
\bibliography{references}
\end{document}